\newcommand{\R}{\mathbb{R}}
\newcommand{\Z}{\mathbb{Z}}
\DeclareMathOperator{\Aff}{Aff}
\DeclareMathOperator{\dom}{dom}
\DeclareMathOperator{\Diff}{Diff}
\DeclareMathOperator{\Eff}{Eff}
\DeclareMathOperator{\germ}{germ}
\DeclareMathOperator{\GL}{GL}
\DeclareMathOperator{\id}{id}
\DeclareMathOperator{\loc}{loc}
\DeclareMathOperator{\pr}{pr}
\newcommand{\ol}[1]{\overline{#1}}
\newcommand{\cl}[1]{\mathcal{#1}}
\newcommand{\sr}[1]{\mathscr{#1}}
\newcommand{\fk}[1]{\mathfrak{#1}}
\newcommand{\mbf}[1]{\mathbf{#1}}
\newcommand{\msf}[1]{\mathsf{#1}}
\newcommand{\ti}[1]{\tilde{#1}}
\newcommand{\rra}{\rightrightarrows}
\newcommand{\xar}[1]{\xrightarrow[]{#1}}
\newcommand{\diffloc}{\operatorname{Diff}_{\text{loc}}}
\newcommand{\fiber}[2]{\tensor[_{#1}]{\times}{_{#2}}}
\newcommand{\diffeolqfold}{\mbf{DiffeolQfold}}
\newcommand{\qfoldgrpd}{\mbf{QfoldGrpd}}
\newcommand{\bi}{\mbf{Bi}}
\newcommand{\diffeol}{\mbf{Diffeol}}
\newcommand{\define}[1]{\emph{#1}}
\theoremstyle{plain}
\newtheorem{theorem}     [equation] {Theorem}
\newtheorem{proposition} [equation]  {Proposition}
\newtheorem{lemma}       [equation]  {Lemma}
\newtheorem{corollary}   [equation]  {Corollary}
\newtheorem*{theorem*}              {Theorem}
\newtheorem*{proposition*}          {Proposition}
\newtheorem*{corollary*}            {Corollary}
\theoremstyle{definition}
\newtheorem{definition}  [equation]  {Definition}
\newtheorem*{definition*}           {Definition}
\theoremstyle{remark}
\newtheorem{remark}      [equation]  {Remark}
\newtheorem{example}     [equation]  {Example}
\newtheorem*{remark*}               {Remark}
\newtheorem*{example*}              {Example}
\newtheorem*{question*}             {Question}
\def\eor{\unskip\ \hglue0mm\hfill$\diamond$\smallskip\goodbreak}
\def\eoe{\unskip\ \hglue0mm\hfill$\between$\smallskip\goodbreak}
\def\eod{\unskip\ \hglue0mm\hfill$\diamond$\smallskip\goodbreak}
\numberwithin{equation}{section}
\newenvironment{enumerater}{
  \begin{enumerate}[label = (\roman*)]}{
  \end{enumerate}}
\newif\ifdebug
\begin{document}
\title{Quasifold Groupoids and Diffeological Quasifolds}

\thanks{\emph{2020 Mathematics Subject classification:} Primary 22A22; Secondary 58H05}

\keywords{quasifold, orbifold, Morita equivalence, diffeology}

\author{Yael Karshon}
\address{Department of Mathematics, University of Toronto, Toronto, ON, Canada,
and School of Mathematical Sciences, Tel-Aviv University, Tel-Aviv, Israel}
\email{karshon@math.toronto.edu, yaelkarshon@tauex.tau.ac.il}

\author{David Miyamoto}
\address{Department of Mathematics, University of Toronto, Toronto, ON, Canada}
\email{david.miyamoto@mail.utoronto.ca}

\date{\today}
\maketitle

\begin{abstract}
  Quasifolds are spaces that are locally modelled by quotients of $\R^n$ by countable affine group actions. These spaces first appeared in Elisa Prato's generalization of the Delzant construction, and special cases include leaf spaces of irrational linear flows on the torus, and orbifolds. We consider the category of diffeological quasifolds, which embeds in the category of diffeological spaces, and the bicategory of quasifold groupoids, which embeds in the bicategory of Lie groupoids, bibundles, and bibundle morphisms. We prove that, restricting to those morphisms that are locally invertible, and to quasifold groupoids that are effective, the functor taking a quasifold groupoid to its diffeological orbit space is an equivalence of the underlying categories. These results complete and extend earlier work with Masrour Zoghi.
\end{abstract}

\section{Introduction}
\label{sec:introduction}

Quasifolds were introduced by Elisa Prato in \cite{P1999}, as a generalization of manifolds and orbifolds. Whereas manifolds are locally modelled by Cartesian spaces,\footnote{$\R^n$ for some $n$.} and orbifolds are locally modelled by quotients of Cartesian spaces by finite group actions, quasifolds are locally modelled by quotients of Cartesian spaces by countable group actions. These spaces often have very coarse topologies. For example, the irrational tori $T_\alpha := \R / (\Z + \alpha \Z)$ (for $\alpha$ irrational) have trivial quotient topology. In contrast, when we view irrational tori as diffeological spaces, Donato and Iglesias-Zemmour \cite{DIZ1985} proved that $T_\alpha$ and $T_\beta$ are diffeomorphic as diffeological spaces if and only if $\alpha$ and $\beta$ are related by a fractional linear transformation with integer coefficients. \emph{Diffeological quasifolds}, defined by Iglesias-Zemmour and Prato in \cite{IZP2020}, are diffeological spaces that are, at each point, locally diffeomorphic to a quotient space $\R^n/\Gamma$, for a countable group $\Gamma$ acting \emph{affinely}\footnote{see Remark \ref{rem:2}} on $\R^n$. Special cases include orbifolds \cite{IZKZ2010}, and irrational tori. The groups $\Gamma$ may change from point to point. As diffeological spaces, diffeological quasifolds inherit a notion of smooth maps and a de Rham complex of differential forms.

A ``higher'' approach to orbifolds is to define them as Lie groupoids that are, at each point, locally isomorphic to the restriction of an action groupoid $\Gamma \ltimes \R^n$, for a finite group $\Gamma$ acting linearly on $\R^n$. We similarly define a \emph{quasifold groupoid} to be a Lie groupoid that is, at each point, locally isomorphic to the restriction of the action groupoid $\Gamma \ltimes \R^n$, for a countable group $\Gamma$ acting affinely on $\R^n$. The groups $\Gamma$ may change from point to point.

Our main result is that the categories of diffeological quasifolds and quasifold groupoids are equivalent, after restricting to local isomorphisms and effective quasifold groupoids. This completes and extends earlier work about orbifolds by Masrour Zoghi and Yael Karshon \cite{Z2010}. We place quasifold groupoids in the bicategory $\bi$,\footnote{using Lerman's notation in \cite{L2010}} whose objects are Lie groupoids, whose arrows are bibundles, and whose 2-arrows are morphisms of bibundles. We introduce the notion of a \emph{locally invertible} bibundle (Definition \ref{def:17}), and the sub-bicategory of effective quasifold groupoids $\qfoldgrpd^{\text{loc-iso}}_{\text{eff}}$,  whose objects are effective quasifold groupoids, whose arrows are locally invertible bibundles, and whose 2-arrows are morphisms of bibundles. We place diffeological quasifolds in the category $\diffeol$ of diffeological spaces, whose objects are diffeological spaces, and whose arrows are diffeologically smooth maps. We view this as a bicategory with the identity 2-arrows. We introduce the sub-bicategory of diffeological quasifolds $\diffeolqfold^{\text{loc-iso}}$, whose objects are diffeological quasifolds, whose arrows are local diffeomorphisms, and with identity 2-arrows.

There is a quotient functor of bicategories $\mbf{F}:\bi \to \diffeol$, that takes a groupoid to its orbit space. Our main result is Theorem \ref{thm:1}:
\begin{theorem*}
  The quotient functor $\mbf{F}$ restricts to a functor of bicategories $\mbf{F}_{\text{Quas}}:\qfoldgrpd^{\text{loc-iso}}_{\text{eff}} \to \diffeolqfold^{\text{loc-iso}}$ that is: essentially surjective, surjective on arrows, and injective on arrows up to 2-isomorphism.
\end{theorem*}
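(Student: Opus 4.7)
The plan is to leverage the local structure of both quasifold groupoids and diffeological quasifolds, using the fact that locally both are described by affine quotients $\R^n/\Gamma$, and the local model quasifold groupoid is the action groupoid $\Gamma \ltimes \R^n$. First I would verify that $\mbf{F}_{\text{Quas}}$ is well-defined: the orbit space of an effective quasifold groupoid inherits a diffeological quasifold structure from its local charts, and a locally invertible bibundle descends to a diffeologically smooth map that is, in any local chart, an equivariant affine diffeomorphism, hence a local diffeomorphism on orbit spaces.

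For essential surjectivity, given a diffeological quasifold $X$, I would choose a cover by local charts $\phi_i: \R^{n_i}/\Gamma_i \to U_i \subset X$ with each $\Gamma_i$ acting affinely; after replacing $\Gamma_i$ by its effective quotient $\Gamma_i/K_i$ (where $K_i$ is the kernel of the action), we may assume $\Gamma_i$ acts effectively. I would then assemble the disjoint union of action groupoids $\bigsqcup_i \Gamma_i \ltimes \R^{n_i}$ into a single Lie groupoid by adjoining overlap arrows: for each pair $(i,j)$ and each point in $U_i \cap U_j$, the chart compatibility supplies germs of affine maps between the local models, and these germs form an open manifold of arrows. The resulting groupoid is effective, is locally isomorphic to $\Gamma \ltimes \R^n$, and its orbit space is diffeomorphic to $X$ by construction.

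For surjectivity on arrows, given quasifold groupoids $G \rra G_0$, $H \rra H_0$ representing $X, Y$ and a local diffeomorphism $f : X \to Y$, I would build a bibundle by a local-to-global construction: around each $x \in X$ there are charts presenting $G$ and $H$ as action groupoids $\Gamma \ltimes V$ and $\Gamma' \ltimes V'$, and (after shrinking $V$) $f$ lifts to an affine map $V \to V'$ equivariant along some homomorphism $\Gamma \to \Gamma'$, since every diffeomorphism between diffeological quasifold charts $\R^n/\Gamma \to \R^{n'}/\Gamma'$ admits such an affine lift. These local lifts determine local bibundles, which glue over intersections using the transition arrows of $H$; the resulting global bibundle is locally invertible because $f$ is a local diffeomorphism.

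The main obstacle is \textbf{injectivity on arrows up to 2-isomorphism}: suppose two locally invertible bibundles $P, Q : G \to H$ induce the same smooth map $X \to Y$ on orbit spaces. Working in local charts, $P$ and $Q$ correspond to pairs of equivariant affine maps between action groupoids $\Gamma \ltimes V \to \Gamma' \ltimes V'$ inducing the same quotient map. Effectivity of $H$ forces these lifts to agree up to unique arrows of $H$, giving canonical local isomorphisms of bibundles. The delicate point is to verify that these locally-defined isomorphisms glue into a global bibundle morphism $P \to Q$; here the effectivity of $H$ is indispensable, as uniqueness of the local isomorphisms is exactly what guarantees their restrictions to intersections agree. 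Since the 2-arrows in $\bi$ are precisely bibundle morphisms, this produces the required 2-isomorphism and completes injectivity.
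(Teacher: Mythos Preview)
Your overall architecture matches the paper's: build a germ groupoid from an atlas for essential surjectivity, lift local diffeomorphisms to bibundles via local lifts, and use effectivity to identify two bibundles inducing the same map. But there is a genuine error running through your argument for surjectivity on arrows (and, by inheritance, through essential surjectivity).

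You assert that a local diffeomorphism $f$ between model quasifolds $\msf{V}/\Gamma \to \msf{V}'/\Gamma'$ lifts to an \emph{affine} map, equivariant along a homomorphism $\Gamma \to \Gamma'$. This is false. Take $\Gamma = \Gamma' = \{1\}$: then any diffeomorphism $\msf{V} \to \msf{V}'$ is a diffeomorphism of model quasifolds, and almost none are affine. The paper's Lemma~\ref{lem:1} only says that a map \emph{preserving $\Gamma$-orbits for a single $\Gamma$} is locally an element of that $\Gamma$; it says nothing about maps between different models. What the paper actually proves (Lemma~\ref{lem:2}) is that $f$ lifts to a \emph{transition}, i.e.\ an arbitrary local diffeomorphism $\tilde f : \msf{V} \dashrightarrow \msf{V}'$, with no affine or equivariance structure. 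Consequently the arrows you adjoin between charts in your germ groupoid, and the local data defining your bibundle, must be germs of arbitrary transitions, not affine germs. The paper handles this by working throughout with the pseudogroup $\Psi(\cl A)$ of all orbit-preserving transitions and its germ groupoid; affinity only re-enters when restricting to a single chart $\msf{V}_i$, where Lemma~\ref{lem:1} identifies $\Gamma(\cl A)|_{\msf{V}_i}$ with $(\Gamma_i \ltimes \R^n)|_{\msf{V}_i}$.

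A second, smaller concern: for surjectivity on arrows you propose to glue local bibundles. Gluing bibundles requires a cocycle condition and is not automatic. The paper sidesteps this by defining the bibundle $Q$ globally in one stroke as a set of germs $\{\germ_y\psi \mid f\pi\psi = \pi'\}$, which is manifestly a manifold (an open subset of a Haefliger groupoid) with commuting actions; no gluing is needed. Similarly, for injectivity the paper avoids gluing local 2-isomorphisms by first replacing the bibundles $P, Q$ with functors $F, K$ on a cover (every bibundle is locally of the form $\langle F \rangle$), and then writing down a single global natural transformation $\alpha(x) = \Eff^{-1}(\germ_{F(x)} K F_i^{-1})$ using that $\Eff : H \to \Gamma^H$ is an isomorphism for effective quasifold groupoids.
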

This applies \emph{mutatis mutandis} to orbifold groupoids and diffeological orbifolds.

By identifying isomorphic bibundles of groupoids, we form the Hilsum-Skandalis category $\mbf{HS}$ of Lie groupoids, whose objects are Lie groupoids, and whose arrows are isomorphism classes of bibundles. Restricting to quasifolds, and using the same notation, the theorem above yields
\begin{corollary*}
  The functor $\mbf{F}_{\text{Quas}}$ gives an equivalence of categories between $\qfoldgrpd^{\text{loc-iso}}_{\text{eff}}$ (viewed in $\mbf{HS}$) and $\diffeolqfold^{\text{loc-iso}}$.
\end{corollary*}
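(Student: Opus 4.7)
The plan is to deduce the corollary from the theorem by reinterpreting its three conclusions once we pass to the Hilsum--Skandalis category on the source side. The target bicategory $\diffeolqfold^{\text{loc-iso}}$ has only identity 2-arrows, hence is already an ordinary category. Viewing $\qfoldgrpd^{\text{loc-iso}}_{\text{eff}}$ in $\mbf{HS}$ collapses each Hom-groupoid to its set of 2-isomorphism classes of locally invertible bibundles, producing an ordinary category with the same objects. Because $\mbf{F}_{\text{Quas}}$ sends 2-isomorphic bibundles to equal smooth maps (the target having no nontrivial 2-arrows), it descends to a well-defined functor of ordinary categories, which I will still denote $\mbf{F}_{\text{Quas}}$.

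Next I would verify that this descended functor is essentially surjective and fully faithful, which together give an equivalence of ordinary categories. Essential surjectivity is a statement about the object assignment and transfers directly from the theorem: every object of $\diffeolqfold^{\text{loc-iso}}$ is diffeomorphic to $\mbf{F}_{\text{Quas}}(\cl{G})$ for some effective quasifold groupoid $\cl{G}$. Fullness follows from surjectivity on arrows: every local diffeomorphism between orbit spaces is represented by some locally invertible bibundle $\cl{P}$, so the induced map on Hom sets is surjective. Faithfulness follows from injectivity of arrows up to 2-isomorphism: if two locally invertible bibundles $\cl{P}$ and $\cl{Q}$ induce the same smooth map on orbit spaces, they are 2-isomorphic, hence represent the same arrow in $\mbf{HS}$.

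The substantive content of the equivalence is entirely packaged inside the theorem; what remains here is essentially formal. The main bookkeeping I would confirm is that composition of arrows in $\qfoldgrpd^{\text{loc-iso}}_{\text{eff}}$ viewed in $\mbf{HS}$ is the composition induced by horizontal composition of bibundles in $\bi$, so that $\mbf{F}_{\text{Quas}}$ genuinely respects identities and composition; this is immediate from the definition of $\mbf{HS}$ as the 1-truncation of $\bi$. Accordingly, the main obstacle in this circle of ideas is not in extracting the corollary, but in proving the theorem itself, in particular in showing that one can lift a local diffeomorphism of orbit spaces to a locally invertible bibundle, uniquely up to 2-isomorphism.
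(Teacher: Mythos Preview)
Your proposal is correct and matches the paper's approach: the paper treats this corollary as an immediate reformulation of the main theorem once one passes to $\mbf{HS}$, without spelling out the details. Your unpacking---that essential surjectivity, fullness, and faithfulness of the induced functor on $1$-truncations correspond exactly to the three clauses of the theorem---is precisely the formal content the paper leaves implicit.
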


In Iglesias-Zemmour and Prato's recent work \cite{IZP2020}, which builds on \cite{IZL2018}, the authors define diffeological quasifolds, and associate to each diffeological quasifold a groupoid, and then a $C^*$ algebra, that is unique up to Muhly-Renault-Williams equivalence. Their construction implies that the quotient functor $\mbf{F}_{\text{Quas}}$ is essentially surjective, and is full on isomorphisms. They do not view their groupoids as Lie groupoids, nor do they introduce the notion of quasifold groupoids.

Prato originally introduced quasifolds through symplectic geometry in \cite{P2001}, in order to generalize the Delzant construction. Recent work involving quasifolds and symplectic geometry includes \cite{BP2018}, \cite{BP2019}, \cite{BPZ2019}, and \cite{LS2019}. Hoffman \cite{H2020} works with not-necessarily-effective quasifold groupoids as stacks. Our results can also be written in terms of (effective) stacks, but we leave this for another paper. For orbifolds as diffeological spaces, we already mentioned \cite{IZKZ2010} and \cite{IZL2018}.  A thorough comparison of the categories of orbifolds as diffeological spaces, orbifold groupoids, and orbifolds as Sikorski differential spaces, can be found in \cite{W2017}.

Each section of this article addresses a different component of the main theorem. In Section \ref{sec:diff-lie-group}, we review diffeology, and introduce diffeological quasifolds. In Section \ref{sec:lie-groupoids}, we review Lie groupoids, introduce locally invertible bibundles, and introduce quasifold groupoids. In Section \ref{sec:from-diff-quas}, we show that the quotient functor $\mbf{F}_{\text{Quas}}$ is essentially surjective. In Section \ref{sec:lift-local-isom} we show that the quotient functor $\mbf{F}_{\text{Quas}}$ is surjective on arrows. In Section \ref{sec:an-equiv-categ}, we show that the quotient functor $\mbf{F}_{\text{Quas}}$ is injective on arrows up to 2-isomorphism. In the last section, Section 7, we describe two effective actions of $\Z$ on $\R$ whose orbits coincide, but whose action groupoids are not Morita equivalent. Namely, they are not related by an invertible bibundle. Thus, our results do not extend to arbitrary countable group actions.

\ifdebug \newpage \fi

\subsection*{Acknowledgement}
D. Miyamoto thanks Eugene Lerman and Rui Loja Fernandes for helpful comments on the bicategory of Lie groupoids, and also thanks his hosts at Tel Aviv University for the month he visited there. Y. Karshon thanks Itai Bar-Natan, David Metzler, Andre Henriques, Eugene Lerman, and Hans Duistermaat, for discussions that contributed to the earlier version of this work. Both authors are indebted to Masrour Zoghi, who set the groundwork and initial direction for this project.

\subsection*{Funding}
This work was partly funded by the Natural Sciences and Engineering Research Council of Canada, and by a Graduate Student Travel Grant provided by the Department of Mathematical and Computational Sciences at the University of Toronto Mississauga.

\section{Diffeological Quasifolds}
\label{sec:diff-lie-group}

\subsection{Diffeology}
\label{sec:diffeology}

For the reader unfamiliar with diffeology, we review it here.

\begin{definition}[Diffeology]
  \label{def:1}
  Let $X$ be a set. A \define{parametrization} into $X$ is a map from an open subset of a Cartesian space into $X$. A \define{diffeology} on $X$ is a set $\cl{D}$ of parametrizations, whose members are called \define{plots}, such that
  \begin{itemize}
  \item constant maps are plots;
  \item if a parametrization $p:\msf{U} \to X$ is such that about each $r \in \msf{U}$, there is an open $\msf{V} \subseteq \msf{U}$ and a plot $q:\msf{V} \to X$ such that $p = q|_{\msf{V}}$, then $P$ is a plot;
    \item if $p:\msf{U} \to X$ is a plot and $\msf{V}$ is an open subset of a Cartesian space, then for any smooth $F:\msf{V} \to \msf{U}$, the pre-composition $F^*p$ is a plot.
    \end{itemize}
    A set equipped with a diffeology is a \define{diffeological space}.
    \eod  
  \end{definition}


  The set of locally constant parametrizations into $X$, and the set of all parametrizations into $X$, are both diffeologies, called respectively \emph{discrete} and \emph{coarse}. Every other diffeology sits between these two. A classical smooth manifold\footnote{A smooth manifold is a topological space equipped with a maximal smooth atlas that, unless we say otherwise, is Hausdorff and second-countable.} $M$ carries a canonical diffeology $\cl{D}_M$ consisting of the smooth maps (in the usual sense) from subsets of Cartesian spaces into $M$.

  \begin{definition}[Smooth maps]
    \label{def:2}
    We say a map $f:X \to Y$ between diffeological spaces is \define{(diffeologically) smooth} if for every plot $p$ of $X$, the pullback $p^*f$ is a plot of $Y$. Denote the set of smooth maps from $X$ to $Y$ by $C^\infty(X, Y)$.
    \eod
\end{definition}

If $X$ is discrete, or $Y$ is coarse, all maps $X \to Y$ are smooth. A map between classical manifolds is diffeologically smooth if and only if it is smooth in the classical sense. Diffeological spaces and smooth maps between them form a category.

\begin{definition}[Category of diffeological spaces]
  \label{def:3}
  The category $\diffeol$ has objects diffeological spaces, and arrows smooth maps between them. When we need to view $\diffeol$ as a bicategory, we simply add the identity 2-arrows.
  \eod
\end{definition}

Assigning to each classical smooth manifold $M$ the diffeological space $(M, \cl{D}_M)$ defines a full and faithful functor from the category of classical manifolds, with their smooth maps, into $\diffeol$.

We will require a notion of locality for diffeological spaces.

\begin{definition}[D-topology]
  \label{def:4}
  The \define{D-topology} on a diffeological space $X$ is the finest topology in which all plots are continuous. Equivalently, $U \subseteq X$ is D-open if and only if $p^{-1}(U)$ is open for all plots $p$.
  \eod
\end{definition}

Every smooth map is continuous in the D-topology. The D-topology of a classical manifold $M$, viewed as a diffeological space, is its manifold topology.

Every subset of a diffeological space inherits a subset diffeology:

\begin{definition}[Subset diffeology]
  \label{def:5}\
  For a subset $S$ of a diffeological space $X$, with inclusion denoted $\iota: S\hookrightarrow X$, the \define{subset diffeology} on $S$ consists of all parametrizations $p:U \to S$ such that $\iota \circ p$ is a plot of $X$.
  \eod
\end{definition}

 Given a diffeological space $X$ and a subset $S$, the D-topology of the subset diffeology on $S$ is contained in the subset topology that $S$ inherits from the D-topology on $X$. When $S$ is a D-open subset of $X$, these topologies coincide. We use the D-topology and subset diffeology to define local diffeomorphisms:

\begin{definition}[Local diffeomorphisms]
  \label{def:6}\
  A map $f:X \to Y$ between diffeological spaces is a \define{local diffeomorphism} if, for each $x \in X$, the map restricts to a diffeomorphism between some D-open neighbourhoods of $x$ and $f(x)$. Denote the set of local diffeomorphisms by $\Diff_{\loc}(X, Y)$.

  We call a diffeomorphism $f:U \to U'$, between D-open subsets $U \subseteq X$ and $U' \subseteq Y$, a \define{transition} from $X$ to $Y$. We sometimes write $f:X \dashrightarrow Y$.
  \eod
\end{definition}

We emphasize that local diffeomorphisms are globally defined, and transitions are locally defined.

Finally, a diffeology passes to quotients:

\begin{definition}[Quotient diffeology]
  \label{def:7}
 Given a diffeological space $X$, and equivalence relation $\cl R$, with quotient map $\pi:X \to X/\cl R$, the \define{quotient diffeology} on $X/\cl R$ consists of those parametrizations $p:\mathsf{U} \to X/\cl R$ such that about each $r \in \mathsf{U}$ there is an open $\mathsf{V}\subseteq \mathsf{U}$ and a plot $q:\mathsf{V} \to X$ such that $p|_{\mathsf{V}} = \pi \circ q$. In a diagram,
 \begin{equation*}
   \begin{tikzcd}
     & & X \ar[d, "\pi"] \\
     r \in \mathsf{V} \ar[rru, "\exists q"] \ar[r, hook] & \mathsf{U} \ar[r, "p"] & X/\cl{R}.
   \end{tikzcd}
 \end{equation*}
 \eod
\end{definition}

The D-topology of the quotient diffeology is the quotient topology induced by the D-topology on $X$.

\subsection{Diffeological quasifolds}
We now define diffeological quasifolds. We use Iglesias-Zemmour and Prato's definition from \cite{IZP2020}. It is similar to the diffeological orbifolds introduced by Iglesias-Zemmour, Karshon, and Zadka in \cite{IZKZ2010}.

\begin{definition}[Diffeological quasifolds]
\label{def:8}
A \define{diffeological} $n$-\define{quasifold} is a second-countable diffeological space $X$ such that, for each $x \in X$, there is a D-open neighbourhood $U$ of $x$, a countable subgroup $\Gamma$ of affine transformations of $\R^n$, an open, $\Gamma$-invariant subset $\msf{V} \subseteq \R^n$, and a diffeological diffeomorphism (a \define{chart}) $F:U \to \msf{V}/\Gamma$. Here, $\msf{V}/\Gamma$ is equipped with its quotient diffeology, which coincides with the subset diffeology induced from $\R^n/\Gamma$, (c.f.\ Lemma \ref{lem:5}).

We call such  $\msf{V}/\Gamma$ a \define{model diffeological quasifold}, and we call a collection $\{F:U \to \msf{V}/\Gamma\}$ of diffeomorphisms whose domains $U$ are an open cover of $X$ a \define{(diffeological quasifold) atlas} for $X$.
  \eod
\end{definition}

\begin{remark}
  \label{rmk:1}
To be a diffeological quasifold is a local condition: given a diffeological quasifold $X$, for every $x \in X$ and open neighbourhood $U'$ of $x$, there is an open neighbourhood $U$ of $x$ contained in $U'$ such that $U$ is diffeomorphic to a model quasifold $\msf{V}/\Gamma$. Compare to Remark \ref{rem:9}.

  Denoting the quotient by $\pi:\R^n \to \R^n/\Gamma$, no generality is gained by writing our models as $\pi(\msf{V})$ for arbitrary open subsets $\msf{V}$ of $\R^n$. See Lemma \ref{lem:5}, and compare this to Remark \ref{rmk:2}. Furthermore, no generality is gained if we assume $\Gamma$ is merely a countably group acting affinely on $\R^n$  (i.e.\ if we assume the action homomorphism $\Gamma \to \Aff(\R^n)$ is not necessarily injective). This is because $\msf{V}/\Gamma = \msf{V}/(\Gamma/\ker(\Gamma))$, where $\ker(\Gamma)$ denotes the subgroup of those $\gamma \in \Gamma$ that act as the identity. Compare this to Remark \ref{rmk:2}. 
  \eor
\end{remark}

\begin{definition}[Category of diffeological quasifolds]
  \label{def:9}
  The category $\diffeolqfold$ is the subcategory of $\diffeol$ whose objects are diffeological quasifolds and whose morphisms are smooth maps between them. Restricting morphisms to include only local diffeomorphisms, we get $\diffeolqfold^{\text{loc-iso}}$.
  \eod
\end{definition}

\begin{example}
  \label{ex:1}
  The \emph{irrational tori} are important examples of quasifolds. For an irrational $\alpha \in \R$, the irrational torus $T_\alpha$ is the diffeological quotient space $\R / (\Z + \alpha \Z)$. Here the group $\Z + \alpha \Z$ is countable, and acts affinely on $\R$ by addition, hence $T_\alpha$ is a model diffeological quasifold. Iglesias-Zemmour and Donato in \cite{DIZ1985} prove that $T_\alpha \cong T_\beta$ if and only if $\alpha$ and $\beta$ are related by a fractional linear transformation with integer coefficients. On the other hand, irrational tori are trivial as topological spaces, and thus any two are homeomorphic. In Example \ref{ex:7}, we illustrate the groupoid picture.
  \eoe
\end{example}

\begin{example}
\label{ex:2}
If the groups $\Gamma$ are all finite subgroups of $\GL(\R^n)$, the corresponding quasifold is a \emph{diffeological orbifold} as defined in \cite{IZKZ2010}. Its D-topology need not be Hausdorff. By Palais' slice theorem \cite{P1961}, the quotient space of a locally proper Lie group action with finite isotropy groups is a diffeological orbifold. More generally, the quotient space of a locally proper Lie groupoid with finite isotropy groups is a diffeological orbifold, c.f.\ Example \ref{ex:8}.
\eoe
\end{example}

\begin{remark}
  \label{rem:2}
  Our model quasifolds, namely diffeological spaces $\msf{V} / \Gamma$ for countable groups $\Gamma$ acting affinely on $\msf{V} \subseteq \R^n$, are the same as in \cite{IZP2020}, but differ from Prato's original models \cite{P1999}. In \cite{P1999}, Prato models quasifolds by the topological quotient spaces $M/\Gamma$, where $M$ is a connected, simply-connected manifold, and $\Gamma$ is a discrete group acting smoothly, such that the set of points where the action is free is dense, open, and connected. It is not clear whether these models are equivalent. Nevertheless, we expect the theory that we develop to work equally well for Prato's original quasifolds, because the two key lemmas below still hold.
\eor
\end{remark}

The two key lemmas below concern lifting properties of maps into model quasifolds. The first is a variant of a theorem in Section 3 of \cite{IZP2020}, which is similar to Lemma 17 from \cite{IZKZ2010}. Its proof uses the same technique found in Lemma 5.8 of \cite{Miy2022} and Proposition 5.4 of \cite{KW2016}.
\begin{lemma}
\label{lem:1}
  Suppose $\Gamma$ is a countable group acting affinely on $\R^n$, and $\msf{W}$ is a connected open subset of $\R^n$, and $h:\msf{W} \to \R^n$ is a $C^1$ map preserving $\Gamma$-orbits. Then for some $\gamma \in \Gamma$, of the form $\gamma \cdot x = A_\gamma x + b_\gamma$,
  \begin{equation*}
    h(x) = \gamma \cdot x = A_\gamma x + b_\gamma, \quad \text{for all } x \in \msf{W}. 
  \end{equation*}
\end{lemma}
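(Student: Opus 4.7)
The plan is to combine the Baire category theorem with the connectedness of $\msf{W}$ and the first-order rigidity of affine maps.

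First I would show that $h$ agrees with a specific $\gamma_0\cdot$ on a nonempty open subset of $\msf{W}$. For each $\gamma \in \Gamma$, the set $\msf{W}_\gamma := \{x \in \msf{W} : h(x) = \gamma \cdot x\}$ is closed as the coincidence locus of two continuous maps, and by hypothesis $\msf{W} = \bigcup_{\gamma \in \Gamma} \msf{W}_\gamma$. Because $\msf{W}$ is a Baire space (nonempty open in $\R^n$) and $\Gamma$ is countable, some $\msf{W}_{\gamma_0}$ has nonempty interior $U_0$. On $U_0$, $h$ coincides with the affine map $\gamma_0\cdot$, and in particular $Dh \equiv A_{\gamma_0}$ there.

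Next I would propagate this local coincidence to all of $\msf{W}$. Set
\[ F := \{x \in \msf{W} : h(x) = \gamma_0 \cdot x \text{ and } Dh(x) = A_{\gamma_0}\}; \]
by continuity of $h$ and $Dh$, $F$ is closed, and the previous paragraph gives $U_0 \subseteq F$. Connectedness of $\msf{W}$ then reduces the lemma to showing $F$ is open: fixing $x_0 \in F$ and, after translating and replacing $h$ by $\gamma_0^{-1} \circ h$, reducing to $x_0 = 0$, $h(0) = 0$, $Dh(0) = I$, I must prove $h = \mathrm{id}$ on some neighborhood of $0$.

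The main obstacle is this local sub-claim. The $C^1$ hypothesis gives $|h(x) - x| = o(|x|)$ and $|Dh(x) - I| \to 0$ as $x \to 0$. A second application of Baire inside a small ball $B_\delta(0) \subseteq \msf{W}$ to the cover $\{B_\delta(0) \cap \msf{W}_\gamma\}_\gamma$ yields an open subset where $h$ agrees with a single $\gamma_1\cdot$, and the two $C^1$ estimates together pin the affine data $(A_{\gamma_1}, b_{\gamma_1})$ close to $(I, 0)$. In the orbifold case (finite $\Gamma$) the set $\{A_\gamma : A_\gamma \ne I\}$ is bounded away from $I$, so only the trivial $\gamma_1$ survives; the nontrivial pieces $\msf{W}_\gamma \cap B_{\delta'}(0)$ are then nowhere dense and, combined with the closedness of $\msf{W}_e$, one concludes $\msf{W}_e \cap B_{\delta'}(0) = B_{\delta'}(0)$. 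For general quasifold $\Gamma$, where $\Gamma$ may accumulate at the identity in $\Aff(\R^n)$, this step needs the more delicate quantitative argument from \cite{Miy2022} and \cite{KW2016}: for $\gamma \ne e$ fixing $0$, the $o(|x|)$ estimate forces the coincidence set $\msf{W}_\gamma$ near $0$ to be tangent to the proper affine subspace $\ker(A_\gamma - I)$, which prevents non-trivial $\gamma$'s from populating full neighborhoods of $0$ and thereby yields openness of $F$.
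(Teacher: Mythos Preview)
Your approach diverges from the paper's and leaves the main difficulty unresolved. You set up an open--closed argument on $F=\{x:h(x)=\gamma_0\cdot x,\ Dh(x)=A_{\gamma_0}\}$, reducing openness to a local rigidity claim: if $h(0)=0$, $Dh(0)=I$, and $h$ preserves $\Gamma$-orbits, then $h=\mathrm{id}$ near $0$. For finite $\Gamma$ your sketch is fine, but for general countable $\Gamma$ you defer to the references and offer only the heuristic that $\msf{W}_\gamma$ near $0$ is ``tangent to $\ker(A_\gamma-I)$.'' This is not a proof: it does not address pure translations ($A_\gamma=I$, $b_\gamma\neq 0$), nor $\gamma$ not fixing $0$, and even for linear $\gamma$ with $A_\gamma$ accumulating at $I$ it is unclear why tangency to a proper subspace prevents $\msf{W}_\gamma$ from acquiring interior arbitrarily close to $0$. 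The step you postpone is exactly the heart of the matter.

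The paper sidesteps this local analysis entirely with a global argument on the derivative. From Baire, $\bigcup_\gamma \Delta_\gamma^\circ$ is open and dense in $\msf{W}$, and on it $Dh$ is locally constant with values in the countable set $\{A_\gamma\}$. Continuity of $Dh$ on the connected $\msf{W}$ then forces $Dh\equiv A_{\gamma_0}$ everywhere at once---no boundary point of $F$ is ever examined. A second pass with the continuous function $h-A_{\gamma_0}x$, whose values on the dense open set lie in the countable set of translation parts $\{b_\gamma:A_\gamma=A_{\gamma_0}\}$, then pins down $b_{\gamma_0}$. This two-stage ``linear part first, then translation'' trick is precisely the technique from \cite{Miy2022} and \cite{KW2016} that the paper cites; those references support the paper's global argument, not a local rigidity statement of the kind you invoke, so your appeal to them for the missing step is misplaced.
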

Before proving this lemma, we state a convenient corollary. Recall that two smooth functions $f,g:M \to N$ between manifolds have the same \define{germ} at $x \in M$ if there is a neighbourhood $U$ of $x$ for which $f|_U = g|_U$; we denote by $\germ_x f$ the equivalence class of functions with the same germ at $x$ as $f$.
\begin{corollary}
  \label{cor:1}
  Suppose $\Gamma$ is a countable group acting affinely on $\R^n$, and $\msf{W}$ is a (not necessarily connected) open subset of $\R^n$, and $h:\msf{W} \to \R^n$ is a $C^1$ map preserving $\Gamma$-orbits. Then for each $x \in W$, there is some $\gamma \in \Gamma$ such that $\germ_x f = \germ_x \gamma$.
\end{corollary}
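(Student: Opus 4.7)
The plan is to reduce the (possibly disconnected) $\msf{W}$ to the connected case handled by Lemma \ref{lem:1} by working on a small neighbourhood of $x$. Fix $x \in \msf{W}$. Since $\msf{W}$ is open in $\R^n$, I pick any connected open neighbourhood $\msf{B}$ of $x$ contained in $\msf{W}$ (for instance an open ball around $x$). Then I restrict $h$ to $\msf{B}$ and observe that the hypotheses of Lemma \ref{lem:1} are preserved: $\msf{B}$ is connected and open in $\R^n$, the restriction $h|_{\msf{B}}$ is $C^1$ with values in $\R^n$, and $h|_{\msf{B}}$ still preserves $\Gamma$-orbits, because this condition is pointwise on the domain (for each $y \in \msf{B}$, the image $h(y)$ lies in the $\Gamma$-orbit of $y$, directly inherited from the corresponding property of $h$).

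By Lemma \ref{lem:1}, there exists $\gamma \in \Gamma$, acting as $\gamma \cdot z = A_\gamma z + b_\gamma$, such that $h(z) = \gamma \cdot z$ for every $z \in \msf{B}$. Since $\gamma$ is defined as an affine transformation on all of $\R^n$, and $\msf{B}$ is an open neighbourhood of $x$ on which $h$ and $\gamma$ agree, the definition of germ gives $\germ_x h = \germ_x \gamma$, which is the desired conclusion.

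There is essentially no obstacle here: the only thing to check carefully is that the orbit-preserving property passes to the restriction, and this is immediate. The corollary is a direct localisation of Lemma \ref{lem:1} obtained by choosing a connected open neighbourhood inside each component of $\msf{W}$. Note that the $\gamma$ selected may depend on the point $x$ (and indeed may differ from component to component of $\msf{W}$), which is exactly why the global conclusion of Lemma \ref{lem:1} weakens to a germ statement in the disconnected setting.
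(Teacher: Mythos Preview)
Your proof is correct and is exactly the argument the paper intends: the corollary is stated without proof in the paper, being the immediate localisation of Lemma~\ref{lem:1} obtained by applying it on a connected open neighbourhood of $x$. There is nothing to add.
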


\begin{proof}[Proof of Lemma \ref{lem:1}]
  For each $\gamma \in \Gamma$, set $\Delta_\gamma := \{x \in \msf{W} \mid h(x) = \gamma \cdot x\}$, and denote its interior by $\Delta_\gamma^\circ$. Since $h$ preserves $\Gamma$-orbits, $\msf{W} = \bigcup_\Gamma \Delta_\gamma$. Furthermore, each $\Delta_\gamma$ is closed in $\msf{W}$, being the pre-image of the diagonal in $\msf{W} \times \msf{W}$ under the continuous map $x \mapsto (h(x), \gamma \cdot x)$. Therefore, as a consequence of the Baire category theorem, $\bigcup_\Gamma \Delta_\gamma^\circ$ is dense in $\msf{W}$.

  The function $h$ is affine when restricted to each $\Delta_\gamma^\circ$, so $Dh = A_\gamma$ on this subset. Thus,
  \begin{equation*}
    Dh\left(\bigcup \Delta_\gamma^\circ\right) = \{A_\gamma \mid \gamma \in \Gamma, \text{ and }\Delta_\gamma^\circ \neq \emptyset\}
  \end{equation*}
  is discrete. But $Dh$ is continuous, and therefore
  \begin{equation*}
    \ol{Dh\left(\bigcup \Delta_\gamma^\circ \right)} = \ol{Dh \left(\ol{\bigcup \Delta_\gamma^\circ}\right)} = \ol{Dh(\msf{W})}
  \end{equation*}
  is discrete. Since $\msf{W}$ is connected, this set must be a singleton, and $Dh = A_\gamma$ for some $\gamma \in \Gamma$ on all of $\msf{W}$. Thus, only the $\Delta_{(A_\gamma, b_\gamma)}^\circ$ are potentially non-empty.

  The difference $h - A_\gamma$ is continuous on $\msf{W}$, and restricts to $b_\gamma$ on each $\Delta_{(A_\gamma, b_\gamma)}^\circ$. By the exact same argument as before, we conclude that precisely one $\Delta_{(A_\gamma, b_\gamma)}^\circ$ is non-empty, and the corresponding $\gamma$ is the desired element of $\Gamma$.
\end{proof}

This next lemma appears as a theorem in Section 4 of \cite{IZP2020}. It is similar to Lemma 23 in \cite{IZKZ2010}, where they give a global result for finite group actions. See also Lemmas 1.6 and 1.7 in \cite{P2001}.

\begin{lemma}
\label{lem:2}
Suppose $f:\msf{V}/\Gamma \to \msf{V}'/\Gamma'$ is a local diffeomorphism between model diffeological quasifolds. For every $\bm{x} \in \msf{V}/\Gamma$ and every $r \in \bm{x}$ and $r' \in f(\bm{x})$, there is a transition $\ti f$ between open subsets of $\msf{V}$ and $\msf{V}'$ taking $r$ to $r'$ and lifting $f$.
\end{lemma}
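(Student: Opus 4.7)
The plan is to locally lift $f$ and its inverse using the quotient diffeology on the models, and then use Lemma~\ref{lem:1} to promote these lifts to genuine transitions. First I would shrink to a D-open neighborhood $U$ of $\bm x$ on which $f|_U:U\to f(U)$ is a diffeomorphism, with smooth inverse $g:f(U)\to U$. Let $\pi:\msf V\to\msf V/\Gamma$ and $\pi':\msf V'\to\msf V'/\Gamma'$ denote the quotient maps. Since $f\circ\pi|_{\pi^{-1}(U)}$ is a plot of $\msf V'/\Gamma'$, Definition~\ref{def:7} supplies a connected open $\msf W_0\subseteq\pi^{-1}(U)$ containing $r$ and a smooth lift $\ti f_0:\msf W_0\to\msf V'$ with $\pi'\circ\ti f_0=f\circ\pi|_{\msf W_0}$. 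Because $\pi'(\ti f_0(r))=f(\bm x)=\pi'(r')$, I can compose with some $\gamma'\in\Gamma'$ sending $\ti f_0(r)$ to $r'$ so that the new $\ti f_0$ satisfies $\ti f_0(r)=r'$. The same construction applied to $g$ at $r'$ produces a connected open neighborhood $\msf W_0'$ of $r'$ and a smooth lift $\ti g_0:\msf W_0'\to\msf V$ with $\pi\circ\ti g_0=g\circ\pi'|_{\msf W_0'}$ and $\ti g_0(r')=r$.

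Next, after shrinking $\msf W_0$ so that $\ti f_0(\msf W_0)\subseteq\msf W_0'$, the composition $\ti g_0\circ\ti f_0:\msf W_0\to\msf V$ satisfies
\[
  \pi\circ(\ti g_0\circ\ti f_0)=g\circ\pi'\circ\ti f_0=g\circ f\circ\pi|_{\msf W_0}=\pi|_{\msf W_0},
\]
so it preserves $\Gamma$-orbits. Lemma~\ref{lem:1} would then give some $\gamma\in\Gamma$ with $\ti g_0\circ\ti f_0=\gamma$ on $\msf W_0$, and differentiating yields $D\ti g_0\cdot D\ti f_0=A_\gamma$, which is invertible. Hence $D\ti f_0$ is injective everywhere on $\msf W_0$, forcing $n\le n'$. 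A symmetric argument on $\msf W_0'$ (after analogous shrinking) gives $n'\le n$ together with $D\ti g_0$ injective, so $n=n'$ and both differentials are in fact invertible.

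By the classical inverse function theorem, $\ti f_0$ then restricts to a diffeomorphism from some open $\msf W\subseteq\msf W_0$ containing $r$ onto an open subset of $\msf V'$; setting $\ti f:=\ti f_0|_{\msf W}$, the identities $\pi'\circ\ti f=f\circ\pi|_{\msf W}$ and $\ti f(r)=r'$ exhibit $\ti f$ as the desired transition. The key technical ingredient is Lemma~\ref{lem:1}, which rigidifies orbit-preserving $C^1$ maps on a connected open set. The step I expect to require the most care is reconciling the possibly distinct dimensions $n$ and $n'$ of the two model Cartesian spaces; my plan handles this only after producing lifts in both directions, so that Lemma~\ref{lem:1} applied twice pins down both differentials and yields $n=n'$.
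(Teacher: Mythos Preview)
Your proposal is correct and follows essentially the same approach as the paper's proof: lift $f$ and its local inverse using the quotient diffeology, adjust by a group element to hit the prescribed basepoints, compose the lifts to get an orbit-preserving map, invoke Lemma~\ref{lem:1} (the paper uses its Corollary~\ref{cor:1}) to see the composition is affine, and then apply the inverse function theorem. The only cosmetic difference is that you deduce $n=n'$ by separate injectivity arguments on the differentials, whereas the paper phrases it as $D\ti f_r$ having both a left and a right inverse; these are equivalent observations.
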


\begin{proof}
  Denote the quotient maps by $\pi$ and $\pi'$, respectively. We first show there is a local lift taking $r$ to $r'$. The map $f \pi:\msf{V} \to \msf{V}'/\Gamma'$ is diffeologically smooth, and therefore admits a local lift $\widetilde{f\pi}$ about $r$. Both $r'$ and $\widetilde{f\pi}(r)$ must be in the same orbit (precisely, in $f(\bm{x})$), therefore there is some $\gamma' \in \Gamma'$ for which $\gamma' \cdot \widetilde{f \pi}(r) = r'$. Then $\gamma' \circ \widetilde{f\pi}$ is the desired local lift of $f$. This is illustrated in the following diagram (the dashed arrow indicates ``locally defined''):
  \begin{equation*}
    \begin{tikzcd}
      (\msf{V}, r) \ar[d, "\pi"] \ar[r, dashed, "\widetilde{f\pi}"] \ar[dr, "f\pi"] & (\msf{V}', \widetilde{f\pi}(r)) \ar[d, "\pi'"] \ar[r, "\gamma'"] & (\msf{V}',r') \ar[dl, "\pi'"] \\
     (\msf{V}/\Gamma, \bm{x}) \ar[r, "f"] & (\msf{V}'/\Gamma', f(\bm{x})). &
    \end{tikzcd}
  \end{equation*}
  Now fix a local lift $\ti{f}:(\msf{V},r) \dashrightarrow (\msf{V}',r')$ of $f$ taking $r$ to $r'$. By assumption on $f$, there are open neighbourhoods $U$ of $\bm{x}$ and $U'$ of $\bm{x}'$, such that the restriction $f:U \to U'$ is a diffeomorphism. By exactly the same procedure as above, we may lift $f^{-1}:U' \to U$ to a locally-defined function $\ti{s}:(\msf{V}',r') \dashrightarrow (\msf{V},r)$.
  
  Consider the composition $\ti{s} \ti{f}$, which is defined on some neighbourhood of $r$. It preserves $\Gamma$-orbits, because both $\ti{f}$ and $\ti{s}$ are lifts of $f$ and $f^{-1}$, respectively. Therefore, by Corollary \ref{cor:1}, there is some $\gamma \in \Gamma$ such that $\germ_r\ti{s} \ti{f} = \germ_r \gamma$. In particular, differentiating yields $D\gamma_r = D\ti{s}_{r'} D\ti{f}_r$, so $D\ti{f}_r$ has a left inverse. An exactly similar argument applied to $\tilde{f} \tilde{s}$ shows that $D\tilde{f}_r$ has a right inverse. By the inverse function theorem, we conclude $\ti{f}$ restricts to a transition taking $r$ to $r'$.
\end{proof}

\ifdebug \newpage \fi

\section{Quasifold Groupoids}
\label{sec:lie-groupoids}
In this section, we introduce quasifold groupoids.
\subsection{Bicategory of Lie groupoids}
\label{sec:hils-skand-categ}

 We work inside the bicategory Lie groupoids, denoted $\bi$, whose objects are Lie groupoids, whose arrows are bibundles, and whose 2-arrows are bibundle isomorphisms. For a thorough treatment, see Lerman's review \cite{L2010}, or Moerdijk and Mrc\u{u}n's book \cite{MM2003}. While the material in this subsection is standard, we take a less common local-first approach to the definition of bibundles. Our definition of locally invertible bibundle is new. At the end of this subsection, we introduce the functor $\mbf{F}:\bi \to \diffeol$.

A \define{Lie groupoid} is a small category $G \rra G_0$ with invertible arrows, such that the base $G_0$ is a (Hausdorff and second-countable) smooth manifold, the arrow space $G$ is a not necessarily Hausdorff nor second-countable smooth manifold, all structure maps are smooth, and furthermore the source map $s$ (hence the target map $t$) is a smooth submersion with Hausdorff fibers. We write the composition of arrows $g: x \mapsto y$ and $g':y \mapsto z$ as $g'g :x \mapsto z$. If $\iota: \cl{U} \to G_0$ is a submersion, we may form the pullback groupoid $\iota^*G \rra \cl{U}$, whose arrows from $x$ to $y$ are the arrows in $G$ from $\iota(x)$ to $\iota(y)$. If $U$ is an open subset of $G_0$, we denote the corresponding pullback groupoid by $G|_U$, and identify its arrow space with $s^{-1}(U) \cap t^{-1}(U)$. We say that $G$ is \define{\'{e}tale} if $s$ (hence $t$) is a local diffeomorphism. We say that $G$ is \emph{proper} if its arrow space is Hausdorff and the map $(s,t):G \to G_0 \times G_0$ is proper. By an \emph{isomorphism} of Lie groupoids we mean a smooth functor $F:G \to H$ such that there exists a smooth functor $F^{-1}:H \to G$ for which $FF^{-1}$ and $F^{-1}F$ are the identity functors. The \define{orbit space} of a Lie groupoid $G$ is the quotient of $G_0$ by this relation: $x \sim y$ if there is an arrow $x \mapsto y$. We denote the orbit space of $G$ by $|G|$ or $G_0/G$.

The full definition of a bicategory is rather technical, and we refer the reader to \cite{L2010} for an introduction. For our purpose, a \define{bicategory} consists of objects, arrows between objects, and 2-arrows between arrows. These must satisfy some compatibility and composition conditions, which we do not write. The objects and arrows almost form a category, except that composition of arrows need not be associative. Instead, for any composable $f,g,h$, there exists an invertible 2-arrow $\alpha:f(gh) \to (fg)h$. The arrows and 2-arrows form a category. 

A first example of a Lie groupoid is an action groupoid. Given a Lie group $G$ action on a manifold $M$ (from the left), the \define{action groupoid} $G \ltimes M$ has arrow space $G \times M$ and base space $M$. The source and target maps are $s(g,x) = x$ and $t(g,x) = g \cdot x$.

\begin{definition}[Right actions]
\label{def:10}
  A \define{right action} of a Lie groupoid $H\rra H_0$ on a manifold $P$ consists of maps $\mu:P \fiber{a}{t} H \to P$ and $a:P \to H_0$. We call $a$ the \define{anchor}, and $\mu$ the \define{multiplication}, and denote $\mu(p, h)$ by $p \cdot h$. We have the following Cartesian square on the left, and require the following commuting diagram of smooth functions on the right
  \begin{equation*}
    \begin{tikzcd}
      P \fiber{a}{t} H \ar[r, "\pr_2"] \ar[d, "\pr_1"] & H \ar[d, "t"] \\
      P \ar[r, "a"] & H_0
    \end{tikzcd}
    \qquad
    \begin{tikzcd}
      P \fiber{a}{t} H \ar[r, "\pr_2"] \ar[d, "\mu"] & H \ar[d, "s"] \\
      P \ar[r, "a"] & H_0.
    \end{tikzcd}
  \end{equation*}
 Furthermore we require
  \begin{itemize}
  \item $(p \cdot h)\cdot h' = p \cdot (hh')$ whenever this makes sense, and
  \item $p \cdot 1_{a(p)} = p$ for all $p \in P$.
  \end{itemize}

  Given a right $H$ action, we may form the \define{action groupoid} $P \rtimes H$, with arrows $P \tensor[_a]{\times}{_t} H$, base $P$, source $\mu$, target $\pr_1$, and multiplication of arrows $(p, h)(p', h') = (p, hh')$.  Furthermore, $a$ and $\pr_2$ assemble into a smooth functor of Lie groupoids $P \rtimes H \to H$.
  \eod
\end{definition}


\begin{remark}
  \label{rem:3}\
  \begin{enumerater}
  \item If $\cl{O} \subseteq P$ is an open, $H$-invariant subset of $P$, then $H$ acts on $\cl{O}$, with the same anchor $a:\cl{O} \to H_0$ and the same multiplication $\mu:\cl{O} \fiber{a}{t} H \to \cl{O}$.
    \item If $V \subseteq H_0$ is open, then $a^{-1}(V)$ need not be $H$-invariant, but we still have an action of $H|_V$ on $a^{-1}(V)$, with the same anchor $a:a^{-1}(V) \to V$, and the same multiplication. 
  \end{enumerater}
 
\end{remark}

Fix a Lie groupoid $H$. A principal $H$-bundle over a manifold $B$ will consist of a manifold $P$, a right $H$ action on $P$, and a map $\pi:P \to B$, satisfying certain axioms. We will denote such a principal bundle by $P \xar{\pi} B$. We will formulate these axioms using the notion of a trivial bundle.

\begin{definition}
  \label{def:11}
  Fix a smooth manifold $M$, and a map $\phi:M \to H_0$. The \define{trivial} $H$-\define{bundle over} $\phi$ consists of the space $P := M \fiber{\phi}{t} H$, equipped with the right $H$ action on $M \fiber{\phi}{t} H$ that is given by the anchor map $s \circ \pr_2 :M \fiber{\phi}{t} H \to H_0$ and the multiplication $(m, h) \cdot h' := (m, hh')$, and equipped with the projection $M \fiber{\phi}{t} H \xar{\pr_1} M$.
  \eod
\end{definition}

Note that in a trivial $H$-bundle, the fibers of the projection map $P \xar{\pi} M$ are the $H$-orbits, and the $H$ action on $P$ is free, meaning that if $p \cdot h = p \cdot h'$, then $h = h'$.  The following example motivates our definition.
\begin{example}
  \label{ex:3}
  If we take $M = H_0$, and $\phi:H_0 \to H_0$ the identity map, identifying $P = H_0 \fiber{\phi}{t} H$ with the arrow space $H$,  we find that the trivial $H$-bundle over the identity map is the right action of $H$ on its arrow space along the anchor $s$, given by right multiplication. The trivial $H$-bundle over an arbitrary $\phi:M \to H_0$ gives a Cartesian square,
  \begin{equation*}
  \begin{tikzcd}
    M \fiber{\phi}{t} H \ar[r, "\pr_2"] \ar[d, "\pr_1"] & H \ar[d, "t"] \\
    M \ar[r, "\phi"] & H_0
  \end{tikzcd}
\end{equation*}
in which the top map is $H$-equivariant, and the vertical maps are $H$-invariant. If we take $H$ to be a Lie group, viewed as a Lie groupoid over a point $H \rra \{pt\}$, then the right action of $H$ on itself is the usual one given by right multiplication, and the trivial $H$-bundle over $M \to \{pt\}$ is a usual trivial $H$-bundle $M \times H \to M$.
\eoe
\end{example}


\begin{definition}[Principal bundles]
\label{def:12}
  Fix a smooth manifold $B$ and a Lie groupoid $H$. A \emph{principal} $H$-\emph{bundle over }$B$ consists of a manifold $P$, a right $H$ action on $P$ along the anchor $a$, and an $H$-invariant map $P \xar{\pi} B$, such that the following holds: about every $b \in B$, there exist an open neighbourhood $U$, a section $\sigma:U \to P$ of $\pi$, and an $H$-equivariant diffeomorphism $\Phi$ fitting in the commutative diagram\footnote{Here the action of $H$ on $P|_U := \pi^{-1}(U) \subseteq P$ is induced by $H$ acting on $P$, according to Remark \ref{rem:3} (i); the set $P|_U$ is $H$-invariant by our assumption on $\pi$.}

  \begin{equation}
    \label{eq:1}
    \begin{tikzcd}
      \pi^{-1}(U) \ar[r, "\Phi"] \ar[d, "\pi" ] &U \fiber{a\sigma}{t} H \ar[dl, "\pr_1"] \\
      U.
    \end{tikzcd}
  \end{equation}
  In other words, $P|_U := \pi^{-1}(U)$ is isomorphic to the trivial $H$-bundle over $a\sigma:U \to H_0$.
  \eod
\end{definition}
For a Lie groupoid $G$, we can similarly define left $G$ actions, and left principal $G$-bundles.

\begin{remark}
\label{rem:6}
Other authors, such as \cite{L2010} and \cite{MM2003}, define a principal right $H$-bundle to be a manifold $P$ with right $H$ action, and an $H$-invariant surjective submersion $P \xar{\pi} B$, such the map
\begin{equation*}
      P \tensor[_a]{\times}{_t} H \to P \tensor[_\pi]{\times}{_\pi} P, \quad (p, h) \mapsto (p, p\cdot h)
\end{equation*}
is a diffeomorphism. This definition is equivalent to ours.
\eor
\end{remark}


\begin{example}
  \label{ex:4}
  Every trivial $H$-bundle over $\phi:B \to H_0$ is a principal $H$-bundle: take $\sigma: B \to B \tensor[_\phi]{\times}{_t} H$ to be $\sigma(b) := (b,1_{\phi(b)} )$, note that $a\sigma = \phi$, and take $\Phi$ to be the identity map on $B \fiber{\phi}{t} H$. In particular, $H \xar{t} H_0$, where $H$ acts on its arrows by right multiplication, is a principal $H$-bundle, with $\sigma(y) = 1_y$. If $H$ is a Lie group viewed as a Lie groupoid over a point, then a principal $H$-bundle agrees with the usual notion of a principal $H$ group bundle.
  \eoe
\end{example}

We can pull back principal $H$-bundles by smooth maps. We will need this construction to associate a bibundle to a smooth functor.
\begin{definition}[Pullback bundle]
  \label{def:13}
 The \define{pullback} of a principal right $H$-bundle $P \xar{\pi}  B$ by a smooth map $f:M \to B$, denoted $f^*(P \xar{\pi} B)$, is the principal right $H$-bundle consisting of the manifold $M \fiber{f}{\pi} P$, the right $H$ action along the anchor map $a \circ \pr_2$ given by $(m,p) \cdot h := (m, p \cdot h)$, and the projection $ M \fiber{f}{\pi} P \xar{\pr_1} M$.
  \eod
\end{definition}

Now we introduce bibundles, invertible bibundles, and locally invertible bibundles. 

\begin{definition}[Bibundle]
\label{def:14}
  Given Lie groupoids $G$ and $H$, a \emph{bibundle} $P:G \to H$ is a manifold $P$ equipped with a left $G$ action along $a$, and a right $H$ action along $a'$, such that
  \begin{itemize}
  \item $P \xar{a}  G_0$ is a principal right $H$ bundle,
  \item $a':P \to H_0$ is $G$-invariant,
  \item the actions of $G$ and $H$ commute.
  \end{itemize}
  A \emph{bibundle morphism} from a bibundle $P$ to a bibundle $Q$ is a smooth map $\alpha:P \to Q$ that commutes with the structure maps and is $G\times H$ equivariant.

    A bibundle $P:G \to H$ is \emph{invertible} or \define{biprincipal} if both $a$ and $a'$ are principal bundles. We say Lie groupoids $G$ and $H$ are \emph{Morita equivalent} if there is an invertible bibundle between them.

  \eod
\end{definition}
If $P:G \to H$ is invertible, then by swapping the $G$ and $H$ actions, we get a bibundle $P^{-1}:H \to G$. The compositions $P^{-1}P$ and $PP^{-1}$ are isomorphic to $\id_G$ and $\id_H$, respectively.
\begin{remark}
  \label{rem:7}
  Every bibundle morphism is a bibundle isomorphism. For details, see \cite[Remark 5.34(5)]{MM2003} or \cite[Remark 3.32]{L2010}.
\eor
\end{remark}
The diagram below depicts a bibundle.
\begin{equation*}
  \begin{tikzcd}
      G \circlearrowright &  P  \ar[dl, "a"'] \ar[dr, "a'"] & \circlearrowleft H \\
      G_0 & &H_0.
  \end{tikzcd}
\end{equation*}

\begin{example}
  \label{ex:5}
  The identity bibundle from a Lie groupoid $G$ to itself, $\id_G:G \to G$, is given by $P = G$, with the left and right actions of $G$ on itself. It is invertible.

  For a more sophisticated example, consider a left Lie group action $G \circlearrowright M$ and a right Lie group action $N \circlearrowleft H$. An bibundle $P: G \ltimes M\to N \rtimes H$ between the action groupoids is given by the diagram
  \begin{equation*}
    \begin{tikzcd}
      & G \circlearrowright P \circlearrowleft H \ar[dl, "a"'] \ar[dr, "a'"]& \\
      G \circlearrowright M & & N \circlearrowleft H,
    \end{tikzcd}
  \end{equation*}
  where $a$ is a principal $H$-bundle, and is $G$-equivariant, and $a'$ is $G$-invariant, and is $H$-equivariant, and the $G$ and $H$ actions on $P$ commute. This bibundle is biprincipal if $a'$ is a principal $G$-bundle.
  \eoe
\end{example}

Bibundles can be composed: for bibundles $P:G \to H$ and $Q:H \to K$, with left and right anchors $a, a'$ and $b, b'$, respectively, the composition $Q \circ P$ is $(P \times_{H_0}Q) /H$ (where the $H$ action on the fibered product $P \fiber{}{H_0} Q$ is $(p,q)\cdot h = (p \cdot h, h^{-1} \cdot q)$), with left and right anchors $\alpha([p, q]) = a(p)$ and $\beta'([p, q]) = b'(q)$. In a diagram,
  \begin{equation*}
    \begin{tikzcd}[sep=small]
      &  & (P \times_{H_0} Q) /H \ar[ddll, "\alpha"', bend right] \ar[ddrr, "\beta'", bend left] &  &  \\
      & P \ar[dl, "a"'] \ar[dr, "a'"] &  &Q \ar[dl, "b"'] \ar[dr, "b'"] &  \\
      G_0 &  & H_0  &  & K_0 
    \end{tikzcd}
  \end{equation*}
See \cite[Remark 3.30]{L2010} for details. However, composition of bibundles is not associative. Instead, it is associative up to bibundle isomorphism, so we have a bicategory:

\begin{definition}[Bicategory of Lie groupoids]
  \label{def:15}
  The bicategory $\bi$ has objects Lie groupoids, arrows bibundles, and 2-arrows morphisms of bibundles.
  \eod
\end{definition}

\begin{remark}
  \label{rem:8}
By identifying isomorphic bibundles, we obtain the Hilsum-Skandalis category $\mbf{HS}$. Its objects are Lie groupoids, and its arrows are isomorphism classes of bibundles.
  \eor
\end{remark}

\begin{example}
  \label{ex:6}
Every smooth functor of Lie groupoids induces a bibundle. Indeed, given a smooth functor $F:G \to H$ with $F_0:G_0 \to H_0$, the principal $H$-bundle $F_0^*(H \xar{t} H_0)$ is naturally a bibundle $G \to H$, which we denote $\langle F\rangle$. The groupoid $G$ acts along the anchor map $\pr_1$, with multiplication $g \cdot (x, h) := (t(g), F(g)h)$.  The map $F \mapsto \langle F \rangle$ respects composition in the sense that $\langle F \circ G \rangle \cong \langle F \rangle \circ \langle G \rangle$. 

A smooth natural transformation between smooth functors $\alpha:F \to G$ gives rise to a bibundle morphism $\langle \alpha \rangle: \langle F \rangle \to \langle G\rangle$. This bibundle morphism is an isomorphism by Remark \ref{rem:7}.
\eoe
\end{example}

We now introduce the notion of the restriction of a bibundle, and then of a locally invertible bibundle.

\begin{definition}
  \label{def:16}
    If $P:G \to H$ is a bibundle as in Definition \ref{def:14}, and $U \subseteq G_0$ and $V \subseteq H_0$ are open subsets such that $a^{-1}(U) \cap (a')^{-1}(V)$, with the actions of $G|_U$ and $H|_V$ described by Remark \ref{rem:3}, is a bibundle, then we call this bibundle the \define{restriction} of $P$ to $G|_U$ and $H|_V$, and denote it by $P|_U^V$.
    \eod
\end{definition}

\begin{lemma}
  \label{lem:3}
  If $G$ and $H$ are Lie groupoids, $P:G \to H$ is a bibundle, and $U \subseteq G_0$ and $V \subseteq H_0$ are open subsets, and $Q:G|_U \to H|_V$ is a bibundle such that the diagram below commutes (up to isomorphism of bibundles)
  \begin{equation}
    \label{eq:2}
    \begin{tikzcd}
      G|_U \ar[r, "Q"] \ar[d, "{\langle \iota_U \rangle}"] & H|_V \ar[d, "{\langle \iota_V \rangle}"] \\
      G \ar[r, "P"] & H,
    \end{tikzcd}
  \end{equation}
  then $P|_U^V$ exists and is isomorphic to $Q$ (here $\iota_U$ is the inclusion functor). Conversely, if $P|_U^V$ exists, then the diagram \eqref{eq:2} commutes with $Q := P|_U^V$.
\end{lemma}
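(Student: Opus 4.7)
The plan is to compute both bibundle compositions in diagram \eqref{eq:2} directly and identify $P|_U^V$ as the ``sub-bibundle'' of $P \circ \langle \iota_U \rangle$ whose right $H$-anchor lies in $V$. First, I would unpack the compositions. Every orbit in $P \circ \langle \iota_U \rangle = (t^{-1}(U) \fiber{s}{a} P)/G$ contains a unique representative of the form $(1_x, p)$ with $x \in U$ and $a(p) = x$, giving a canonical bibundle isomorphism $P \circ \langle \iota_U \rangle \cong a^{-1}(U) = P|_U$ with the actions restricted from $P$ (cf.\ Remark \ref{rem:3}). Similarly, $\langle \iota_V \rangle \circ Q = (Q \fiber{b'}{t} t^{-1}(V))/H|_V$, and there is a canonical $G|_U$-, $H|_V$-equivariant embedding $\iota_Q : Q \hookrightarrow \langle \iota_V \rangle \circ Q$ sending $q \mapsto [q, 1_{b'(q)}]$. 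Its image is exactly the locus where the right $H$-anchor $[q, h] \mapsto s(h)$ lands in $V$: given such a class, one uses the $H|_V$-action with $k = h$ to bring it to the canonical form.

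For the forward direction, given a bibundle isomorphism $\Phi: \langle \iota_V \rangle \circ Q \to P \circ \langle \iota_U \rangle$, compatibility with the right $H$-anchors forces $\Phi$ to restrict to a bijection from $\iota_Q(Q)$ onto $a^{-1}(U) \cap (a')^{-1}(V) \subseteq P|_U$. The latter is open in $P|_U$ (since $V$ is open in $H_0$), hence a smooth submanifold, and the restriction of $\Phi$ is a diffeomorphism between submanifolds. Transporting the principal $H|_V$-bundle structure of $Q$ across this diffeomorphism, one sees that $a^{-1}(U) \cap (a')^{-1}(V)$, equipped with the restrictions of the $G$- and $H$-actions on $P$ (these agree with the transported actions by equivariance of $\Phi$), is a bibundle $G|_U \to H|_V$. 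This is exactly $P|_U^V$, and the restriction of $\Phi$ identifies it with $Q$.

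For the converse direction, suppose $P|_U^V$ exists as a bibundle. I would define $\Psi : \langle \iota_V \rangle \circ P|_U^V \to P \circ \langle \iota_U \rangle$ by $[p, h] \mapsto p \cdot h$. This is well-defined on the quotient since $(p \cdot k) \cdot (k^{-1} h) = p \cdot h$ for $k \in H|_V$, it is smooth as the descent of the $H$-multiplication map, and a direct check shows that it respects both anchors and is equivariant for both groupoid actions. Thus $\Psi$ is a bibundle morphism, and by Remark \ref{rem:7} it is automatically a bibundle isomorphism. For concreteness, injectivity comes from the right $H$-action on $P$ being free on $a$-fibers, and surjectivity from $a: P|_U^V \to U$ being surjective: any $p \in a^{-1}(U)$ can be written as $p' \cdot h$ for some $p' \in P|_U^V$ above $a(p)$, with the unique $h \in H$ satisfying $t(h) = a'(p') \in V$.

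The main technical points I anticipate are verifying smoothness of the inverse in the converse direction and establishing principality of the inherited $H|_V$-action in the forward direction. Both are handled painlessly by the bibundle-theoretic machinery already in place: Remark \ref{rem:7} promotes the morphism $\Psi$ to an isomorphism automatically, while in the forward direction the principal $H|_V$-bundle structure on $a^{-1}(U) \cap (a')^{-1}(V)$ is transferred from $Q$ through the equivariant diffeomorphism induced by $\Phi$.
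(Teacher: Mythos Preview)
Your proposal is correct and follows essentially the same route as the paper. Both arguments identify $P \circ \langle \iota_U \rangle$ with $a^{-1}(U)$ via $[g,p] \mapsto g\cdot p$, and in the forward direction both show that the given isomorphism carries the canonical copy of $Q$ (via $q \mapsto [q,1_{b'(q)}]$) onto $a^{-1}(U)\cap (a')^{-1}(V)$, using compatibility with the right anchor. Your converse is a slight streamlining of the paper's: you build the morphism in the easy direction $\langle \iota_V\rangle \circ P|_U^V \to P \circ \langle \iota_U\rangle$ by $[p,h]\mapsto p\cdot h$ and invoke Remark~\ref{rem:7}, whereas the paper constructs the inverse directly by choosing a local section of $a\colon P|_U^V \to U$ and checking independence of that choice. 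The content is the same.
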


\begin{proof}
  By definition, the bibundle $\langle \iota_U \rangle$ is $U \fiber{\iota_U}{t} G$, which we identify with $t^{-1}(U)$ via $(x,g) \mapsto g$. The $G|_U$ action is by left multiplication. Then
  \begin{equation*}
  P \circ \langle \iota|_U\rangle = (t^{-1}(U) \fiber{s}{a} P)/G  
  \end{equation*}
  We denote its elements by $[g, p]$.  Similarly, we identify the bibundle $\langle \iota_V \rangle$ with $t^{-1}(V)$, and find that
  \begin{equation*}
  \langle \iota_V \rangle \circ Q=  (Q \fiber{b'}{t} t^{-1}(V))/H|_V,
  \end{equation*}
 where $b'$ is the right anchor for $Q$. We denote its elements by $[q, h]$.

  Denote an isomorphism $P \circ \langle \iota_U \rangle \leftrightarrow \langle \iota_V \rangle \circ Q$ element-wise by $[g,p] \leftrightarrow [q,h]$. Then, define a map $Q \to P$ by $q \mapsto g \cdot p$, where $[q, 1_{b'(q)}] \leftrightarrow [g,p]$ under the assumed isomorphism. This is well-defined because $[g_1,p_1] = [g_2,p_2]$ implies $g_1\cdot p_1 = g_2\cdot p_2$. Now, notice that $a(g \cdot p) = t(g) \in U$, and
  \begin{equation*}
  a'(g \cdot p) = a'(p) = b'(q)\in V,   
\end{equation*}
where the last equality follows from the requirement that an isomorphism of bibundles commutes with the anchor maps. So we have defined a map $Q \to P|_U^V$. Its inverse is the map $p \mapsto q \cdot h$, where $[1_{a(p)}, p] \leftrightarrow [q,h]$. Both these maps are smooth, and furthermore they are $G|_U \times H|_V$-equivariant. This shows that $P|_U^V$ is a bibundle, and is isomorphic to $Q$.

  For the converse, we define the diffeomorphism $P \circ \langle \iota_U \rangle \to \langle \iota_V \rangle \circ P|_U^V$ as follows. Given $[g,p]$, denote $p_* := g\cdot p$, and note that $a(p_*) = t(g) \in U$. Now take a local section $\sigma:U \to P|_U^V$ of $a$ about $a(p_*)$, which is possible because $P|_U^V \xar{a} U$ is principal by assumption. Then $q := \sigma(a(p_*))$ and $p_*$ are in the same $a$-fiber, hence, by principality of the $H$ action, there is a unique $h \in H$ (depending smoothly on $p$ and $g$; see Remark \ref{rem:6}) such that $q \cdot h = p_*$. Necessarily $t(h) = a'(q) \in V$. We claim the desired diffeomorphism is $[g,p] \mapsto [q,h]$.
  
We show this is well-defined. Suppose $[g_1,p_1] = [g_2,p_2]$. Then, as above, we have $g_1 \cdot p_1 = g_2 \cdot p_2 =: p_*$, so we only need to show the definition is independent of the section $\sigma$. Suppose $\sigma_i:U \to P|_U^V$ are two local sections of $a$ about $a(p_*)$. Set $q_i := \sigma_i(a(p_*))$, and set $h_i \in H$ to be the unique arrows such that $q_i \cdot h_i = p_*$.  Then both $q_i \cdot h_i$ are in the same $a$-fiber, so there is some unique $\tilde{h} \in H$ such that $q_2 = q_1 \cdot \tilde{h}$.  Then $q_2 \cdot h_2 = q_2 \cdot (\tilde{h}h_2)$, and uniqueness of $h_1$ means $\tilde{h}h_2 = h_1$. It follows that $[q_2,h_2] = [q_1 \cdot \tilde{h}, \tilde{h}^{-1}h_1] = [q_1,h_1]$, as required.

We leave it to the reader to check that this map is $G|_U \times H$-equivariant. It is also smooth: the maps $(g,p) \mapsto (\sigma(a(p_*)), h)$ provide local liftings. Finally, it is a bibundle isomorphism by Remark \ref{rem:7}.
\end{proof}

\begin{definition}[Locally invertible bibundles]
  \label{def:17}
  A bibundle $P:G \to H$ is \emph{locally invertible} if, about every $x \in G_0$ and $y \in a'(a^{-1}(x))$, there are open neighbourhoods $U$ of $x$ and $V$ of $y$, such that the restriction $P|_U^V:G|_U \to H|_V$ is well-defined and is invertible.
  \eod
\end{definition}
 The composition of invertible bibundles is invertible, and the composition of locally invertible bibundles is locally invertible, by Lemma \ref{lem:3}. Our main source of locally invertible bibundles comes from certain smooth functors $F:G \to H$.

\begin{lemma}
  \label{lem:4}
  Suppose $F:G \to H$ is a smooth functor such that, for every $x \in G_0$, there exist open neighbourhoods $U$ of $x$ and $V$ of $F(x)$ such that the restriction $F|^U_V:G|_U \to H|_V$ is an isomorphism. Then $\langle F \rangle$ is a locally invertible bibundle.
\end{lemma}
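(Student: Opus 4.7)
The plan is to reduce to the case $y = F_0(x)$ by absorbing the passage from $y$ to $F_0(x)$ into a modification of $F$ by a local bisection of $H$ through a chosen arrow $h_0 \colon y \to F_0(x)$.

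Fix $x \in G_0$ and $y \in a'(a^{-1}(x))$, and pick $h_0 \in H$ with $t(h_0) = F_0(x)$ and $s(h_0) = y$. First I would construct a local bisection through $h_0$: an open neighbourhood $V_1 \subseteq H_0$ of $F_0(x)$ and a smooth map $\sigma \colon V_1 \to H$ with $t \circ \sigma = \id_{V_1}$, $\sigma(F_0(x)) = h_0$, and $s \circ \sigma \colon V_1 \to V_1'$ a diffeomorphism onto an open neighbourhood $V_1'$ of $y$. Since $s$ and $t$ are submersions, one picks a $\dim H_0$-dimensional subspace of $T_{h_0} H$ meeting both $\ker ds|_{h_0}$ and $\ker dt|_{h_0}$ trivially and applies the implicit function theorem; this is a standard Lie-groupoid construction.

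Next, using the hypothesis on $F$, pick open $U_0 \ni x$ and $V_0 \ni F_0(x)$ with $F|^{U_0}_{V_0}$ an isomorphism, and shrink to $U := U_0 \cap F_0^{-1}(V_0 \cap V_1)$ and $V := F_0(U) = V_0 \cap V_1$, so that $F|^U_V$ remains an isomorphism and $F_0(U) \subseteq V_1$. Set $V' := (s\sigma)(V)$, an open neighbourhood of $y$, and define a smooth functor $F' \colon G|_U \to H|_{V'}$ by
\[
F'_0(z) := (s\sigma)(F_0(z)), \qquad F'(g) := \sigma(F_0(t(g)))^{-1} \cdot F(g) \cdot \sigma(F_0(s(g))).
\]
The functor $F'$ decomposes as $C_\sigma \circ F|^U_V$, where $C_\sigma \colon H|_V \to H|_{V'}$ is the conjugation isomorphism induced by $\sigma$ via the bisection calculus. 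Both factors are isomorphisms of Lie groupoids, so $F'$ is, and hence $\langle F' \rangle$ is an invertible bibundle $G|_U \to H|_{V'}$.

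Finally, the arrows $\eta_z := \sigma(F_0(z)) \colon F'_0(z) \to F_0(z)$ assemble into a natural isomorphism $\eta \colon \iota_{V'} \circ F' \Rightarrow F \circ \iota_U$ of functors $G|_U \to H$; naturality, $F(g) \sigma(F_0(s(g))) = \sigma(F_0(t(g))) F'(g)$, is immediate from the definition of $F'$. Since natural isomorphisms give isomorphisms of the associated bibundles (Example \ref{ex:6}) and the map $F \mapsto \langle F \rangle$ respects composition, the square
\[
\begin{tikzcd}
G|_U \ar[r, "{\langle F' \rangle}"] \ar[d, "{\langle \iota_U \rangle}"'] & H|_{V'} \ar[d, "{\langle \iota_{V'} \rangle}"] \\
G \ar[r, "{\langle F \rangle}"'] & H
\end{tikzcd}
\]
commutes up to bibundle isomorphism. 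Lemma \ref{lem:3} then yields that $\langle F \rangle|_U^{V'}$ exists and is isomorphic to $\langle F' \rangle$, hence is invertible, verifying Definition \ref{def:17} at $(x,y)$. The main obstacle I anticipate is constructing the bisection through an arbitrary $h_0$; after that, the remaining steps are essentially bookkeeping using Lemma \ref{lem:3} and the functoriality of $F \mapsto \langle F \rangle$.
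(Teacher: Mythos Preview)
Your proof is correct and shares the paper's core strategy: use Lemma~\ref{lem:3} to identify a restriction of $\langle F\rangle$ with the bibundle of a locally defined functor that is an isomorphism of Lie groupoids, and then use that an isomorphism of Lie groupoids yields an invertible bibundle. The paper carries out exactly these two steps, but only at the point $y = F_0(x)$: it takes the $U,V$ supplied by the hypothesis, observes that $\langle F|_U^V\rangle \cong \langle F\rangle|_U^V$ via Lemma~\ref{lem:3}, and then gives an explicit trivialization showing that $\langle F\rangle$ is biprincipal whenever $F$ itself is an isomorphism.

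Your contribution beyond this is the bisection step handling a general $y \in a'(a^{-1}(x))$, which Definition~\ref{def:17} literally demands and which the paper's proof does not address. So your argument is a refinement rather than a different route: you conjugate to obtain $F' = C_\sigma \circ F|_U^V$ landing in $H|_{V'}$ with $V'\ni y$, produce the natural isomorphism $\eta$, and then invoke the same Lemma~\ref{lem:3}. Conversely, the paper supplies what you cite as known, namely the explicit check (with formulas for $\Phi$ and $\Phi^{-1}$) that $\langle F\rangle$ is biprincipal when $F$ is an isomorphism. Each approach fills in what the other takes for granted: yours is more careful about the quantifier over $y$; the paper's is more self-contained about why isomorphisms give invertible bibundles. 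Your worry about constructing the bisection through $h_0$ is unfounded; the transversality argument you sketch is standard and works in any Lie groupoid.
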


\begin{proof}
  Take $F:G \to H$ as given in the setup. We will show that $\langle F \rangle|^U_V$ is invertible. First, observe that setting $Q := \langle F|_U^V \rangle$ and $P := \langle F \rangle$ makes diagram \eqref{eq:2} commute, so by Lemma \ref{lem:3}, we see that $\langle F|_U^V \rangle$ is isomorphic to $\langle F \rangle|_U^V$. Therefore it suffices to show $\langle F|^U_V \rangle$ is invertible.

Since $F|_U^V$ is an isomorphism, we simplify notation and assume $F$ is an isomorphism, and prove $\langle F \rangle$ is invertible. In other words, we must show $G_0 \fiber{F}{t} H \xar{s \circ \pr_2} H_0$ is a left $G$-principal bundle. In fact, we show it is isomorphic to a trivial left $G$-bundle. Take the section $\sigma$ of $s \circ \pr_2$ defined by $\sigma(y) := (F^{-1}(y), 1_y)$, and define $\Phi$, in the diagram below
  \begin{equation*}
    \begin{tikzcd}
      G \fiber{s}{\pr_1\sigma} H_0 \ar[r, "\Phi"] \ar[dr, "\pr_2"] & G_0 \fiber{F}{t} H \ar[d, "s \pr_2"] \\
      & H_0
    \end{tikzcd}
  \end{equation*}
 by
  \begin{align*}
    \Phi(g,y) := (t(g), F(g)), \quad \Phi^{-1}(x,h) = (F^{-1}(h), s(h)).
  \end{align*}
Unwinding the definitions yields that these are inverses of each other, and that they are $G$-equivariant, is a matter of unwinding the definitions.
  \end{proof}

  We end this subsection by describing the quotient functor $\mbf{F}:\bi \to \diffeol$. This functor, and parts of the following proposition, have appeared elsewhere in the literature, for instance in \cite[Lemma 6.5]{W2017}. We give the entire argument for completeness; the clause concerning locally invertible bibundles is new.

\begin{proposition}
\label{prop:1}
Suppose $P:G \to H$ is a bibundle. There is a unique map $|P|:|G| \to |H|$ for which the diagram below commutes
  \begin{equation}
    \label{eq:3}
    \begin{tikzcd}[sep=small]
      & P \ar[dl, "a"'] \ar[dr, "a'"] &  \\
      G_0 \ar[d, "\pi"'] &  & H_0 \ar[d, "\pi'"] \\
      {|G|} \ar[rr, "{|P|}"] &  & {|H|}.
    \end{tikzcd}
  \end{equation}
  The map $|P|$ is diffeologically smooth. If $\id_G:G \to G$ is the identity bibundle, then $|\id_G| = \id_{|G|}$. If $Q:H \to K$ is another bibundle, then $|Q \circ P| = |Q| \circ |P|$. If $P, P':G \to H$ are isomorphic, then $|P| = |P'|$. Hence $\mathbf{F}:\bi \to \diffeol$, which takes an object $G \rra G_0$ to $|G|$, takes an arrow $P$ to $|P|$, and takes a 2-arrow $\alpha :P \to Q$ to $1_{|P|}$, is a well-defined functor of bicategories.

  If $P$ is invertible, then $|P|$ is a diffeomorphism. If $P$ is locally invertible, then $|P|$ is a local diffeomorphism.
\end{proposition}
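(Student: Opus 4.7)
My plan is to build $|P|$ pointwise, verify smoothness via principal-bundle trivializations, check functoriality by direct inspection, and then handle the (locally) invertible case by reducing to $\mathbf F$ applied to the inverse bibundle or to $P|_U^V$ via Lemma \ref{lem:3}.

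First, I would define $|P|([x]) := [a'(p)]$ for any representative $x \in G_0$ of $[x]$ and any $p \in a^{-1}(x)$. This is well-defined because the fibers of the principal bundle $a$ are single $H$-orbits (so $a'$ is constant on them modulo $H$-equivalence), and any two representatives $x_1 \sim_G x_2$ are related by an arrow $g$ whose left $G$-action carries $a^{-1}(x_1)$ to $a^{-1}(x_2)$ preserving $a'$. For smoothness, any plot $p : \mathsf U \to |G|$ lifts locally to a smooth $\tilde p : \mathsf V \to G_0$ by the quotient diffeology; composing with a local section $\sigma$ of $a$ (which exists by Definition \ref{def:12}) gives $\sigma \circ \tilde p : \mathsf V' \to P$, and $\pi' \circ a' \circ \sigma \circ \tilde p$ is a local lift of $|P| \circ p$ to $|H|$. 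Functoriality and 2-arrow invariance follow from tracing definitions: a bibundle morphism $\alpha : P \to P'$ intertwines both anchors, so $|P'|([x]) = [a'_{P'}(\alpha(p))] = [a'_P(p)] = |P|([x])$. From these formal properties the invertible case is immediate: if $P^{-1}$ inverts $P$, then $|P^{-1}| \circ |P| = |P^{-1} \circ P| = |\mathrm{id}_G| = \mathrm{id}_{|G|}$, and symmetrically in the other order, so $|P|$ is a diffeomorphism.

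The main new ingredient is the locally invertible case. Given $[x_0] \in |G|$, choose a representative $x_0$ and some $y_0 \in H_0$ with $|P|([x_0]) = [y_0]$; Definition \ref{def:17} supplies open $U \ni x_0$ and $V \ni y_0$ such that $P|_U^V$ is invertible. Set $\tilde U := \pi(U)$ and $\tilde V := \pi'(V)$, which are D-open because $\pi^{-1}(\tilde U) = s(t^{-1}(U))$ is open (using that $s$ is a submersion and hence an open map). I would then identify $|G|_U|$ (with the quotient diffeology from $U$) canonically with $\tilde U$ (with the subspace diffeology from $|G|$). Bijectivity is automatic because any $G$-arrow with source and target in $U$ automatically lies in $G|_U = s^{-1}(U) \cap t^{-1}(U)$; for the diffeological identification, the nontrivial direction takes a plot of $\tilde U$, pulls it back to a local smooth lift $\tilde p$ into $G_0$ (possibly leaving $U$), and uses a local section of $s$ through a suitably chosen arrow $g_0$ with $t(g_0) \in U$ to produce $t \circ \sigma \circ \tilde p$, a local lift landing in $U$. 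The same identification holds for $|H|_V|$ and $\tilde V$. Applying the invertible case to $P|_U^V$ yields a diffeomorphism $|P|_U^V| : |G|_U| \to |H|_V|$, and functoriality of $\mathbf F$ applied to the commuting square of Lemma \ref{lem:3} identifies this with $|P|$ restricted to $\tilde U \to \tilde V$, so $|P|$ is a local diffeomorphism.

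The step I expect to be the main obstacle is the diffeological identification $|G|_U| \cong \tilde U$: the set-level bijection is automatic from the definition of the pullback groupoid, but matching the two diffeologies requires the local-lifting argument that exploits submersivity of $s$ to replace an arbitrary lift into $G_0$ with one inside $U$.
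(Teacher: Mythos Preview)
Your proposal is correct and follows essentially the same approach as the paper. The one organizational difference is that the diffeological identification $|G|_U| \cong \pi(U)$ that you flag as ``the main obstacle'' is exactly what the paper isolates as a separate lemma (Lemma~\ref{lem:5}) and proves by the same section-of-$s$ trick you sketch; the paper then simply cites that lemma in the locally invertible step rather than inlining the argument.
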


\begin{proof}
  Because the given diffeology on $|G|$, which is the quotient diffeology induced by $\pi$, coincides with the quotient diffeology that is induced by $\pi a$, if $|P|$ is well-defined, then $|P| \pi a = \pi' a'$ implies that $|P|$ is smooth and uniquely determined by $a$ and $a'$: necessarily, $|P|$ is given by
  \begin{equation*}
    [x] \mapsto [a'(p)], \text{ where } a(p) = x.
  \end{equation*}
 We now check this is well-defined. Suppose $g:x \mapsto y$, and $a(p) = x$ and $a(q) = y$. We find an arrow in $H$ taking $a'(q)$ to $a'(p)$. First, $s(g) = x = a(p)$, so $g \cdot p$ is well-defined. Applying $a$ gives
  \begin{equation*}
    a(g\cdot p) = t(g) = y = a(q).  
  \end{equation*}
  Because $P \xar{a} G_0$ is $H$ principal, we can find (unique) arrow $h$ such that $(g \cdot p) \cdot h = q$. The fact the action is well-defined means $t(h) = a'(g\cdot p)$, and this is $a'(p)$ by $G$-invariance of $a'$. On the other hand,
  \begin{equation*}
    a'(q) = a'(g\cdot p \cdot h) = s(h),
  \end{equation*}
  so $h$ is the desired arrow.

  Now, suppose $P:G \to H$ and $Q:H \to K$ are bibundles, with left and right anchors $a, a'$ and $b, b'$, respectively. Recall that composition $Q \circ P$ is given by the diagram
  \begin{equation*}
    \begin{tikzcd}[sep=small]
      &  & P \times_{H_0} Q /H \ar[ddll, "\alpha"', bend right] \ar[ddrr, "\beta'", bend left] &  &  \\
      & P \ar[dl, "a"'] \ar[dr, "a'"] &  &Q \ar[dl, "b"'] \ar[dr, "b'"] &  \\
      G_0 \ar[d] &  & H_0 \ar[d] &  & K_0 \ar[d]  \\
      {|G|} \ar[rr, "{|P|}"] \ar[rrrr, bend right, "{|Q \circ P|}", dashed] &  & {|H|} \ar[rr, "{|Q|}"] &  &  {|K|}
    \end{tikzcd}
  \end{equation*}
  Choose $x\in G_0$, then choose $p \in P$ such that $a(p) = x$, then choose $q$ such that $b(q) = a'(p)$. Then
  \begin{equation*}
    |Q| \circ |P|([x]) = Q([a'(p)]) = [b'(q)] 
  \end{equation*}
  But we can now pick $[p,q] \in Q \circ P$, where $\alpha([p,q]) = x$ and $\beta'([p,q]) = b'(q)$. Then $|Q \circ P|([x]) = [b'(q)]$ too, as required.

  Suppose $\alpha:P \to Q$ is an isomorphism of bibundles. Fix $a(p) =x$, so that $|P|([x]) = [a'(p)]$. Then $b(\alpha(p)) = x$, so $|Q|([x]) = [b'(\alpha(p))]$. But $b'\alpha = a'$, so $|Q|([x]) = [a'(p)] = |P|([x])$. 

  If $P$ is invertible, the inverse of $|P|$ is $|P^{-1}|$. If $P$ is locally invertible, for $x \in G_0$ and $y \in a'(a^{-1}(x))$, take neighbourhoods $U$ of $x$ and $V$ of $y$ such that $P|_U^V$ is invertible. By Lemma \ref{lem:3} we have the diagram \eqref{eq:2}, and passing to the quotient yields the diagram
  \begin{equation*}
    \begin{tikzcd}
      {|U|} \ar[d] \ar[r, "{|P|_U^V|}"] & {|V|} \ar[d] \\
      {|G|} \ar[r, "{|P|}"] & {|H|}.
    \end{tikzcd}
  \end{equation*}
 See Lemma \ref{lem:5} for the identification of $|U| = U/G|_U$ with an open subset of $|G|$. The vertical arrows are inclusions, and therefore the top arrow is $|P|$ restricted to a map $|U| \to |V|$. But the top arrow is also a diffeomorphism because $P|_U^V$ is invertible, so we conclude $|P|$ is a local diffeomorphism. 
\end{proof}

\subsection{Quasifold groupoids}
\label{sec:quasifold-groupoids}
Now we introduce quasifold groupoids. Our definition has not previously appeared in the literature.

\begin{definition}
\label{def:18}
  A $n$-\emph{quasifold groupoid} is a Lie groupoid $G \rra G_0$, with Hausdorff arrow space, such that: for each $x \in G_0$, there is an open neighbourhood $U$ of $x$, a countable group $\Gamma$ acting affinely on $\R^n$, an open subset $\msf{V}$ of $\R^n$, and an isomorphism of Lie groupoids $F: G|_U \to (\Gamma \ltimes \R^n)|_{\msf{V}}$. We call a collection $\cl A = \{F:G|_U \to (\Gamma \ltimes \R^n)|_{\msf{V}}\}$ of such Lie groupoid isomorphisms such that the $U$ cover $G_0$ a \emph{(quasifold) atlas} for $G$. We call a groupoid of the form $(\Gamma \ltimes \R^n)|_{\msf{V}}$ a \emph{model quasifold groupoid}.
  \eod
\end{definition}

\begin{remark}
  \label{rmk:2}
  In this definition, we do not assume $\Gamma$ acts effectively on $\R^n$ (i.e.\ that the action homomorphism $\Gamma \to \Aff(\R^n)$ is injective), nor do we assume that $\msf{V}$ is $\Gamma$-invariant.
\eor

\end{remark}

  \begin{remark}
    \label{rem:9}
    Like with diffeological quasifolds, our notion of quasifold groupoid is \emph{local}, meaning that, for a quasifold groupoid $G$, for each point $x$, and each neighbourhood $U'$ of $x$, there is an open neighbourhood $U$ of $x$ contained in $U'$ such that $G|_U$ is isomorphic to a model quasifold groupoid $(\Gamma \ltimes \R^n)|_{\msf{V}}$. We do not know if, in general, these models can always be chosen with $\msf{V}$ being $\Gamma$-invariant. We do know this is the case if all the groups $\Gamma$ are finite (meaning the quasifold is an orbifold, c.f.\ Example \ref{ex:8}).
    \eor
  \end{remark}

\begin{definition}
  \label{def:19}
The category $\qfoldgrpd$ is the sub-bicategory of $\bi$ whose objects are quasifold groupoids. Taking only the locally invertible bibundles as arrows, we get the bicategory $\qfoldgrpd^{\text{loc-iso}}$.
  \eod
\end{definition}

\begin{example}
  \label{ex:7}
  In the setting of Example \ref{ex:1}, the action groupoids $(\Z + \alpha \Z) \ltimes \R$ for irrational tori are quasifold groupoids. Combining our main Theorem \ref{thm:1} with Iglesias and Donato's result in \cite{DIZ1985}, we recover the fact that two such action groupoids are Morita equivalent if and only if $\alpha$ and $\beta$ are related by a fractional linear transformation with integer coefficients.
  \eoe
\end{example}

\begin{example}
  \label{ex:8}
  Taking the groups $\Gamma$ to be finite subgroups of $\GL(\R^n)$, we call the resulting quasifold groupoid an \emph{orbifold} groupoid. Kozsul's slice theorem \cite{K1953} implies that an \'{e}tale Lie groupoid is an orbifold groupoid if and only if it is locally proper.\footnote{A Lie groupoid is \define{locally proper} if about every $x \in G_0$, there is a neighbourhood $U$ such that $G|_U$ is proper.} For the argument, see \cite[Proposition 5.30]{MM2003}. Many authors represent orbifolds by \'{e}tale, proper Lie groupoids. In contrast, our orbifold groupoids need not be globally proper. The weighted non-singular branched manifolds of \cite{McDuff-2006-book}, which come from Kuranishi atlases, provide examples of locally proper Lie groupoids that are not \emph{a priori} proper.
\eoe
\end{example}

To show the quotient functor $\mbf{F}:\bi \to \diffeol$ restricts to a functor $\qfoldgrpd \to \diffeolqfold$, we require the following technical lemma. 

\begin{lemma}
  \label{lem:5}
 Fix a Lie groupoid $G \rra G_0$ and an open subset $U \subseteq G_0$. In the following diagram, the bottom inclusion map is a diffeomorphism with its image, $\pi(U)$, where $\pi(U)$ carries the subset diffeology induced from $G_0/G$.
    \begin{equation*}
    \begin{tikzcd}
      (U, \text{ subset diffeol.})  \ar[d, "\pi_U"] \ar[r, hook]  & G_0 \ar[d, "\pi"]
     \\
      (U/G|_U, \text{ quotient diffeol.}) \ar[r, hook] & (G_0/G, \text{ quotient diffeol.}).
    \end{tikzcd}
  \end{equation*}
  In particular, if $\Gamma$ is a Lie group acting smoothly on a manifold $M$, and $\pi:M \to M/\Gamma$ is the quotient map, and $V$ is an open subset of $M$, then the quotient diffeology on $\pi(V)$ induced from $V$ coincides with the subset diffeology induced from $M/\Gamma$.
\end{lemma}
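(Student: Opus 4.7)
The plan is to check two things: first, that the induced inclusion $\bar\iota: U/G|_U \to G_0/G$ is a well-defined bijection onto $\pi(U)$; and second, that under this identification, the quotient diffeology (coming from $U \to U/G|_U$) and the subset diffeology (inherited from $G_0/G$) on $\pi(U)$ coincide. The former is immediate from the groupoid structure, while the latter is the substantive content.

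For the bijection onto $\pi(U)$: if $x,y \in U$ are in the same $G|_U$-orbit, then they are evidently in the same $G$-orbit, so $\bar\iota$ is well-defined. Conversely, if there is an arrow $g \in G$ with $s(g) = x \in U$ and $t(g) = y \in U$, then $g \in G|_U$ by the definition of the pullback groupoid, so $\bar\iota$ is injective. Its image is manifestly $\pi(U)$.

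For the diffeologies, one containment is easy: a plot $p: \msf U \to \pi(U)$ for the quotient diffeology is, locally, of the form $\pi_U \circ q$ for some smooth $q: \msf V \to U$. Post-composing with $\bar\iota$ gives $\pi \circ q$, which is a plot of $G_0/G$ landing in $\pi(U)$, hence a plot of the subset diffeology by definition.

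The main obstacle is the converse: given a plot $p: \msf U \to \pi(U)$ for the subset diffeology, we must produce local lifts $q: \msf V \to U$ with $\pi_U \circ q = p|_{\msf V}$. By definition, $\iota \circ p$ is a plot of $G_0/G$, so locally we have $\iota \circ p|_{\msf V} = \pi \circ q'$ for some smooth $q': \msf V \to G_0$, but \emph{a priori} $q'$ need not land in $U$. The key point is to use that the target map $t: G \to G_0$ is a submersion with Hausdorff fibers. Fix $r_0 \in \msf V$; then $\pi(q'(r_0)) \in \pi(U)$, so there is some $x_0 \in U$ and an arrow $g_0 \in G$ with $s(g_0) = x_0$ and $t(g_0) = q'(r_0)$. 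By the submersion property, choose a smooth local section $\sigma$ of $t$ on a neighbourhood of $q'(r_0)$ with $\sigma(q'(r_0)) = g_0$, and define $q(r) := s(\sigma(q'(r)))$ on a small neighbourhood of $r_0$. Then $q$ is smooth, satisfies $q(r_0) = x_0 \in U$ (so by continuity $q$ lands in $U$ near $r_0$ since $U$ is open), and $\sigma(q'(r))$ is a $G$-arrow from $q(r)$ to $q'(r)$, so $\pi(q(r)) = \pi(q'(r)) = (\iota \circ p)(r)$. Hence $\pi_U \circ q = p$ near $r_0$, which is exactly the local lift required by the quotient diffeology.

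Combining both containments, $\bar\iota$ is a diffeomorphism onto $\pi(U)$. The ``In particular'' clause follows by specializing to the action groupoid $G = \Gamma \ltimes M$, where $G|_V$ for $V \subseteq M$ open has orbit space $V/G|_V = \pi(V)$.
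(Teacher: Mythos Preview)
Your proof is correct and follows essentially the same approach as the paper's: both use a local section of the source/target submersion through a single arrow connecting the given lift $q'(r_0)\in G_0$ to a point in $U$, thereby correcting the lift so that it lands in $U$. The only cosmetic difference is that you take a section of $t$ and post-compose with $s$, whereas the paper takes a section of $s$ and post-composes with $t$; these are interchangeable via inversion in the groupoid.
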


For an open subset $U \subseteq G_0$, the above lemma justifies using the notation $|U|$ to refer unambiguously to the diffeological spaces $U/G|_U = \pi(U)$.

\begin{proof}

  Fix a map $p:\msf{U} \to \pi(U)$. Suppose that $p$ is a plot for the quotient diffeology. Then it locally lifts to smooth maps to $U$, hence to $G_0$. So it is a plot of $G_0/G$, hence of $\pi(U)$ as a subset of $G_0/G$. Now suppose that $p$ is smooth with respect to the subset diffeology. Then it locally lifts to smooth maps to $G_0$. Fix $r \in \msf{U}$. Let $\msf{V}$ be an open neighbourhood $r$ in $\msf{U}$, and let $q:\msf{V} \to G_0$ be a smooth map such that $\pi \circ q = p|_{\msf{V}}$. Choose $x \in U$ such that $p(r) = \pi(x)$. Then $\pi(q(r)) = \pi(x)$. Choose an arrow $g: q(r) \to x$.

  Because $s$ is a submersion, we may take a section $\sigma$ of $s$ about $q(r)$ such that $\sigma(x) = g$. Because $t$ is continuous and $t\sigma(x) \in U$, we may restrict the domain of $\sigma$ so that $t\sigma$ always lands in $U$. Then $t\sigma q$, on a sufficiently small domain, is the required local lift of $p$. 
\end{proof}

\begin{proposition}
\label{prop:2}
  The orbit space of an $n$-quasifold groupoid $G$ is a diffeological $n$-quasifold: if $\cl A = \{F:G|_U \to (\Gamma \ltimes \R^n)|_{\msf{V}}\}$ is an atlas for $G$, the induced maps $|\cl{A}| = \{|F|: |U| \to \msf{V}/\Gamma\}$ form an atlas for $|G|$.
\end{proposition}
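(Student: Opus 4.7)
The strategy is to verify the two ingredients in Definition \ref{def:8} of diffeological quasifold: (i) $|G|$ is second-countable, and (ii) each orbit has a D-open neighbourhood diffeomorphic to a model diffeological quasifold. The charts in $|\cl{A}|$ will be obtained by applying the functor $\mbf{F}$ of Proposition \ref{prop:1} to the Lie-groupoid isomorphisms in $\cl{A}$, after which Remark \ref{rmk:1} and Remark \ref{rmk:2} let us rewrite the target in the exact form required by Definition \ref{def:8}.

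For second-countability, I would first observe that the quotient map $\pi:G_0\to |G|$ is an open map of topological spaces. Indeed, for any open $U\subseteq G_0$ we have $\pi^{-1}(\pi(U))=t(s^{-1}(U))$, which is open because $s$ is continuous and $t$ is a submersion (hence open). Since $G_0$ is a second-countable manifold and $\pi$ is an open continuous surjection, the D-topology on $|G|$ (which, by Definition \ref{def:7} and the remark following Definition \ref{def:6}, is exactly the quotient topology of the manifold topology on $G_0$) admits a countable basis given by the images of a countable basis of $G_0$. Hence $|G|$ is second-countable.

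For the local charts, fix $\bm{x}\in |G|$ and choose any $x\in G_0$ with $\pi(x)=\bm{x}$. The atlas $\cl{A}$ provides an isomorphism of Lie groupoids $F:G|_U\to (\Gamma\ltimes\R^n)|_{\msf{V}}$ with $x\in U$. Since $F$ is an isomorphism of Lie groupoids, the associated bibundle $\langle F\rangle$ (Example \ref{ex:6}) is invertible, so by the last clause of Proposition \ref{prop:1} the induced map $|F|:|G|_U|\to |(\Gamma\ltimes\R^n)|_{\msf{V}}|$ is a diffeomorphism of diffeological spaces; explicitly, $|F|([y])=[F_0(y)]$. By Lemma \ref{lem:5}, $|G|_U|=U/G|_U$ is canonically identified with $\pi(U)\subseteq |G|$ endowed with the subset diffeology, and this is a D-open neighbourhood of $\bm{x}$ (using openness of $\pi$ again). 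On the target side, applying Lemma \ref{lem:5} to the Lie group action $\Gamma\curvearrowright\R^n$ identifies the quotient diffeology on $\msf{V}/(\Gamma|_{\msf{V}})$ with the subset diffeology on $\pi_{\R^n}(\msf{V})\subseteq \R^n/\Gamma$.

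It then remains to rewrite this target as a model diffeological quasifold in the sense of Definition \ref{def:8}, which requires a $\Gamma$-invariant open subset and an effective countable group of affine transformations. This is exactly the content of Remark \ref{rmk:1}: replacing $\Gamma$ by its image $\Gamma/\ker(\Gamma)\hookrightarrow \Aff(\R^n)$ and $\msf{V}$ by its $\Gamma$-saturation $\ti{\msf{V}}=\pi_{\R^n}^{-1}(\pi_{\R^n}(\msf{V}))$ leaves the quotient diffeology $\pi_{\R^n}(\msf{V})$ unchanged, and $\ti{\msf{V}}/(\Gamma/\ker(\Gamma))$ is then a bona-fide model diffeological quasifold. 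Composing $|F|$ with this identification gives the desired chart and shows that $|\cl{A}|$ is an atlas.

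The only nontrivial point in the whole argument is bookkeeping: one must check that all the different diffeologies appearing on $\pi(U)$, on $\msf{V}/\Gamma|_{\msf{V}}$, on $\pi_{\R^n}(\msf{V})\subseteq\R^n/\Gamma$, and on $\ti{\msf{V}}/(\Gamma/\ker(\Gamma))$ coincide. Each coincidence is an instance of Lemma \ref{lem:5} or of the saturation/effectivization observation in Remark \ref{rmk:1}, so no further technical obstacle arises. The openness of $\pi$ and the invertibility of $\langle F\rangle$ (used to invoke Proposition \ref{prop:1}) are the two structural ingredients that do the real work.
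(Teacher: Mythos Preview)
Your proposal is correct and follows essentially the same route as the paper: apply Proposition \ref{prop:1} to each groupoid isomorphism $F$ to obtain a diffeomorphism $|F|$, then use Lemma \ref{lem:5} on both sides to identify $|U|$ with an open subset of $|G|$ and $|(\Gamma\ltimes\R^n)|_{\msf V}|$ with an open subset of $\R^n/\Gamma$; second-countability is inherited from $G_0$.

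The paper's proof is considerably terser. It states that $G_0$ second-countable implies $|G|$ second-countable without writing out the openness argument for $\pi$ that you supply, and it stops at ``an open subset of $\R^n/\Gamma$'' without spelling out, as you do, that one must then saturate $\msf V$ and pass to $\Gamma/\ker(\Gamma)$ (via Remark \ref{rmk:1}) to match the literal form of Definition \ref{def:8}. Your additional bookkeeping is correct and makes explicit what the paper leaves implicit; there is no substantive difference in strategy.
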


\begin{proof}
  Fix an $n$-quasifold groupoid $G$, and $x \in G_0$.  Let $F:G|_U \to (\Gamma \ltimes \R^n)|_{\msf{V}}$ be an element of $\cl{A}$ such that $U$ contains $x$. Since $F$ is a Lie groupoid isomorphism, by Proposition \ref{prop:1} it descends to a diffeological diffeomorphism $|F|: |U| \to |(\Gamma\ltimes \R^n)|_{\msf{V}}|$. By Lemma \ref{lem:5}, we find $|F|$ is a diffeomorphism from an open subset of $X$ about $[x]$, to an open subset of $\R^n/\Gamma$. Because $G_0$ is second-countable, so is $|G|$, so by Definition \ref{def:8}, we conclude $|G|$ is a diffeological $n$-quasifold.
\end{proof}

\subsection{Effective quasifold groupoids}

Every quasifold groupoid is \'{e}tale, because the groups $\Gamma$ in the local models $(\Gamma \ltimes \R^n)|_{\msf{V}}$ are discrete. Now we define an effective \'{e}tale groupoid. We will use pseudogroups. In this subsection $P$ denotes a pseudogroup, and not a bibundle.

\begin{definition}
\label{def:20}
 A \emph{pseudogroup} $P$ on a manifold $M$ is a set of transitions $M \to M$ (see Definition \ref{def:6}) that contains the identity, is closed under composition, inversion, and restriction to open subsets, and satisfies the following locality axiom:
  \begin{quote}
 If $f$ is a transition on $M$ and there exists a cover of its domain for which $f|_U \in P$ for every element $U$ of the cover, then $f \in P$.
\end{quote}
\eod
\end{definition}

A pseudogroup $P$ on $M$ partitions $M$ into \emph{orbits} $\{f(x) \mid f \in P\}$. The intersection of pseudogroups is a pseudogroup. For a set $P_0$ of transitions, we call the intersection of all pseudogroups containing $P_0$ the pseudogroup \emph{generated} by $P_0$.

\begin{remark}
\label{lem:6}
  A pseudogroup $P$ on $M$ is generated by a set of transitions $P_0$ if and only if every $\phi \in P$ is locally a composition of elements of $P_0$ and their inverses. Precisely, for every $x \in \dom \phi$, there are $\phi_i \in P_0$, and $\epsilon_i \in \{1,-1\}$, such that $\germ_x\phi = \germ_x \phi_1^{\epsilon_1} \circ \cdots \circ \phi_n^{\epsilon_n}$.
\end{remark}

  \begin{example}
    \label{ex:9}\
    \begin{itemize}
    \item The collection of all transitions from a manifold $M$ to itself is a pseudogroup, called the \define{Haefliger pseudogroup}.
    \item Given an \'{e}tale Lie groupoid $G\rra G_0$, the collection of \emph{local bisections} of $G_0$, defined as $\{t \circ \sigma \mid \sigma \text{ is a local inverse of } s\}$, generates a pseudogroup on $G_0$, which we denote $\Psi(G)$.
    \end{itemize}
    \eoe
  \end{example}

  \begin{remark}
  \label{prop:3}
  Conversely, given a pseudogroup $P$ on $M$, we form its \emph{germ groupoid} $\Gamma(P) \rra M$. Its arrows are the germs of elements of $P$. The source map is $s(\germ_x\phi) = x$, the target is $t(\germ_x\phi) = \phi(x)$, the multiplication is $\germ_x\phi \cdot \germ_{x'} \phi' = \germ_{x'} \phi\phi'$, the unit map is $x \mapsto \germ_x\id$, and the inversion is $(\germ_x\phi)^{-1} = \germ_{\phi(x)}\phi^{-1}$. We equip the arrow space $\Gamma(P)$ with the smooth structure given by the following atlas. For each $\phi \in P$ and chart $r:U \to \Omega$ of $M$ with $U \subseteq \dom \phi$, take the chart
  \begin{equation*}
\{\germ_x\phi \mid \phi(x) \in U\} \to \Omega, \quad \text{given by} \quad \germ_x\phi \mapsto r(x).
 \end{equation*}
  \eod
  \end{remark}

  Now we introduce the effect functor, and the definition of an effective \'{e}tale groupoid.
  
  \begin{definition}
    \label{lem:7}
    Let $G$ be \'{e}tale Lie groupoid. Let $\Psi(G)$ be its associated pseudogroup (see Example \ref{ex:9}). Let $\Gamma(\Psi(G))$ be the its germ groupoid. The \define{effect} functor $\Eff:G \to \Gamma(\Psi(G))$ is identity on $G_0$, and
    \begin{equation*}
      \Eff(g) := \germ_{s(g)} (t \circ \sigma),
    \end{equation*}
    where $\sigma$ a local inverse of $s$ taking $s(g)$ to $g$. This is a surjective local diffeomorphism on arrows. An \'{e}tale Lie groupoid $G$ is \define{effective} if $\Eff$ is also injective on arrows. In this case, $G$ is isomorphic to $\Eff(G) = \Gamma(\Psi(G))$.
    \eod
  \end{definition}
  The germ groupoid $\Gamma(P)$ of a pseudogroup is effective. In general, for an \'{e}tale Lie groupoid $G$ and pseudogroup $P$, we have the relations
  \begin{equation}\label{eq:4}
    \Gamma(\Psi(G)) = \Eff(G), \quad \Psi(\Gamma(P)) = P.
  \end{equation}

  We define $\qfoldgrpd^{\text{loc-iso}}_{\text{eff}}$ as the subcategory of $\qfoldgrpd^{\text{loc-iso}}$ consisting of effective quasifold groupoids.

  \begin{corollary}
    \label{cor:2}
    The quotient functor $\mbf{F}$ restricts to a functor $\mbf{F}_{\text{Quas}}$ from $\qfoldgrpd^{\text{loc-iso}}_{\text{eff}}$ to $\diffeolqfold^{\text{loc-iso}}$.
  \end{corollary}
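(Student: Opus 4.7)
The plan is to observe that Corollary \ref{cor:2} is a direct combination of the two previous results in this section, namely Proposition \ref{prop:1} and Proposition \ref{prop:2}, and that effectiveness plays no role in the verification (it is an additional restriction on the source category that is automatically preserved since we do nothing to the groupoids).

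First I would check the claim on objects: if $G$ is an object of $\qfoldgrpd^{\text{loc-iso}}_{\text{eff}}$, then $G$ is in particular a quasifold groupoid, so Proposition \ref{prop:2} gives that $\mbf{F}(G) = |G|$ is a diffeological quasifold, i.e.\ an object of $\diffeolqfold^{\text{loc-iso}}$. Second I would check the claim on arrows: if $P:G \to H$ is a locally invertible bibundle between effective quasifold groupoids, then the final clause of Proposition \ref{prop:1} says that $|P|:|G| \to |H|$ is a local diffeomorphism of diffeological spaces. Since both $|G|$ and $|H|$ are diffeological quasifolds by the previous step, $|P|$ is therefore an arrow of $\diffeolqfold^{\text{loc-iso}}$. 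Third, for 2-arrows: every 2-arrow in $\qfoldgrpd^{\text{loc-iso}}_{\text{eff}}$ is a bibundle morphism (automatically an isomorphism, by Remark \ref{rem:7}), and $\mbf{F}$ sends it to an identity 2-arrow in $\diffeol$, which is also an identity 2-arrow in $\diffeolqfold^{\text{loc-iso}}$ since the latter has only identity 2-arrows by construction.

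There is no real obstacle here; the only mildly non-trivial point already handled earlier is the identification, via Lemma \ref{lem:5}, of $|U|$ with an open subset of $|G|$, used inside the proof of Proposition \ref{prop:1} to deduce that $|P|$ is a \emph{local} diffeomorphism from the fact that each restriction $P|_U^V$ is invertible. Once these ingredients are assembled, I would conclude with one sentence noting that the compositional and unital properties of $\mbf{F}_{\text{Quas}}$ are inherited from $\mbf{F}$, so $\mbf{F}_{\text{Quas}}$ is indeed a functor of bicategories.
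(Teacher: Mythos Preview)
Your proposal is correct and follows exactly the paper's approach: the paper's proof is the single sentence ``This is Proposition \ref{prop:2} combined with Proposition \ref{prop:1},'' and you have simply spelled out in more detail how those two propositions handle objects and arrows respectively, with the 2-arrow case being trivial.
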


  \begin{proof}
    This is Proposition \ref{prop:2} combined with Proposition \ref{prop:1}.
  \end{proof}

  \ifdebug \newpage \fi
  
\section{From Diffeological Quasifolds to Quasifold Groupoids}
\label{sec:from-diff-quas}

In this section, we show that the quotient functor $\mbf{F}_{\text{Quas}}$ is essentially surjective. Namely, to a diffeological quasifold $X$, equipped with atlas an $\cl{A}$, we associate an effective quasifold groupoid $\Gamma(\cl{A})$ whose quotient $\mbf{F}(\Gamma(\cl{A}))$ is diffeomorphic to $X$. As the notation suggests, $\Gamma(\cl{A})$ will be a germ groupoid of a pseudogroup.

\begin{definition}
  \label{def:21}
  Let $\cl{A} = \{F_i:U_i \to \msf{V}_i/\Gamma_i\}$ be a countable atlas of a diffeological quasifold $X$. Let
  \begin{equation*}
     \pi:\bigsqcup \msf{V}_i \to \bigsqcup \msf{V}_i/\Gamma_i, \quad \text{and} \quad F^{-1}:\bigsqcup \msf{V}_i/\Gamma_i \to X
   \end{equation*}
   be the maps induced by $\pi_i$ and $F_i^{-1}$.  Denote
   \begin{equation*}
     \Pi := F^{-1}\pi.
   \end{equation*}
   Let $\Psi(\cl{A})$ be the pseudogroup of transitions $\phi$ on $\bigsqcup \msf{V}_i$ such that $\Pi \phi = \Pi$. Let $\Gamma(\cl{A})$ be the germ groupoid associated to the pseudogroup $\Psi(\cl{A})$; see Remark \ref{prop:3}. We have
  \begin{equation*}
    \Gamma(\cl{A}) = \bigsqcup_{i,j} \{\germ_x\phi \mid \phi \in \diffloc^{\cl{A}}(\msf{V}_i,\msf{V}_j), \ x \in \dom \phi\},
  \end{equation*}
  where $\diffloc^{\cl{A}}(\msf{V}_i,\msf{V}_j)$ is the set of transitions $\phi:\msf{V}_i \dashrightarrow \msf{V}_j$ in $\Psi(\cl{A})$.
  \eod
\end{definition}

\begin{remark}
\label{rem:10}
For each $i$, the transitions $\phi\in \diffloc^{\cl{A}}(V_i,V_i)$ are precisely the transitions of $\msf{V}_i$ which preserve $\Gamma_i$-orbits. By Corollary \ref{cor:1}, for each $x \in \dom \phi$, there is some $\gamma \in \Gamma_i$ such that $\germ_x\phi = \germ_x\gamma$. Therefore $\Gamma(\cl{A})|_{\msf{V}_i} = \{\germ_x \gamma \mid x \in \msf{V}_i, \ \gamma \in \Gamma_i\}$, and $\msf{V}_i/(\Gamma(\cl{A})|_{\msf{V}_i}) = \msf{V}_i/\Gamma_i$.
\eor
\end{remark}

The groupoid $\Gamma(\cl{A})$ appears in \cite{IZL2018} and \cite{IZP2020} as a diffeological groupoid. We show $\Gamma(\cl{A})$ is an effective quasifold groupoid; in particular, it is Lie and Hausdorff.

\begin{lemma}
  \label{lem:8}
  For a diffeological quasifold $X$ equipped with countable atlas $\cl A$, the Lie groupoid $\Gamma(\cl{A})$ is an effective quasifold groupoid.
\end{lemma}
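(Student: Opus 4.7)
The plan is to verify three things in order of difficulty: effectiveness, the local quasifold model, and Hausdorffness of the arrow space. Effectiveness is free from the construction: the germ groupoid of any pseudogroup is effective, per the second identity in \eqref{eq:4}. The \'etale smooth structure on the arrow space is supplied by the charts of Remark \ref{prop:3}, which overlap smoothly by construction (the transition between two such charts is just the identity on its domain). So the real content is the local quasifold model and Hausdorffness.

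For the local model, fix an index $i$ and let $\ker(\Gamma_i) \leq \Gamma_i$ denote the subgroup of elements acting as the identity on $\R^n$, as in Remark \ref{rmk:1}. I would show that
\begin{equation*}
  \Phi_i: (\Gamma_i / \ker(\Gamma_i)) \ltimes \msf{V}_i \longrightarrow \Gamma(\cl{A})|_{\msf{V}_i}, \qquad ([\gamma], y) \longmapsto \germ_y \gamma,
\end{equation*}
is a well-defined isomorphism of Lie groupoids. Well-definedness is immediate, surjectivity is Remark \ref{rem:10}, and injectivity follows from affine rigidity: if $\germ_y \gamma_1 = \germ_y \gamma_2$, then the affine map $\gamma_1 \gamma_2^{-1}$ equals the identity on a neighbourhood of $y$, hence on all of $\R^n$, so $[\gamma_1] = [\gamma_2]$. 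Reading $\Phi_i$ in the natural arrow chart attached to each $\gamma \in \Gamma_i$ (Remark \ref{prop:3}), it is the identity on $\msf{V}_i$, hence a diffeomorphism intertwining source, target, unit, and multiplication. Since each $(\Gamma_i / \ker(\Gamma_i)) \ltimes \msf{V}_i$ is a model quasifold groupoid and the $\msf{V}_i$ cover the base of $\Gamma(\cl{A})$, the family $\{\Phi_i^{-1}\}$ furnishes a quasifold atlas.

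The main obstacle is Hausdorffness of the arrow space. Take distinct arrows $\germ_{x_1}\phi_1$ and $\germ_{x_2}\phi_2$ in $\Gamma(\cl{A})$. If their sources or targets differ, pull back disjoint neighbourhoods by $s$ or $t$. The delicate case is $x := x_1 = x_2 \in \msf{V}_i$ and $z := \phi_1(x) = \phi_2(x) \in \msf{V}_j$. After shrinking, $\phi_2 \phi_1^{-1}$ is a local transition in $\diffloc^{\cl{A}}(\msf{V}_j, \msf{V}_j)$ fixing $z$, so by Remark \ref{rem:10} its germ at $z$ equals $\germ_z \gamma$ for some $\gamma \in \Gamma_j$. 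Since the two original germs differ, $\gamma \notin \ker(\Gamma_j)$, and being affine and non-trivial, $\gamma$ cannot agree with the identity on any open subset of $\R^n$. Shrinking further so that $\phi_2 = \gamma \phi_1$ on a connected neighbourhood of $x$, the two arrow charts $y \mapsto \germ_y \phi_k$ from Remark \ref{prop:3} have disjoint images: a common germ would force $\gamma$ to equal the identity on an open neighbourhood of $\phi_1(y)$, contradicting the previous sentence. This gives disjoint open neighbourhoods separating the two arrows, establishing Hausdorffness. The crucial input throughout is the rigidity of affine actions encoded in Lemma \ref{lem:1} and Remark \ref{rem:10}; this is precisely the ingredient that Section 7 will show to fail for general smooth countable group actions.
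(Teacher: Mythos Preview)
Your proof is correct and follows essentially the same approach as the paper: effectiveness from the germ construction, the local quasifold model via the bijection $(\gamma,y)\mapsto\germ_y\gamma$ (surjectivity from Remark~\ref{rem:10}, injectivity from affine rigidity), and Hausdorffness by separating on source/target and then, in the coincident case, reducing to a nontrivial affine element. The only cosmetic differences are that you work with $\phi_2\phi_1^{-1}$ in the target chart $\msf{V}_j$ while the paper uses $\phi^{-1}\phi'$ in the source chart $\msf{V}_i$, and that your passage through $\Gamma_i/\ker(\Gamma_i)$ is redundant: by Definition~\ref{def:8} each $\Gamma_i$ is already a \emph{subgroup} of $\Aff(\R^n)$, so $\ker(\Gamma_i)$ is trivial and the paper simply writes $\Gamma_i\ltimes\msf{V}_i$.
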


\begin{proof}
  Since $\Gamma(\cl{A})$ is a germ groupoid, it is effective. We show $\Gamma(\cl{A})$ has an atlas of quasifold charts. Consider a chart $F:U \to \msf{V}/\Gamma$ in $\cl{A}$ with $\msf{V}  \subseteq \R^n$ and $\Gamma$ is a subgroup of $\Aff(\R^n)$.

  By Remark \ref{rem:10}, elements of $\Gamma(\cl{A})|_{\msf{V}}$ are precisely the germs of elements of $\Gamma$. We may then consider the surjective map
  \begin{equation}\label{eq:5}
     \Gamma \times \msf{V} \to \Gamma(\cl{A})|_{\msf{V}}, \quad (\gamma, x) \mapsto \germ_x\gamma.
  \end{equation}
  This is also injective, because for affine transformations, if $\gamma \neq \gamma'$, then $\germ_x \gamma \neq \germ_x \gamma'$.

  For fixed $\gamma$, the induced map $\msf{V} \to \Gamma(\cl{A})|_{\msf{V}}$ is simply $\germ \gamma$, which is a smooth diffeomorphism. Since $\Gamma$ is discrete, we conclude the map in \eqref{eq:5} is a local diffeomorphism; because it is bijective, it is a diffeomorphism. Together with the identity on the base, the map \eqref{eq:5} gives the desired functor from the model quasifold.

  Now we show $\Gamma(\cl{A})$ is Hausdorff. Take any two distinct elements of $\Gamma(\cl{A})$. Write them as $\germ_x\phi$ and $\germ_{x'} \phi'$, for $\phi:\msf{V}_i \dashrightarrow \msf{V}_j$ and $\phi':\msf{V}_{i'} \dashrightarrow \msf{V}_{j'}$. If $i \neq i'$ or $j \neq j'$, then $\phi$ and $\phi'$ have disjoint domains or codomains. The corresponding subsets of $\Gamma(\cl{A})$ are the desired disjoint neighbourhoods. Now suppose that $i = i'$ and $j = j'$. There are three cases.

  \begin{itemize}
  \item[Case 1] If $x \neq x'$, separate $x$ and $x'$ by disjoint neighbourhoods $\msf{U}$ and $\msf{U}'$. The required neighbourhoods are the subsets of $\Gamma(\cl{A})$ corresponding to $\phi|_{\msf{U}}$ and $\phi'|_{\msf{U'}}$.
  \item[Case 2]  If $x = x'$ but $\phi(x) \neq \phi'(x)$, take neighbourhoods $\msf{U}$ and $\msf{U}'$ of $x$ such that $\phi(\msf{U})$ and $\phi'(\msf{U}')$ are disjoint. The required neighbourhoods are the subsets of $\Gamma(\cl{A})$ corresponding to $\phi|_{\msf{U}}$ and $\phi'|_{\msf{U'}}$.
  
    \item[Case 3] If $x = x'$ and $\phi(x) = \phi'(x)$, then $\germ_x(\phi^{-1}\phi')$ is an element of $\Gamma(\cl{A})|_{\msf{V}_i}$, which by Remark \ref{rem:10}, we can identify with some $\gamma \in \Gamma_i$. Because $\germ_x\phi \neq \germ_x\phi'$, this $\gamma$ is not the identity. Therefore $\germ_y\phi \neq \germ_y\phi'$ for all $y$ in some small neighbourhood $\msf{U}$ of $x$.  The required neighbourhoods are the subsets of $\Gamma(\cl{A})$ corresponding to $\phi|_{\msf{U}}$ and $\phi'|_{\msf{U}}$.
    \end{itemize}
\end{proof}

Thus we have associated to $X$ the effective quasifold groupoid $\Gamma(\cl{A})$. Now we show that the functor $\mbf{F}$ respects this assignment.
\begin{proposition}
\label{prop:4}
  Fix a quasifold $X$ with countable atlas $\cl{A}$, as in Definition \ref{def:21}. In the notation from Definition \ref{def:21}, the map $\Pi$ descends to a diffeological diffeomorphism $|\Pi|: \bigsqcup \msf{V}_i / \Gamma(\cl{A}) \to X$. In particular, $\mbf{F}(\Gamma(\cl{A})) \cong X$. 
\end{proposition}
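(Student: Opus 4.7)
The plan is to show that $\Pi$ descends to a well-defined bijection $|\Pi|$ on the orbit space, and then verify smoothness in both directions using the universal properties of the quotient diffeologies together with Lemma \ref{lem:2}.

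First I would check that $\Pi$ is constant on $\Gamma(\cl{A})$-orbits. By Remark \ref{rem:10}, any arrow of $\Gamma(\cl{A})|_{\msf{V}_i}$ is $\germ_x\gamma$ for some $\gamma \in \Gamma_i$, and $\Pi = F_i^{-1} \circ \pi_i$ on $\msf{V}_i$ is visibly $\Gamma_i$-invariant. For an arrow $\germ_x \phi$ with $\phi: \msf{V}_i \dashrightarrow \msf{V}_j$ a transition in $\Psi(\cl{A})$, the defining relation $\Pi\phi = \Pi$ is exactly the statement needed. Thus $\Pi$ descends to a map $|\Pi|$ on $\bigsqcup \msf{V}_i/\Gamma(\cl{A})$, which is smooth by the universal property of the quotient diffeology, since $\Pi$ itself is smooth (being built from the smooth maps $\pi_i$ and the diffeomorphisms $F_i^{-1}$).

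Next I would verify that $|\Pi|$ is a bijection. Surjectivity is immediate, since the $U_i = F_i^{-1}(\msf{V}_i/\Gamma_i)$ cover $X$. For injectivity, suppose $\Pi(x) = \Pi(y)$ with $x \in \msf{V}_i$ and $y \in \msf{V}_j$. Then $F_j F_i^{-1}(\pi_i(x)) = \pi_j(y)$, and $F_jF_i^{-1}$ is, on suitable neighbourhoods, a local diffeomorphism between model quasifolds $\msf{V}_i/\Gamma_i$ and $\msf{V}_j/\Gamma_j$. By Lemma \ref{lem:2}, there is a transition $\phi:\msf{V}_i \dashrightarrow \msf{V}_j$ with $\phi(x) = y$ lifting $F_jF_i^{-1}$. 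This lifting condition says $\pi_j \phi = F_j F_i^{-1} \pi_i$ on the domain of $\phi$, and precomposing with $F_i^{-1}$ gives $\Pi\phi = \Pi$; so $\phi \in \Psi(\cl{A})$ and $\germ_x \phi$ is an arrow of $\Gamma(\cl{A})$ taking $x$ to $y$. Hence $[x] = [y]$ in the orbit space.

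It remains to show the set-theoretic inverse $|\Pi|^{-1}$ is smooth. Fix a plot $p:\msf{U} \to X$ and $r \in \msf{U}$. Since $p(r) \in U_i$ for some $i$, by shrinking $\msf{U}$ we may assume $p$ lands in $U_i$; then $F_i \circ p$ is a plot of $\msf{V}_i/\Gamma_i$, which by the definition of the quotient diffeology locally lifts to a smooth map $q:\msf{U} \to \msf{V}_i$. By Remark \ref{rem:10}, the $\Gamma(\cl{A})|_{\msf{V}_i}$-orbits in $\msf{V}_i$ coincide with the $\Gamma_i$-orbits, and the inclusion $\msf{V}_i \hookrightarrow \bigsqcup \msf{V}_i$ followed by the orbit projection $\pi'$ composed with $q$ gives a parametrization into $\bigsqcup \msf{V}_i/\Gamma(\cl{A})$ that agrees with $|\Pi|^{-1} \circ p$. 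This is by construction a plot of the quotient diffeology, so $|\Pi|^{-1}$ is smooth.

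The most delicate step is the injectivity argument, since it is the one genuine use of the structure of diffeological quasifolds via Lemma \ref{lem:2}; everything else is formal manipulation of the quotient diffeologies. Once $|\Pi|$ is shown to be a diffeological diffeomorphism, the final claim $\mbf{F}(\Gamma(\cl{A})) \cong X$ follows by the definition of $\mbf{F}$ as the orbit-space functor and the identification $\mbf{F}(\Gamma(\cl{A})) = \bigsqcup \msf{V}_i / \Gamma(\cl{A})$.
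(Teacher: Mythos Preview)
Your proposal is correct and follows essentially the same route as the paper: descent of $\Pi$ from the defining relation $\Pi\phi=\Pi$, surjectivity from the cover, injectivity via Lemma~\ref{lem:2} applied to $F_jF_i^{-1}$, and smoothness of the inverse by locally lifting through the chart $\msf{V}_i$. The only cosmetic difference is that the paper packages the last step as an explicit local inverse $\ol{\iota}\circ F_i$ on each $U_i$, whereas you verify the same thing plotwise; these are equivalent.
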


\begin{proof}
The orbits of $\Gamma(\cl{A})$ are the equivalence classes of the relation: $x \sim \phi(x)$, for some $i$ and $j$ and $\phi \in \diffloc^{\cl{A}}(\msf{V}_i,\msf{V}_j)$. Therefore $\Pi$ descends to the quotient. By results from diffeology, $|\Pi|$ is diffeologically smooth. Because $\Pi$ is onto, it descends to a surjective map. It remains to show injectivity of $|\Pi|$ and smoothness of its inverse.
\begin{itemize}
\item $|\Pi|$ is injective:

  Suppose $\Pi(x) = \Pi(x')$, where $x \in \msf{V}_i$ and $x' \in \msf{V}_j$. This means $F^{-1}_i\pi_i(x) = F^{-1}_j\pi_j(x')$. Applying $F_j$ on both sides, and denoting $F_{ij} := F_j F_i^{-1}$, we get
  \begin{equation*}
    F_{ij} \pi_i(x) = \pi_j(x'). 
  \end{equation*}
  The map $F_{ij}$ is a transition $\msf{V}_i/\Gamma_i \dashrightarrow \msf{V}_j/\Gamma_j$. By Lemma \ref{lem:2}, we can lift it to a transition $\phi_{ij}: \msf{V}_i \dashrightarrow \msf{V}_j$ sending $x$ to $x'$. Therefore $x$ and $x'$ are in the same orbit, hence the induced map $|\Pi|$ is injective.
\item $|\Pi|$ has an inverse:

  The inclusion $\iota: \msf{V}_i \hookrightarrow \bigsqcup \msf{V}_i$ is smooth, and if $x$ and $x'$ are in the same $\Gamma_i$ orbit, then $\iota(x)$ and $\iota(x')$ are in the same $\Gamma(\cl{A})$ orbit. Therefore the inclusion descends to a smooth map $\ol \iota: \msf{V}_i/\Gamma_i \to \bigsqcup \msf{V}_i/\Psi_0$. The composition $\iota F_i:U_i \to \iota(\msf{V}_i/\Gamma_i)$ is smooth, and it is the inverse of the restriction $|\Pi|: \iota(\msf{V}_i/\Gamma_i) \to U_i$. This shows $|\Pi|$ is a local diffeomorphism, and since it is bijective, it is a diffeomorphism.
\end{itemize} 
\end{proof}

\ifdebug \newpage \fi

\section{Lifting Local Diffeomorphisms Between Diffeological Quasifolds }
\label{sec:lift-local-isom}
In this section we show that the quotient functor $\mbf{F}_{\text{Quas}}$ is surjective on arrows. In other words, given effective quasifold groupoids $G$ and $H$, and a local diffeomorphism $f:|G| \to |H|$, we construct a locally invertible bibundle $P:G \to H$ such that $|P| = f$. First, we replace the effective quasifolds $G$ and $H$ with convenient germ groupoids.

\begin{definition}
  \label{def:22}
  For an \'{e}tale Lie groupoid $G$, define the pseudogroup $\diffloc^G(G_0)$ to consist of all transitions $\phi$ of $G_0$ that preserve $G$-orbits. Denote its germ groupoid by
  \begin{equation*}
  \Gamma^G := \Gamma(\diffloc^G(G_0)).  
  \end{equation*}
   \eod
\end{definition}
Note that the pseudogroup $\Psi(G)$ of local bisections of $G_0$ (see Example \ref{ex:9}) is a subset of $\diffloc^G(G_0)$.
\begin{lemma}
\label{lem:9}
If $G$ is a quasifold groupoid, then $\Psi(G) = \diffloc^G(G_0)$, and hence $|G| = |\Gamma^G|$. 
\end{lemma}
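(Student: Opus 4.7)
The containment $\Psi(G) \subseteq \diffloc^G(G_0)$ is immediate: for any local inverse $\sigma$ of $s$ and $x \in \dom \sigma$, the element $\sigma(x)$ is an arrow from $x$ to $t\sigma(x)$, so $t\sigma$ preserves $G$-orbits; since $\Psi(G)$ is generated by such $t\sigma$, and pseudogroup operations preserve the orbit-preserving property, the inclusion follows.

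For the reverse inclusion, fix $\phi \in \diffloc^G(G_0)$ and $x \in \dom \phi$. Since $\phi$ preserves orbits, there is an arrow $g : x \to \phi(x)$; choose a local inverse $\sigma$ of $s$ with $\sigma(x) = g$, so $\tau := t\sigma$ lies in $\Psi(G)$ and $\tau(x) = \phi(x)$. The composition $\psi := \tau^{-1} \circ \phi$ is then a transition of $G_0$ that fixes $x$ and preserves $G$-orbits, and it suffices to show $\germ_x \psi = \germ_x \tau'$ for some $\tau' \in \Psi(G)$, for then $\germ_x \phi = \germ_x (\tau \circ \tau')$ locally realizes $\phi$ as an element of $\Psi(G)$, and the locality axiom of Definition \ref{def:20} delivers $\phi \in \Psi(G)$.

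To produce $\tau'$, select a quasifold chart $F : G|_U \to (\Gamma \ltimes \R^n)|_{\msf V}$ with $x \in U$, and shrink $\dom \psi$ about $x$ so that both $\dom \psi$ and $\psi(\dom \psi)$ lie in $U$ (possible since $\psi$ fixes $x$ and is continuous). Transporting via $F_0$, the map $\bar\psi := F_0 \circ \psi \circ F_0^{-1}$ is a $C^1$ transition of an open subset of $\msf V \subseteq \R^n$ that preserves $\Gamma$-orbits, because $F$ identifies $G$-orbits in $U$ with $\Gamma$-orbits in $\msf V$. Corollary \ref{cor:1} then produces a single $\gamma \in \Gamma$ with $\germ_{F_0(x)}\bar\psi = \germ_{F_0(x)}\gamma$. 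The section $y \mapsto (\gamma, y)$ of the source map of $\Gamma \ltimes \R^n$ is a local bisection whose target composition is the affine map $\gamma$; pulling this bisection back through the groupoid isomorphism $F$ gives a local bisection $\tau'$ of $G|_U$, viewed inside $\Psi(G)$, with $\germ_x \tau' = \germ_x \psi$, as required.

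Once the equality of pseudogroups is in hand, the identification $|G| = |\Gamma^G|$ is formal. Both are quotients of $G_0$ equipped with the quotient diffeology, and the underlying equivalence relations coincide: $x$ and $y$ lie in the same $G$-orbit iff there is an arrow $x \to y$ iff (extending such an arrow to a local bisection) $y = \phi(x)$ for some $\phi \in \Psi(G)$, which by the first part is the same as the orbit relation of $\diffloc^G(G_0)$ defining $|\Gamma^G|$. The principal obstacle in the whole argument is the non-trivial pseudogroup inclusion, and specifically the invocation of Corollary \ref{cor:1} to pass from ``preserves $\Gamma$-orbits'' to ``agrees germ-wise with an element of $\Gamma$'' in the local model; this is the unique step that uses affineness of the local action.
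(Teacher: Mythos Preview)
Your proof is correct and follows essentially the same approach as the paper: reduce to a transition fixing $x$ by composing with a local bisection, transport through a quasifold chart, apply Corollary~\ref{cor:1} to obtain $\gamma \in \Gamma$, and pull back the bisection $y \mapsto (\gamma,y)$ through $F$. The only organizational difference is that the paper splits into the cases $\phi(x)=x$ and $\phi(x)\neq x$, while you handle them uniformly by always precomposing with $\tau^{-1}$; you also spell out the easy inclusion and the ``hence $|G|=|\Gamma^G|$'' clause, which the paper leaves implicit.
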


\begin{proof}
Fix $\phi \in \diffloc^G(G_0)$. It suffices to show that about every $x \in \dom \phi$, there is a neighbourhood on which $\phi$ restricts to an element of $\diffloc^G(G_0)$.
  \begin{itemize}
  \item[Case 1:] $\phi(x) = x$. Choose a quasifold groupoid chart $F:G|_U \to (\Gamma \ltimes \R^n)|_{\msf{V}}$ about $x$. The (partially defined) map $F_0\phi F_0^{-1}$ restricts to a transition $V \dashrightarrow V$ about $F_0(x)$ that preserves $\Gamma$-orbits. By Corollary \ref{cor:1}, we may choose $\gamma \in \Gamma$ such that $\germ_{F_0(x)} F_0\phi F_0^{-1} = \germ_{F_0(x)}\gamma$. Consider the partially defined map
  \begin{equation*}
    \sigma: U \dashrightarrow G|_U, \quad x' \mapsto F^{-1}(\gamma, F_0(x')). 
  \end{equation*}
  This is a section of $s$, and composing with $t$ yields,
  \begin{equation*}
    tF^{-1}(\gamma, F_0(x')) =  F_0^{-1}(\gamma \cdot F_0(x')) = \phi(x').
  \end{equation*}
  Therefore, near $x$, we have $\phi = t\sigma \in \Psi(G)$.
  \item[Case 2:] $\phi(x) \neq x$. Since preserves $G$-orbits, there exists an arrow $g \in G$ such that $g:x \to \phi(x)$. Let $\sigma$ be a section of $s$ through $g$. Then $(t\sigma)^{-1} \phi$ is an element of $\diffloc^G(G_0)$ fixing $x$. By Case 1, this is in $\Psi(G)$. Therefore, near $x$, we have $\phi = (t\sigma)((t\sigma)^{-1}\phi) \in \Psi(G)$.
  \end{itemize} 
 
\end{proof}
\begin{corollary}
  By the correspondence \eqref{eq:4}, we see $\Eff(G) = \Gamma^G$. If $G$ is also effective, then $G \cong \Eff(G) = \Gamma^G$.
\end{corollary}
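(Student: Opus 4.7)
The plan is to chain together the two ingredients already in hand: Lemma \ref{lem:9}, which identifies the pseudogroup $\Psi(G)$ of local bisections with the larger pseudogroup $\diffloc^G(G_0)$ of all orbit-preserving transitions, and the correspondence \eqref{eq:4} between pseudogroups and effective \'{e}tale groupoids, which in particular says $\Gamma(\Psi(G)) = \Eff(G)$.

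Concretely, first I would apply the germ groupoid construction $\Gamma(-)$ to both sides of the equality $\Psi(G) = \diffloc^G(G_0)$ from Lemma \ref{lem:9}. Since $\Gamma(-)$ is defined on any pseudogroup on $G_0$ and depends only on the pseudogroup itself, this yields
\begin{equation*}
\Gamma(\Psi(G)) = \Gamma(\diffloc^G(G_0)) = \Gamma^G,
\end{equation*}
where the second equality is just the definition of $\Gamma^G$ (Definition \ref{def:22}). Combining this with the first relation in \eqref{eq:4}, namely $\Gamma(\Psi(G)) = \Eff(G)$, gives the desired identification $\Eff(G) = \Gamma^G$.

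For the second assertion, I would invoke Definition \ref{lem:7}: an \'{e}tale Lie groupoid $G$ is effective precisely when the effect functor $\Eff : G \to \Gamma(\Psi(G))$ is injective on arrows, and in that case it is a Lie groupoid isomorphism onto its image $\Eff(G)$. Thus effectiveness of $G$ gives $G \cong \Eff(G)$, and combining with the first part yields $G \cong \Eff(G) = \Gamma^G$.

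There is no real obstacle here; the corollary is essentially a one-line bookkeeping consequence of Lemma \ref{lem:9} and the formal properties of the effect functor collected in \eqref{eq:4}. The only thing to keep straight is that Lemma \ref{lem:9} is what upgrades the tautological inclusion $\Psi(G) \subseteq \diffloc^G(G_0)$ (which holds for any \'{e}tale groupoid) to an equality in the quasifold setting; without that upgrade, one would only get $\Eff(G) \subseteq \Gamma^G$ rather than equality.
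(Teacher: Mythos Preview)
Your proposal is correct and matches the paper's intended argument; indeed, the paper does not supply a separate proof of this corollary, precisely because it follows immediately from Lemma~\ref{lem:9}, Definition~\ref{def:22}, and the first relation in \eqref{eq:4}, exactly as you outline. Your observation about why Lemma~\ref{lem:9} is needed to upgrade the tautological inclusion to an equality is also apt.
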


Now we show that the quotient functor $\mbf{F}_{\text{Quas}}$ is surjective on arrows.

\begin{proposition}
  \label{prop:5}
  Suppose $G$ and $H$ are effective quasifold groupoids. Assume $f:|G| \to |H|$ is a local diffeomorphism. Then there is a locally invertible bibundle $P:G \to H$ such that $|P| = f$. If $f$ is a diffeomorphism, then $P$ is invertible.
\end{proposition}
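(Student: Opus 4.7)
The plan is to use the identification $G \cong \Gamma^G$, $H \cong \Gamma^H$ from the Corollary to Lemma \ref{lem:9} and to construct $P$ by combining local transition lifts of $f$ (supplied by Lemma \ref{lem:2}) with a Morita refinement of $G$. For each $x \in G_0$, I first produce a smooth transition $\tilde{f}_x \colon U_x \to W_x$ between open subsets of $G_0$ and $H_0$ satisfying $\pi_H \tilde{f}_x = f \pi_G$. Pick quasifold groupoid charts $F_G \colon G|_{U_x} \to (\Gamma \ltimes \R^n)|_{\msf{V}}$ at $x$ and $F_H \colon H|_{W_x} \to (\Gamma' \ltimes \R^{n'})|_{\msf{V}'}$ at a chosen representative of $f([x])$. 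By Proposition \ref{prop:2} these descend to diffeological quasifold charts of $|G|$ and $|H|$, and the conjugate of $f$ by them is a local diffeomorphism of model diffeological quasifolds; Lemma \ref{lem:2} lifts it to a transition of the Euclidean models, which transports back through $F_G$ and $F_H$ to give $\tilde{f}_x$.

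Next, cover $G_0$ by such $U_i$ with lifts $\tilde{f}_i \colon U_i \to W_i$, and form the pullback Lie groupoid $\tilde{G} := \iota^* G$ along the surjective submersion $\iota \colon \bigsqcup_i U_i \to G_0$, so $\tilde{G}_0 = \bigsqcup_i U_i$ and an arrow from $(x,i)$ to $(y,j)$ is an arrow $g \colon x \to y$ in $G$. The natural functor $\tilde{G} \to G$ is a Morita equivalence, so $\langle \tilde{G} \to G \rangle$ is invertible. Define a smooth functor $\tilde{F} \colon \tilde{G} \to H$ by $\tilde{F}_0(x,i) := \tilde{f}_i(x)$, and on an arrow $g = \germ_x \phi$ (with $\phi \in \Psi(G) = \diffloc^G(G_0)$) from $(x,i)$ to $(y,j)$ by
\begin{equation*}
  \tilde{F}(g) := \germ_{\tilde{f}_i(x)}\bigl(\tilde{f}_j \circ \phi \circ \tilde{f}_i^{-1}\bigr) \in \Gamma^H \cong H.
\end{equation*}
The conjugate $\tilde{f}_j \phi \tilde{f}_i^{-1}$ is a transition of $H_0$ preserving $H$-orbits, because each $\tilde{f}_k$ covers $f$ and $\phi$ preserves $G$-orbits; so it determines an arrow in $\Gamma^H$. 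About each $(x,i)$, the restriction $\tilde{F} \colon \tilde{G}|_{U_i} \to H|_{W_i}$ is an isomorphism of Lie groupoids, with inverse built symmetrically from $\tilde{f}_i^{-1}$. Lemma \ref{lem:4} then gives that $\langle \tilde{F} \rangle$ is a locally invertible bibundle, and I set
\begin{equation*}
  P := \langle \tilde{F} \rangle \circ \langle \tilde{G} \to G \rangle^{-1}.
\end{equation*}
As the composition of a locally invertible bibundle with an invertible one, $P$ is locally invertible, and Proposition \ref{prop:1} gives $|P|([x]) = [\tilde{f}_i(x)] = f([x])$ whenever $x \in U_i$, so $|P| = f$. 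When $f$ is itself a diffeomorphism, refine the cover so that the $W_i$ cover $H_0$; then viewing $\tilde{F}$ as landing in the Morita refinement $\tilde{H} := (\iota')^* H$ along $\iota' \colon \bigsqcup W_i \to H_0$, it becomes an isomorphism of Lie groupoids, so $\langle \tilde{F} \rangle$ is invertible and hence $P$ is invertible.

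The main obstacle is ensuring that the chart-dependent local lifts $\tilde{f}_i$ assemble into a single smooth functor on a single Lie groupoid, rather than merely matching up to 2-isomorphism on overlaps. I sidestep any cocycle-style gluing by working on the pullback $\tilde{G}$, whose objects retain the chart index $i$: different lifts at a common base point $x$ are evaluated on distinct objects $(x,i)$ and $(x,j)$ of $\tilde{G}_0$, and the formula for $\tilde{F}$ on arrows is then well-defined without any required compatibility between the $\tilde{f}_i$ themselves.
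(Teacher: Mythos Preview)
Your approach is correct in spirit and genuinely different from the paper's. The paper builds the bibundle directly as a manifold of germs: after pulling back both $G$ and $H$ along quasifold atlases to $\fk{G}$ and $\fk{H}$, it sets $Q := \{\germ_y\psi \mid f\pi\psi = \pi'\}$ as an open subset of a Haefliger groupoid, and verifies the principal-bundle axioms for the $\fk{G}$ and $\fk{H}$ actions by hand. You instead produce a smooth \emph{functor} $\tilde F$ on a one-sided Morita refinement $\tilde G$ and then invoke Lemma~\ref{lem:4} to obtain local invertibility of $\langle\tilde F\rangle$. Your route is more conceptual and reuses existing machinery, while the paper's is more explicit and avoids the effect isomorphism $H\cong\Gamma^H$ inside the construction (it only uses Lemma~\ref{lem:9} to identify $\fk{G}$ with a germ groupoid).

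There is one small gap worth flagging. When you assert that $\tilde F\colon \tilde G|_{U_i}\to H|_{W_i}$ is an isomorphism ``with inverse built symmetrically from $\tilde f_i^{-1}$,'' you need the conjugate $\tilde f_i^{-1}\psi\tilde f_i$ of an $H$-orbit-preserving transition $\psi$ to preserve $G$-orbits. From $\pi_H\tilde f_i = f\pi_G$ you only get $f\bigl(\pi_G\tilde f_i^{-1}\psi\tilde f_i(x)\bigr)=f\bigl(\pi_G(x)\bigr)$, so equality of the $G$-orbits follows only if $f$ is \emph{injective} on $|U_i|$. This is not automatic from $\tilde f_i$ being a diffeomorphism: two distinct $G$-orbits in $U_i$ could have the same image under $f$, in which case $\tilde F|_{U_i}$ fails to be surjective on arrows. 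The fix is immediate---when choosing $U_x$ in Step~1, first shrink so that $f$ restricts to a diffeomorphism on $|U_x|$ (possible since $f$ is a local diffeomorphism)---but it should be stated. Similarly, in the diffeomorphism case, ``refine the cover so the $W_i$ cover $H_0$'' requires a word: for each $y\in H_0$ use Lemma~\ref{lem:2} to pick a lift hitting $y$, and adjoin these to the collection.
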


\begin{proof}
  Let
  \begin{equation*}
    \pi:G_0 \to |G| \quad \text{and} \quad \pi':H_0 \to |H|
  \end{equation*}
  be the quotient maps.  Because $f$ is a local diffeomorphism, $f(|G|)$ is an open subset of $|H|$. The groupoid $H|_{(\pi')^{-1}(f(|G|))}$ is an open subgroupoid of $H$. The bibundle associated to the inclusion functor is a locally invertible bibundle $H|_{(\pi')^{-1}(f(|G|))} \to H$, which lifts the inclusion $f(|G|) \hookrightarrow |H|$. By functoriality of $\mbf{F}$, it suffices to find a locally invertible bibundle $Q:G \to H|_{(\pi')^{-1}(f(|G|))}$. Therefore, without loss of generality, we assume $f$ is surjective.

   Fix quasifold groupoid atlases of $G$ and $H$,
  \begin{equation*}
    \cl{A} := \{F_i: G|_{U_i} \to (\Gamma_i \ltimes \R^n)|_{\msf{V}_i}\}, \quad \cl{A}' := \{F_\alpha':H|_{U_\alpha'} \to (\Gamma_\alpha' \ltimes \R^{n'})|_{\msf{V}_\alpha'}\}.
  \end{equation*}
  We have open covering maps
  \begin{equation*}
   \iota:\cl{U} := \bigsqcup U_i \to G_0, \quad \iota':\cl{U}' := \bigsqcup U_\alpha' \to H_0.
  \end{equation*}
  We can pull back $G$ by $\iota$ to get a Lie groupoid $\fk{G} \rra \cl{U}$. Its arrows from  $x \in U_i$ to $y \in U_j$ are, by Lemma \ref{lem:9}, germs of $G$-orbit preserving transitions $\phi:U_i \dashrightarrow U_j$ such that $\phi(x) = y$. In other words, we can identify $\fk{G}$ with
  \begin{equation*}
    \Gamma (\{\phi \in \diffloc(\cl{U}) \mid \pi \phi = \pi \text{ on } \dom \phi\}).
  \end{equation*}
  Here we interpret $\pi \phi$ to mean $\pi \iota \phi$. In particular, we view $\fk{G}$ as a subgroupoid of the Haefliger groupoid $\Gamma(\cl{U})$ of germs of transitions on $\cl{U}$. We have a similar description of $\fk{H} := (\iota')^*H$.

Define
  \begin{equation*}
    Q := \{\germ_y \psi \mid \psi \text{ is a transition }U_\alpha' \dashrightarrow U_i \text{ and }f \pi \psi = \pi'\}.
  \end{equation*}
  As an open subset of the arrow space of the Haefliger groupoid $\Gamma(\cl{U}' \sqcup \cl{U})$, this is a manifold. We get submersions $s,t$ from $Q$, induced by the source and target of the ambient Haefliger groupoid. Using Proposition \ref{prop:2} and the given atlases to identify $\pi(U_i)$ with $\msf{V}_i/\Gamma_i$, and $\pi(U_\alpha')$ with $\msf{V}_\alpha'/\Gamma_\alpha'$, we can use Lemma \ref{lem:2} to see that, for any $x \in U_i$ and $y \in U_j'$ for which $f(\pi(x)) = \pi(y)$, there is a transition $\ti{f}:U_i \dashrightarrow U_\alpha'$ that is a local lift of $f$ taking $x$ to $y$. Then $\germ_y(\ti{f})^{-1} \in Q$ has source $y$ and target $x$. Because we assumed $f$ is surjective, we conclude $s:Q \to \cl{U}'$ and $t:Q \to \cl{U}$ are surjective.

  The groupoid $\fk{G}$ acts on $Q$ from the left along the anchor map $t$, by composition of germs. The groupoid $\fk{H}$ acts on $Q$ from the right along the anchor map $s$, also by composition. These actions are smooth, and commute because composition is associative. The maps $t$ and $s$ are $\fk{H}$ and $\fk{G}$ invariant, respectively. In a diagram,
  \begin{equation*}
    \begin{tikzcd}
      \fk{G} \circlearrowright &  Q  \ar[dl, "t"'] \ar[dr, "s"] & \circlearrowleft \fk{H} \\
      \cl{U} & &\cl{U}'.
    \end{tikzcd}
  \end{equation*}

We show $Q$ is a right principal $\fk{H}$-bundle, and hence a bibundle. For $x_0 \in U_i$, because $t$ is surjective we can find some $\germ_{\psi_0^{-1}(x_0)}\psi_0 \in Q$. Say $\psi_0$ is a transition $U_\alpha' \dashrightarrow U_i$, and denote $W := \dom \psi_0^{-1} \subseteq U_i$. Take the local section $\sigma$ of $t$ defined by
\begin{equation*}
  \sigma:W \to Q, \quad \sigma(x) := \germ_{\psi_0^{-1}(x)} \psi_0.
\end{equation*}
We define the map $\Phi$ (not to be confused with $\Phi$ from Section \ref{sec:from-diff-quas}) fitting in the diagram below
\begin{equation*}
  \begin{tikzcd}
    t^{-1}(W) \ar[r, "\Phi"] \ar[d, "t"] & W' \fiber{s\sigma}{t} \fk{H} \ar[dl, "\pr_1"] \\
    W
  \end{tikzcd}
\end{equation*}
by
\begin{equation*}
  \Phi(\germ_y \psi) := (\psi(y), \germ_y \psi_0^{-1} \psi).
\end{equation*}
Because $\germ_y\psi \in t^{-1}(W)$, necessarily $\psi(y) \in W$, so we can compose $\psi_0^{-1}$ and $\psi$, and $\germ_y \psi_0^{-1} \psi$ is well-defined. Furthermore, first using that $\psi_0 \in Q$ and then $\psi \in Q$,
\begin{equation*}
  \pi'\psi_0^{-1}\psi = f\pi\psi = \pi' \text{ on } \dom \psi.
\end{equation*}
Therefore $\Phi$ is well-defined. Unravelling the definitions yields that $\Phi^{-1}$ exists, that both maps are smooth, and that $\Phi$ is $\fk{H}$-equivariant.

Finally, we show $Q$ is locally invertible. For $y_0 \in U_\alpha'$, because $s$ is surjective, we can find some $\germ_{y_0} \psi_0 \in Q$. Say $\psi_0$ is a transition $U_\alpha' \dashrightarrow U_i$. Choose an open neighbourhood $W'$ of $y_0$ in $U_\alpha'$, and an open neighbourhood $W$ of $\psi_0(y_0)$ in $U_i$, such that
\begin{align*}
  &\psi_0 \text{ restricts to a transition } W' \to W,\quad \text{and} \\
  &f \text{ restricts to a diffeomorphism } f|_{|W'|}: |W'| \to |W|.
\end{align*}
We denote $Q|^{W'}_W := s^{-1}(W') \cap t^{-1}(W)$. Because of our choice of $W'$ and $W$, we can rewrite the condition that $\psi \in Q|^{W'}_W$ as
\begin{equation}\label{eq:6}
  \pi \psi = f|_{|W'|}\pi' \text{ on } \dom \psi
\end{equation}
Take the local section $\sigma$ of $s$ defined by
\begin{equation*}
  \sigma: W' \to Q|^{W'}_W, \quad \sigma(y) := \germ_y \psi_0.
\end{equation*}
Define the map $\Phi$ fitting in the diagram below
\begin{equation*}
  \begin{tikzcd}
    \fk{G} \fiber{s}{t\sigma} W' \ar[dr, "\pr_2"'] & s^{-1}(W') \ar[l, "\Phi"'] \ar[d, "s"] \\
    & W'
  \end{tikzcd}
\end{equation*}
by
\begin{equation*}
  \Phi(\germ_y\psi) := (\germ_{\psi_0(y)}\psi \psi_0^{-1}, y).
\end{equation*}
Because $\psi \in s^{-1}(W')$, and $\psi_0$ is a transition $W' \to W$, we can compose $\psi$ and $\psi_0^{-1}$, hence $\germ_{\psi_0(y)}\psi\psi_0^{-1}$ is well-defined. Furthermore, first using condition \eqref{eq:6} and $\psi \in Q|^{W'}_W$, and then using $\psi_0 \in Q|^{W'}_W$,
\begin{equation*}
  \pi\psi\psi_0^{-1} = f|_{\pi(W')}\pi'\psi_0^{-1} = f|_{\pi(W')}f\pi = \pi \text{ on } \dom \psi_0^{-1}.
\end{equation*}
Therefore $\Phi$ is well-defined. Checking the other required properties hold is a matter of unravelling definitions. We conclude that $Q$ is a locally-invertible bibundle.

If $f$ is a diffeomorphism, then in the argument above we may choose $W' := \dom \psi_0$, and there is no need to choose $W$ or restrict $f$ or $Q$. This shows that $Q$ is invertible.

The bibundle $Q$ fits into the diagram
\begin{equation*}
  \begin{tikzcd}
    \fk{G} \ar[r, "\langle \iota \rangle" ] \ar[rrr, bend left, "Q"] \ar[d] & G\ar[d] & H\ar[d] & \ar[l, "\langle \iota' \rangle"'] \ar[d] \fk{H} \\
    {|\fk{G}|} \ar[r] & {|G|} \ar[r, "f"] & {|H|} \ar[r,leftarrow] & {|\fk{H}|}.
  \end{tikzcd}
\end{equation*}

Both $\langle \iota \rangle$ and $\langle \iota' \rangle$ are invertible bibundles. Therefore the locally invertible bibundle $P:= (\langle \iota' \rangle \circ Q) \circ \langle \iota \rangle^{-1}$ is the desired lift of $f$. If $f$ is a diffeomorphism, then $Q$, hence $P$, is invertible.
\end{proof}

\ifdebug \newpage \fi

\section{Local Diffeomorphisms Have Essentially Unique Lifts}
\label{sec:an-equiv-categ}

In this section, we show that the quotient functor $\mbf{F}_{\text{Quas}}$ is injective on arrows, up to 2-isomorphism. In other words, for two locally invertible bibundles $P,Q:G \to H$ between effective quasifold groupoids that descend to the same map $|P| = |Q|:|G| \to |H|$, we give an isomorphism of bibundles $P \cong Q$. We begin with the case of functors.

\begin{lemma}
  \label{lem:10}
  Suppose $G, H$ are effective quasifold groupoids, and $F,K:G \to H$ are smooth functors that induce the same map $|F|=|K|:|G| \to |H|$. If $|F|$ (hence $|K|$) is a local diffeomorphism, then there is a smooth natural transformation $F \to K$.
\end{lemma}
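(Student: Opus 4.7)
The plan is to construct the natural transformation $\alpha: F \Rightarrow K$ by defining, at each $x \in G_0$,
\[\alpha(x) := \germ_{F_0(x)}(K_0 \circ F_0^{-1}) \in \Gamma^H \cong H,\]
where the identification uses effectiveness of $H$ and Lemma~\ref{lem:9}. The main preliminary is to show that $F_0$ (and similarly $K_0$) is a local diffeomorphism around every $x \in G_0$, so that the germ of a local inverse is available.

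\emph{Step 1: $F_0$ is a local diffeomorphism.} Fix $x \in G_0$ and pass to quasifold charts around $x$ and around $F_0(x)$, so that $F_0$ is represented locally by a smooth map $\tilde{F}_0 : \msf{V} \to \msf{V}'$ lifting $|F|: \msf{V}/\Gamma \to \msf{V}'/\Gamma'$ through the quotient maps. Since $|F|$ is a local diffeomorphism, Lemma~\ref{lem:2} produces a transition $\tilde{f}: \msf{V} \dashrightarrow \msf{V}'$ lifting $|F|$ and sending $x$ to $\tilde{F}_0(x)$. Then $\tilde{F}_0 \circ \tilde{f}^{-1}$ is a smooth $\Gamma'$-orbit preserving map near $\tilde{F}_0(x)$, so by Corollary~\ref{cor:1} there is $\gamma' \in \Gamma'$ with $\germ_{\tilde{F}_0(x)}(\tilde{F}_0 \tilde{f}^{-1}) = \germ_{\tilde{F}_0(x)} \gamma'$. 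Hence $\tilde{F}_0 = \gamma' \circ \tilde{f}$ near $x$, a composition of diffeomorphisms, so $F_0$ is a local diffeomorphism near $x$. The same argument handles $K_0$.

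\emph{Step 2: $\alpha$ is well-defined and smooth.} With $F_0$ locally invertible, the germ $\germ_{F_0(x)}(K_0 F_0^{-1})$ is unambiguous. The transition $K_0 F_0^{-1}$ is $H$-orbit preserving, since on a suitable neighborhood of $F_0(x)$,
\[\pi' K_0 F_0^{-1} = |K|\pi F_0^{-1} = |F|\pi F_0^{-1} = |F||F|^{-1}\pi' = \pi'.\]
Hence $\alpha(x) \in \Gamma^H \cong H$ is an arrow $F_0(x) \to K_0(x)$. For smoothness, in the chart of $\Gamma^H$ (Remark~\ref{prop:3}) associated to the transition $\phi := K_0 F_0^{-1}$, the assignment $\alpha$ corresponds to $x' \mapsto F_0(x')$, which is smooth.

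\emph{Step 3: Naturality.} For $g: x \to y$ in $G$, choose a local section $\sigma$ of $s_G$ with $\sigma(x) = g$, and set $\psi := t_G \circ \sigma$, a local bisection of $G_0$. Because $F$ is a smooth functor and $F_0$ is a local diffeomorphism, $t_H \circ F\sigma \circ F_0^{-1}$ is a local bisection of $H_0$ equal to $F_0 \psi F_0^{-1}$, and so in the identification $H \cong \Gamma^H$ one has $F(g) = \germ_{F_0(x)}(F_0 \psi F_0^{-1})$; likewise $K(g) = \germ_{K_0(x)}(K_0 \psi K_0^{-1})$. A direct germ computation gives
\[\alpha(y)F(g) = \germ_{F_0(x)}(K_0 F_0^{-1} \circ F_0 \psi F_0^{-1}) = \germ_{F_0(x)}(K_0 \psi F_0^{-1}),\]
\[K(g)\alpha(x) = \germ_{F_0(x)}(K_0 \psi K_0^{-1} \circ K_0 F_0^{-1}) = \germ_{F_0(x)}(K_0 \psi F_0^{-1}),\]
so the naturality square commutes.

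The main obstacle is Step~1, the rigidity statement that a smooth functor descending to a local diffeomorphism of orbit spaces must itself be \'etale on objects. This is precisely where the quasifold hypothesis enters, through Lemma~\ref{lem:2} and Corollary~\ref{cor:1}; once it is in hand, the remaining construction and verifications reduce to formal manipulations of germs of orbit-preserving transitions of $H_0$.
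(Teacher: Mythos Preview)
Your proposal is correct and follows essentially the same approach as the paper: establish that $F_0$ and $K_0$ are local diffeomorphisms (the paper just cites Lemma~\ref{lem:2}, while you spell out the argument), define $\alpha(x)$ as the arrow in $H$ corresponding via $\Eff^{-1}$ to $\germ_{F_0(x)}(K_0 F_0^{-1}) \in \Gamma^H$, and verify naturality by the same germ computation with a local bisection $\psi = t_G \sigma$. The only cosmetic difference is that the paper first fixes an explicit cover $\{U_i\}$ on which $F_0$ and $K_0$ restrict to diffeomorphisms, builds $\alpha$ as a natural transformation $\iota^*F \Rightarrow \iota^*K$ over $\bigsqcup U_i$, and then invokes invertibility of $\langle \iota \rangle$ to descend to $G_0$; your direct germ definition bypasses this cover-and-descend step, which is slightly cleaner since the germ is independent of the choice of local inverse anyway.
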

\begin{proof}
  Because $F,K:G_0 \to H_0$ both lift the local diffeomorphism $|F| = |K|$, they are also local diffeomorphisms; this is a consequence of Lemma \ref{lem:2}. Therefore, we may cover $G_0$ with open sets $U_i$, such that
  \begin{align*}
    F_i &:= F|_{U_i}:U_i \to F(U_i)=: V_i' \text{ and }\\
    K_i &:= K|_{U_i}:U_i \to K(U_i) =: V_i''
  \end{align*}
  are diffeomorphisms. The map $KF_i^{-1}:V_i' \to H_0$ preserves $H$-orbits: using the lifting properties of $F$ and $K$, we write
  \begin{align*}
    KF_i^{-1}(x') &\xar{\pi'} |K|([F_i^{-1}(x')]), \\
    x' = F(F_i^{-1}(x')) &\xar{\pi'} |F|([F_i^{-1}(x')]),
  \end{align*}
  and the right sides are equal because $|K| = |F|$. Therefore $\germ_{x'} KF_i^{-1} \in \Gamma^H$, and we have the smooth map
  \begin{equation*}
    \bigsqcup U_i \to \Gamma^H, \quad x \mapsto \germ_{F(x)} KF_i^{-1}, \text{ where } x \in U_i.
  \end{equation*}
  Because $H$ is an effective quasifold groupoid, by Lemma \ref{lem:9} the effect map is an isomorphism of groupoids $\Eff:H \to \Gamma^H$. So the map $x \mapsto \Eff^{-1}(\germ_{F(x)}KF_i^{-1})$ is a smooth map $\bigsqcup U_i \to H$. Denote this map by $\alpha$. We claim $\alpha$ is a smooth natural transformation $\alpha:\iota^*F \to \iota^* K$, where $\iota:\bigsqcup U_i \to G_0$ is the covering map. Because $\langle\iota \rangle$ is invertible, we conclude there is a smooth natural transformation from $F$ to $G$.

  \begin{itemize}
  \item The source of $\alpha(x)$ is $F(x)$. Its target is $KF_i^{-1}(F(x))$, and $x \in U_i$, so this is $K(x)$.
  \item Fix an arrow $g:x \to y$ in $G$, and say $x \in U_i$ and $y \in U_j$. Fix a section $\sigma$ of $s$ over a neighbourhood of $x$ in $U_i$, such that $\sigma(x) = g$. Shrink the domain of $\sigma$ so that $t\sigma \in U_j$ always holds.

    Then, $F\sigma F_i^{-1}$, restricted to some small neighbourhood of $F(x)$ in $V_i'$, is a well-defined section of $s$ such that $F\sigma F_i^{-1}(F(x)) = F(g)$. We may then compute
    \begin{align*}
      \Eff(\alpha(y) F(g)) &= \germ_{F(y)}KF_j^{-1} \cdot \germ_{F(x)} t(F\sigma F_i^{-1}) &&\text{by definition}\\
                           &= \germ_{F(x)} KF_j^{-1}tF\sigma f_i^{-1} &&\text{multiplying}\\
                           &= \germ_{F(x)} KF_j^{-1} Ft\sigma f_i^{-1} &&\text{because } F \text{ is a smooth functor}.
    \end{align*}
    By choice of the domain of $\sigma$, the first part $t\sigma F_i^{-1}$ has image in $U_j$. Therefore $F_j^{-1}F(t\sigma F_i^{-1}) = t\sigma F_i^{-1}$, and because $k$ is a smooth functor,
    \begin{equation*}
      \Eff(\alpha(y) F(g)) = \germ_{F(x)} Kt\sigma F_i^{-1} = \germ_{F(x)} tK\sigma F_i^{-1}.
    \end{equation*}
On the other hand, $K\sigma K_i^{-1}$, restricted to some small domain of $K(x)$ in $U_i''$, is a well-defined section of $s$ such that $K\sigma K_i^{-1}(K(x)) = K(g)$. So then
    \begin{align*}
      \Eff(K(g)\alpha(x)) &= \germ_{K(x)} t(K\sigma K_i^{-1}) \cdot \germ_{F(x)} KF_i^{-1} \\
                          &= \germ_{F(x)} tK\sigma K_i^{-1}KF_i^{-1} \\
                          &= \germ_{F(x)} tK\sigma F_i^{-1} &\text{because } F_i^{-1} \text{ lands in } U_i.
    \end{align*}
    Therefore $\Eff(\alpha(y)F(g))$ and $\Eff(K(g)\alpha(x))$ are the same arrow. Because $H$ is effective, we conclude $\alpha(y)F(g) = K(g)\alpha(x)$, as required.
  \end{itemize}
\end{proof}

Now we handle the general case.

\begin{proposition}
  \label{prop:6}
  Suppose $P,Q:G \to H$ are bibundles between effective quasifold groupoids that descend to the same map $|P| = |Q|:|G| \to |H|$, and that $|P|$ is a local diffeomorphism. Then $P$ and $Q$ are isomorphic bibundles. 
\end{proposition}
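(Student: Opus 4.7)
The plan is to reduce to the functorial case of Lemma~\ref{lem:10} via local trivializations of $P$ and $Q$, and then glue the resulting local bibundle isomorphisms using uniqueness afforded by the effectiveness of $H$.

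First, cover $G_0$ by open sets $U_i$ over which the principal right $H$-bundles $P \xar{a} G_0$ and $Q \xar{b} G_0$ admit sections $\sigma_i:U_i \to P$ and $\tau_i:U_i \to Q$. Each section determines a smooth functor: $F_i:G|_{U_i} \to H$ is given by $F_i(x) := a'(\sigma_i(x))$ on objects, and on an arrow $g:x \to y$, $F_i(g)$ is the unique arrow in $H$ satisfying $\sigma_i(y) \cdot F_i(g) = g \cdot \sigma_i(x)$. Build $K_i$ analogously from $\tau_i$. As in Example~\ref{ex:6}, $\langle F_i \rangle \cong P|_{U_i}$ and $\langle K_i \rangle \cong Q|_{U_i}$ as bibundles, so $|F_i| = |K_i|$ is a local diffeomorphism $|U_i| \to |H|$. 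Lemma~\ref{lem:10} then supplies natural transformations $\alpha_i:F_i \to K_i$, which transport to bibundle isomorphisms $\phi_i:P|_{U_i} \to Q|_{U_i}$ defined on sections by $\phi_i(\sigma_i(x)) := \tau_i(x) \cdot \alpha_i(x)$ and extended $H$-equivariantly; naturality of $\alpha_i$ yields the required $G$-equivariance.

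The heart of the argument is to glue the $\phi_i$ into a global morphism. For this, I would show that each $\alpha_i$ is uniquely determined by $F_i$ and $K_i$. Indeed, by Lemma~\ref{lem:2}, the common descent forces $F_i$ to be a local diffeomorphism on objects. For any natural transformation $\alpha:F_i \to K_i$, the map $\alpha \circ F_i^{-1}$ is a local section of $s:H \to H_0$ through $\alpha(x)$, so by definition of the effect functor,
\begin{equation*}
  \Eff(\alpha(x)) \;=\; \germ_{F_i(x)}(t \circ \alpha \circ F_i^{-1}) \;=\; \germ_{F_i(x)}(K_i \circ F_i^{-1}).
\end{equation*}
Since $H$ is effective, $\Eff$ is injective on arrows, so this germ determines $\alpha(x)$ uniquely. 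Consequently, on an overlap $U_i \cap U_j$, both $\phi_i$ and $\phi_j$ correspond to the unique natural transformation attached to any trivialization of the overlap, and hence agree there. The $\phi_i$ thus glue to a global bibundle morphism $P \to Q$, which is an isomorphism by Remark~\ref{rem:7}.

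The main obstacle is precisely this gluing: effectiveness of $H$ is what secures the uniqueness needed for compatibility on overlaps. Without it, different local natural transformations could obstruct the assembly of a global bibundle isomorphism, and the local constructions would only produce a collection of locally defined isomorphisms with no preferred way to piece them together.
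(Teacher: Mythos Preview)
Your argument is correct and arrives at the same conclusion, but by a genuinely different route than the paper. The paper sidesteps the gluing problem entirely: it invokes \cite[Lemma 3.37]{L2010} to pass to the pullback groupoid $\iota^*G$ along a single covering map $\iota:\bigsqcup U_i \to G_0$, obtaining \emph{global} functors $F,K:\iota^*G \to H$ with $P \circ \langle \iota \rangle \cong \langle F \rangle$ and $Q \circ \langle \iota \rangle \cong \langle K \rangle$. One application of Lemma~\ref{lem:10} then yields a natural transformation $F \to K$, hence $\langle F \rangle \cong \langle K \rangle$, and since $\langle \iota \rangle$ is invertible this gives $P \cong Q$ directly. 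No gluing is needed because the disjoint union absorbs the cover into a single Morita-equivalent groupoid.

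Your approach instead constructs local isomorphisms $\phi_i$ and glues them by hand. The uniqueness argument you give---that effectiveness of $H$ forces $\Eff(\alpha(x)) = \germ_{F_i(x)}(K_i F_i^{-1})$, hence determines $\alpha$---is exactly right, and it is the substantive content that makes the gluing work. The paper's method is slicker and avoids tracking transition functions on overlaps, but your argument has the virtue of making explicit \emph{why} effectiveness is the obstruction-killer: it pins down the natural transformation pointwise, so any two local bibundle isomorphisms over the same open set must coincide. Both proofs ultimately rest on Lemma~\ref{lem:10}; the difference is whether one globalizes before or after applying it.
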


\begin{proof}
  By \cite[Lemma 3.37]{L2010}, there is a cover $\iota:\bigsqcup U_i \to G_0$ and smooth functors $F,K:\iota^*G \to H$ such that
  \begin{align*}
    P \circ \langle \iota \rangle &\text{ is isomorphic to } \langle F\rangle, \text{ and } \\
    Q \circ \langle \iota \rangle &\text{ is isomorphic to } \langle K\rangle.
  \end{align*}
 Since $\langle \iota \rangle$ is invertible, it suffices to prove $\langle F\rangle \cong \langle K \rangle$. But $\iota^*G$ is also an effective quasifold groupoid, and $F,K$ are two smooth functors that induce the same map of quotient spaces, namely $|P| \circ |\langle\iota \rangle| = |Q| \circ |\langle\iota\rangle|$. So by Lemma \ref{lem:10} and Example \ref{ex:6}, we find $\langle F\rangle \cong \langle K \rangle$.
\end{proof}
This proposition proves that the quotient functor $\mbf{F}_{\text{Quas}}$ on effective quasifold groupoids is injective on arrows, up to 2-isomorphism. We gather our results in the main theorem.
\begin{theorem}
  \label{thm:1}
  Let $\qfoldgrpd^{\text{loc-iso}}_{\text{eff}}$ be the bicategory whose objects are effective quasifold groupoids, whose arrows are locally invertible bibundles, and whose 2-arrows are morphisms of bibundles. Let $\diffeolqfold^{\text{loc-iso}}$ be the bicategory whose objects are diffeological quasifolds, whose arrows are local diffeomorphisms, and whose 2-arrows are trivial. Then the quotient functor $\mathbf{F}_{\text{Quas}}: \qfoldgrpd^{\text{loc-iso}}_{\text{eff}} \to \diffeolqfold^{\text{loc-iso}}$, which is well defined by Corollary \ref{cor:2}, is essentially surjective, is surjective on arrows, and injective on arrows up to 2-isomorphism.
\end{theorem}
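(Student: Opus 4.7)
The plan is to check each of the three clauses of the theorem independently, since each has been prepared in a separate earlier section. Well-definedness of $\mbf{F}_{\text{Quas}}$ as a functor of bicategories is already given by Corollary \ref{cor:2}, so nothing remains on that front.

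For essential surjectivity, given a diffeological quasifold $X$ I would use second-countability to extract a countable quasifold atlas $\cl{A}$, and then form the germ groupoid $\Gamma(\cl{A})$ of Definition \ref{def:21}. Lemma \ref{lem:8} shows that $\Gamma(\cl{A})$ is an effective quasifold groupoid, so it is an object of $\qfoldgrpd^{\text{loc-iso}}_{\text{eff}}$, and Proposition \ref{prop:4} provides a diffeological diffeomorphism $\mbf{F}_{\text{Quas}}(\Gamma(\cl{A})) \cong X$. This completes the essential surjectivity clause.

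Surjectivity on arrows is precisely the content of Proposition \ref{prop:5}: given effective quasifold groupoids $G,H$ and a local diffeomorphism $f:|G|\to|H|$, that proposition constructs a locally invertible bibundle $P:G\to H$ with $|P|=f$. Injectivity on arrows up to 2-isomorphism is similarly exactly Proposition \ref{prop:6}: any two locally invertible bibundles $P,Q:G\to H$ with $|P|=|Q|$ admit an isomorphism $P\cong Q$ of bibundles, which is by definition a 2-arrow in $\qfoldgrpd^{\text{loc-iso}}_{\text{eff}}$.

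At the level of the main theorem, then, there is no genuine obstacle left, and the proof fits in a few lines. The hard work has been distributed over Sections \ref{sec:from-diff-quas}--\ref{sec:an-equiv-categ}, and ultimately rests on two rigidity statements for countable affine group actions: Corollary \ref{cor:1} (an orbit-preserving $C^1$ map locally coincides with a single group element) and Lemma \ref{lem:2} (local diffeomorphisms between model quasifolds lift to transitions between the covering open sets). These are what allow one to promote smooth maps on orbit spaces to smooth functors or bibundles at the groupoid level, and to identify two lifts as soon as they differ pointwise by the action of a group element, the latter being encoded in the identification $\Psi(G)=\diffloc^G(G_0)$ from Lemma \ref{lem:9}.
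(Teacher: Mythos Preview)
Your proposal is correct and matches the paper's own proof essentially line for line: the paper also dispatches the three clauses by citing Proposition~\ref{prop:4} (together with the construction of $\Gamma(\cl{A})$) for essential surjectivity, Proposition~\ref{prop:5} for surjectivity on arrows, and Proposition~\ref{prop:6} for injectivity up to 2-isomorphism. Your additional remarks on the role of Corollary~\ref{cor:1}, Lemma~\ref{lem:2}, and Lemma~\ref{lem:9} are accurate commentary on where the substance lies, though the paper does not include them in the proof of Theorem~\ref{thm:1} itself.
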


\begin{proof}\

  \begin{itemize}
  \item It is essentially surjective: given a diffeological quasifold $X$, take an atlas $\cl{A}$. Then $\Gamma(\cl{A})$ is an effective quasifold groupoid with orbit space diffeomorphic to $X$ by Proposition \ref{prop:4}.
  \item It is surjective on arrows: given a local diffeomorphism $f:|G| \to |H|$, where $G$ and $H$ are effective quasifold groupoids, we get a locally invertible bibundle $P:G \to H$ such that $|P| = f$ from Proposition \ref{prop:5}.
  \item It injective on arrows up to 2-isomorphism: if $P,Q:G \to H$ are locally invertible bibundles between effective quasifold groupoids that induce the same map on the orbit spaces, then $P \cong Q$ by Proposition \ref{prop:6}.

  \end{itemize}
\end{proof}

Moving to the Hilsum-Skandalis category $\mbf{HS}$ (see Remark \ref{rem:8}) lets us phrase this result more succinctly.

\begin{corollary}
  \label{cor:3}
The quotient functor, from the category whose objects are effective quasifold groupoids, and whose arrows are isomorphism classes of locally invertible bibundles, to the category whose objects are diffeological quasifolds, and whose arrows are local diffeomorphisms, is an equivalence of categories.
\end{corollary}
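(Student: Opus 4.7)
The plan is to reduce this corollary directly to Theorem \ref{thm:1}, since passing to the Hilsum-Skandalis category $\mbf{HS}$ is precisely the operation of quotienting bibundles by 2-isomorphism. Concretely, the arrows from $G$ to $H$ in $\qfoldgrpd^{\text{loc-iso}}_{\text{eff}}$ viewed in $\mbf{HS}$ are the 2-isomorphism classes of locally invertible bibundles $G \to H$. On the diffeological side, $\diffeolqfold^{\text{loc-iso}}$ already carries only trivial 2-arrows, so no additional identifications occur there. By Proposition \ref{prop:1}, isomorphic bibundles descend to the same map on orbit spaces, so $\mbf{F}_{\text{Quas}}$ factors through the quotient to give a well-defined functor of ordinary categories, which I will call $\ol{\mbf{F}_{\text{Quas}}}$.

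Once the functor is well defined, the three conclusions of Theorem \ref{thm:1} translate directly into the conditions characterizing an equivalence of categories. Essential surjectivity of $\ol{\mbf{F}_{\text{Quas}}}$ on objects is literally the first clause of Theorem \ref{thm:1}. For fullness, given a local diffeomorphism $f : |G| \to |H|$, the second clause of Theorem \ref{thm:1} supplies a locally invertible bibundle $P : G \to H$ with $|P| = f$, so the class $[P]$ is an arrow of the source category mapped to $f$ by $\ol{\mbf{F}_{\text{Quas}}}$. For faithfulness, if $P, Q : G \to H$ are locally invertible bibundles with $|P| = |Q|$, the third clause of Theorem \ref{thm:1} gives a bibundle isomorphism $P \cong Q$, which is exactly the assertion $[P] = [Q]$ in $\mbf{HS}$.

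Combining essential surjectivity, fullness, and faithfulness yields the desired equivalence of categories. There is no genuine obstacle in this argument; the corollary is a direct repackaging of Theorem \ref{thm:1} after absorbing the 2-arrows of $\qfoldgrpd^{\text{loc-iso}}_{\text{eff}}$ into equality of morphisms, which is why no new work is required beyond citing Proposition \ref{prop:1} to see that $\ol{\mbf{F}_{\text{Quas}}}$ is well defined.
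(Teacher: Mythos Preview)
Your proposal is correct and matches the paper's approach: the paper treats this corollary as an immediate repackaging of Theorem~\ref{thm:1} after passing to the Hilsum-Skandalis category, and does not even spell out a separate proof. Your explicit identification of the three clauses of Theorem~\ref{thm:1} with essential surjectivity, fullness, and faithfulness (together with Proposition~\ref{prop:1} for well-definedness) is exactly the intended argument.
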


Denote by $\qfoldgrpd^{[\text{loc-iso}]}_{\text{eff}}$ the subcategory of $\mbf{HS}$ induced by $\qfoldgrpd^{\text{loc-iso}}_{\text{eff}}$ . Corollary \ref{cor:3} resolves the question of gluing morphisms in $\qfoldgrpd^{[\text{loc-iso}]}_{\text{eff}}$. This is a question because in $\mbf{HS}$, morphisms generally do not glue, i.e. if $P$ and $Q$ are bibundles between Lie groupoids $G$ and $H$, and there is an open cover $\{U_i\}$ of $G_0$ such that $P|_{U_i} \cong Q|_{U_i}$ for all $i$, it does not follow that $P \cong Q$. For examples, see \cite[Lemma 3.41]{L2010} or \cite[page 2491]{HM2004}. However, for quasifolds:

\begin{corollary}
  \label{cor:4}
  If $P$ and $Q$ are locally invertible bibundles between effective quasifold groupoids $G$ and $H$, and there is an open cover $\{U_i\}$ of $G_0$ such that $P|_{U_i} \cong Q|_{U_i}$ for all $i$, then $P \cong Q$.
\end{corollary}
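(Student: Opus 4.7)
The plan is to reduce the statement to Proposition \ref{prop:6}, by checking that $P$ and $Q$ descend to the same map on orbit spaces, and then using functoriality and locality on the quotient side.

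First, I would unpack what $P|_{U_i}$ means: it is the restricted bibundle $G|_{U_i} \to H$, which is canonically isomorphic to $P \circ \langle \iota_{U_i}\rangle$ where $\iota_{U_i}: U_i \hookrightarrow G_0$ is the inclusion. Applying the functor $\mbf{F}$ and using its compatibility with composition from Proposition \ref{prop:1}, together with Lemma \ref{lem:5} to identify $|U_i| = U_i / G|_{U_i}$ as an open subset of $|G|$, I obtain
\begin{equation*}
    |P|_{U_i}| = |P| \circ \iota_{|U_i|},
\end{equation*}
where $\iota_{|U_i|}: |U_i| \hookrightarrow |G|$ is the inclusion of D-open subsets. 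In other words, $|P|_{U_i}|$ really is $|P|$ restricted to $|U_i|$, and similarly for $Q$.

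Next, the hypothesis $P|_{U_i} \cong Q|_{U_i}$ combined with the last clause of Proposition \ref{prop:1} (isomorphic bibundles induce the same map on orbit spaces) yields $|P|_{U_i}| = |Q|_{U_i}|$. By the previous identification, this means $|P|$ and $|Q|$ coincide when restricted to each $|U_i|$. Since the $U_i$ cover $G_0$ and the quotient map $\pi: G_0 \to |G|$ is surjective, the open sets $|U_i| = \pi(U_i)$ cover $|G|$. A map is determined by its restrictions to an open cover, so $|P| = |Q|$ as smooth maps $|G| \to |H|$.

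Finally, $P$ and $Q$ are locally invertible bibundles between effective quasifold groupoids that descend to the same local diffeomorphism $|P| = |Q|$, so Proposition \ref{prop:6} applies and gives an isomorphism $P \cong Q$. I do not expect a serious obstacle: the entire argument is a bookkeeping exercise using Proposition \ref{prop:1}, Lemma \ref{lem:5}, and Proposition \ref{prop:6}. The only subtle point is making sure the identification $|P|_{U_i}| = |P||_{|U_i|}$ is correct, which is exactly where Lemma \ref{lem:5} is used to see that the quotient topology and quotient diffeology on $|U_i|$ agree with the subset topology and diffeology induced from $|G|$.
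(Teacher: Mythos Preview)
Your proposal is correct and follows essentially the same approach as the paper's own proof: identify $|P|_{U_i}|$ with $|P||_{|U_i|}$ via Lemma~\ref{lem:3} and Proposition~\ref{prop:1}, deduce $|P|=|Q|$ from the cover, and then invoke the injectivity-on-arrows result (the paper cites Corollary~\ref{cor:3}, you cite Proposition~\ref{prop:6}, which amounts to the same thing).
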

\begin{proof}
  For each $i$, denote the inclusion $U_i \to G_0$ by $\iota_{U_i}$. By Lemma \ref{lem:3} and Proposition \ref{prop:1},
  \begin{equation*}
    |P|_{U_i}| = |P \circ \langle \iota_{U_i}\rangle| = |P| \circ |\langle \iota_{U_i} \rangle| = |P||_{|U_i|}.
  \end{equation*}
The assumption $P|_{U_i} \cong Q|_{U_i}$ implies that $|P||_{|U_i|} = |Q||_{|U_i|}$. Since the $|U_i|$ cover $|G_0|$, and since diffeologically smooth maps are local, this means $|P| = |Q|$. By Corollary \ref{cor:3}, we conclude that $P \cong Q$.
\end{proof}

\ifdebug \newpage \fi

\section{A non-example}
\label{sec:a-non-example} 
By Theorem \ref{thm:1}, given two quasifold groupoids $G$ and $H$, a diffeomorphism $|G| \to |H|$ lifts to a Morita equivalence of $G$ and $H$. In this last section, we describe a non-example where this lifting property fails. Fix a smooth function $h:\R \to \R$ that is flat at $0$ and positive everywhere else, and such that the vector field
\begin{equation*}
  \xi := h\frac{\partial}{\partial x}
\end{equation*}
is complete. For example, we can take
\begin{equation*}
h(x) :=
\begin{cases}
  e^{-\frac{1}{x^2}} & \text{if } x \neq 0 \\
  0 &\text{if } x=0.
\end{cases}
\end{equation*}
The time-one flow of $\xi$ is a diffeomorphism $\psi:\R \to \R$ such that
\begin{itemize}
\item its jet at $0$ coincides with the jet at $0$ of the identity map, and
\item $\psi(\R_{>0}) = \R_{>0}$ and $\psi(\R_{<0}) = \R_{<0}$.
\end{itemize}
The inverse $\psi^{-1}:\R \to \R$ has the same properties. So does
\begin{equation*}
  \hat{\psi}(x) :=
  \begin{cases}
    \psi(x) &\text{if } x \geq 0 \\
    \psi^{-1}(x) &\text{if } x < 0.
  \end{cases}
\end{equation*}
Let $G$ be a copy of $\Z$, with action on $\R$ generated by $\psi$. Let $H$ be a copy of $\Z$, with action on $\R$ generated by $\hat{\psi}$. Then $G$ and $H$ have the same orbits, so
\begin{equation*}
  \R/G = \R/H =: \R/{\sim}.
\end{equation*}

  \begin{proposition}
    \label{prop:7}
  The identity map on $\R/{\sim}$ does not lift to a biprincipal bibundle between the action groupoids $G \ltimes \R$ and $H \ltimes \R$. Moreover, these action groupoids are not Morita equivalent.
\end{proposition}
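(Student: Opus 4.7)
My plan is to assume for contradiction that there exists a biprincipal bibundle $P : G \ltimes \R \to H \ltimes \R$, and exhibit an incompatibility between the asymmetric dynamics of $\psi$ at $0$ and the symmetric dynamics of $\hat\psi$ there. Both groupoids are \'etale (the acting $\Z$'s are discrete), so each anchor map $a, a' : P \to \R$, being a principal bundle over an \'etale base, is a local diffeomorphism. The induced diffeomorphism $|P|$ of $\R/{\sim}$ must fix the orbit $[0]$, since $[0]$ is the unique orbit with nontrivial isotropy. Hence, picking any $p_0 \in P$ with $a(p_0) = 0$, automatically $a'(p_0) = 0$.

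Next, I would extract a conjugation relation near $0$. Using local inverses of $a$ and $a'$ through $p_0$ I obtain a smooth local diffeomorphism $\phi : (-\epsilon, \epsilon) \to (-\epsilon', \epsilon')$ with $\phi(0) = 0$. The commuting actions, together with freeness of the right $H$-action on $a$-fibers, produce a unique group automorphism $\rho \in \mathrm{Aut}(\Z)$ satisfying $(n, 0) \cdot p_0 = p_0 \cdot (\rho(n), 0)$. Transporting this through the smooth local section $\sigma := (a|_W)^{-1}$ --- the resulting $H$-exponent takes values in the discrete set $\Z$, hence is locally constant in $x$ on the connected neighborhood of $0$, and equals $\rho(n)$ at $x = 0$ --- yields the local conjugation
\begin{equation*}
  \phi \circ \psi^n = \hat\psi^{\rho(n)} \circ \phi \quad \text{near } 0,
\end{equation*}
and in particular $\phi \circ \psi = \hat\psi^{\rho(1)} \circ \phi$ with $\rho(1) \in \{+1,-1\}$.

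Finally I would derive the contradiction via a dynamical invariant. For a smooth diffeomorphism $f$ of a neighborhood of $0 \in \R$ fixing $0$, call a side $S \in \{\R_{>0}, \R_{<0}\}$ \emph{attracting for $f$ at $0$} if on some neighborhood of $0$ the forward iterates of $f$ send every point of $S$ into $S$ and converge to $0$. Conjugation by a local diffeomorphism of $\R$ fixing $0$ preserves the number of attracting sides (even when the conjugation swaps the two sides), because applying $\phi$ to an attracting orbit of $f$ and using $\phi(0) = 0$ produces an attracting orbit of $g = \phi f \phi^{-1}$. Since $\xi = h\,\partial_x$ points strictly rightward on $\R \setminus \{0\}$, we have $\psi^k(y) \to 0$ as $k \to \infty$ for $y < 0$ but $\psi^k(y) \to \infty$ for $y > 0$, so $\psi$ has exactly one attracting side. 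By construction $\hat\psi$ moves every nonzero point strictly away from $0$, so $\hat\psi$ has zero attracting sides and $\hat\psi^{-1}$ has two. Since $\rho(1) \in \{\pm 1\}$, neither option matches $\psi$, so no biprincipal bibundle $P$ can exist; in particular, the identity on $\R/{\sim}$ does not lift to such a bibundle, and $G \ltimes \R$ and $H \ltimes \R$ are not Morita equivalent. The main obstacle will be the middle step: carefully justifying, from the bibundle axioms and smoothness of $\sigma$, that the $H$-exponent in the conjugation relation is locally constant in $x$, so the relation holds on a full neighborhood of $0$ rather than only at the point $0$ itself.
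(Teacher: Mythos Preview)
Your argument is correct, and the underlying obstruction---that $\psi$ has asymmetric dynamics at $0$ while $\hat\psi^{\pm 1}$ is symmetric there---is the same one the paper exploits. The executions differ in a couple of instructive ways. The paper uses that any principal bundle over $\R$ is globally trivial, takes a \emph{global} section $\sigma$ of $a$, and sets $\varphi := a'\sigma : \R \to \R$; this $\varphi$ is merely smooth, not a local diffeomorphism, so the intertwining exponent $\eta \in \Z$ is a priori arbitrary and the case $\eta = 0$ must be ruled out separately (it would force $\varphi$ to be constant). You instead work locally near the unique isotropic orbit, take $\phi$ to be a genuine local diffeomorphism (as $a'$ composed with a local inverse of $a$), and invoke the fact that a biprincipal bibundle induces an isomorphism of isotropy groups; this forces $\rho(1) \in \{\pm 1\}$ from the start and eliminates the degenerate case. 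Your ``attracting sides'' count is a clean conjugacy-invariant repackaging of the paper's endgame: the paper directly iterates $\varphi(\psi^k(x)) = \hat\psi^{k\eta}(\varphi(x))$ and chases limits as $k \to -\infty$, whereas you observe once and for all that the number of sides on which forward orbits converge to $0$ is preserved under local conjugacy. Your route is slightly more conceptual and does not rely on the global triviality of bundles over $\R$; the paper's is more hands-on but equally short.
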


\begin{proof}
   For a contradiction, suppose these groupoids are Morita equivalent. By Example \ref{ex:5}, an invertible bibundle $P: G \ltimes \R\to \R \rtimes H$ (we can view the $H$ action as a right action because $H$ is abelian) is given by the diagram
  \begin{equation*}
    \begin{tikzcd}
      & G \circlearrowright P \circlearrowleft H \ar[dl, "a"'] \ar[dr, "a'"]& \\
      G \circlearrowright \R & & \R \circlearrowleft H,
    \end{tikzcd}
  \end{equation*}
  where $a$ is a principal $H$-bundle, and is $G$-equivariant, and $a'$ is a principal $G$-bundle, and is $H$-equivariant, and the $G$ and $H$ actions on $P$ commute. Because they are bundles over $\R$, both $a$ and $a'$ are trivial. Choose a global section $\sigma$ of $a$. By Proposition \ref{prop:1},
  \begin{equation*}
  \varphi := a'\sigma:\R \to \R
\end{equation*}
descends to a diffeomorphism $\R/G \to \R/H$. By equivariance of $a$, both $\sigma(1 \cdot x)$ and $1 \cdot \sigma(x)$ are in the same $a$-fiber (note that $1$ is the generator of the $G$ and $H$ action). Therefore we have the smooth map
  \begin{equation*}
    \R \to P \times_{a} P , \quad x \mapsto (\sigma(1 \cdot x), 1 \cdot \sigma(x)).
  \end{equation*}
  By principality of $P \xar{a} \R$, this yields a smooth map $\eta:\R \to H$ given by
  \begin{equation*}
    \sigma(1 \cdot x) = (1 \cdot \sigma(x)) \cdot \eta(x),
  \end{equation*}
  and because $H$ is discrete, $\eta \in H$ is constant. Therefore
  \begin{equation*}
    \varphi(1 \cdot x) = \varphi(x) \cdot \eta, \text{ and so } \varphi(k \cdot x) = \varphi(x) \cdot (k\eta).
  \end{equation*}
  There are two cases.
  \begin{itemize}
  \item If $\eta = 0$, then $\varphi$ is $G$-invariant. By definition of the $G$ action, for each $x$, we have $k \cdot x \to 0$ as $k \to -\infty$, therefore $\varphi(k \cdot x) = \varphi(x) \to \varphi(0)$, and thus $\varphi(x) = \varphi(0)$ for all $x$. But this contradicts the fact $\varphi$ descends to a diffeomorphism $\R/G \to \R/H$.
  \item If $\eta \neq 0$, then we get a contradiction as follows: for $x$ such that $\operatorname{sign}(\eta) = -\operatorname{sign}(\varphi(x))$, 
    \begin{equation*}
      \varphi(k \cdot x) = \hat{\psi}^{k\eta}(\varphi(x)) = \psi^{\operatorname{sign}(\varphi(x))k\eta}(\varphi(x)) \to \pm \infty \text{ as } k \to -\infty
    \end{equation*}
    On the other hand, $\varphi(k\cdot x) \to \varphi(0) = 0$ as $k \to -\infty$, so we have our contradiction.
  \end{itemize}
\end{proof}

Furthermore,

\begin{proposition}
  \label{prop:8}
  $\R/{\sim}$ is not a diffeological quasifold.
\end{proposition}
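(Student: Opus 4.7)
The plan is to argue by contradiction. Suppose $\R/{\sim}$ is a diffeological quasifold, and pick a chart $F : U \to \msf{V}/\Gamma$ about $[0]$, with $\msf{V}$ open in $\R^n$ and $\Gamma \leq \Aff(\R^n)$ countable. Write $\pi : \R \to \R/{\sim}$ and $\pi_\Gamma : \msf{V} \to \msf{V}/\Gamma$ for the quotient maps, and fix $\tilde r \in \msf{V}$ with $\pi_\Gamma(\tilde r) = F([0])$.

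The first step is to produce smooth local lifts of the smooth maps $F\pi$ and $F^{-1}\pi_\Gamma$, yielding $\tilde f : (-\delta, \delta) \to \msf{V}$ with $\tilde f(0) = \tilde r$ and $\tilde g : \msf{W} \to \R$ on a neighbourhood $\msf{W}$ of $\tilde r$ with $\tilde g(\tilde r) = 0$. The composition $\tilde f \circ \tilde g$ is smooth and preserves $\Gamma$-orbits, since $\pi_\Gamma \circ \tilde f \circ \tilde g = F\pi \circ \tilde g = F F^{-1}\pi_\Gamma = \pi_\Gamma$. By Corollary \ref{cor:1}, it therefore agrees with some $\gamma_0 \in \Gamma$ near $\tilde r$. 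Because the image of $\tilde g$ has dimension at most $1$ while $\gamma_0$ is a rank-$n$ affine bijection, comparing derivatives forces $n = 1$, and I may write $\gamma_0(x) = A(x - \tilde r) + \tilde r$.

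The second step pins down $A$ using the flatness of $\psi$ at $0$. The composition $\tilde g \circ \tilde f$ is a smooth self-map of a neighbourhood of $0 \in \R$ preserving ${\sim}$-orbits, with $(\tilde g \circ \tilde f)(0) = 0$. On each side of $0$, continuity and the discreteness of ${\sim}$-orbits in $\R \setminus \{0\}$ force $\tilde g \circ \tilde f$ to equal some iterate $\psi^{k_\pm}$. Since $\psi$ has identity $\infty$-jet at $0$, each such iterate has derivative $1$ at $0$, so $(\tilde g \circ \tilde f)'(0) = 1 = \tilde g'(\tilde r) \tilde f'(0)$. Comparing with $(\tilde f \circ \tilde g)'(\tilde r) = A$ gives $A = 1$, so $\gamma_0 = \id$. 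Hence $\tilde f \circ \tilde g = \id$ near $\tilde r$, making $\tilde f$ and $\tilde g$ inverse local diffeomorphisms.

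The final step uses $\psi$ to produce the contradiction. Consider $\tilde f \circ \psi \circ \tilde g$: it is a smooth self-map of a neighbourhood of $\tilde r$, preserves $\Gamma$-orbits (since $\pi\psi = \pi$), and fixes $\tilde r$. By Corollary \ref{cor:1} it agrees with some affine $\gamma_1 \in \Gamma$ near $\tilde r$. Its derivative at $\tilde r$ is $\tilde f'(0) \cdot \psi'(0) \cdot \tilde g'(\tilde r) = \tilde f'(0) \cdot 1 \cdot \tilde g'(\tilde r) = 1$, forcing $\gamma_1 = \id$. Unpacking $\tilde f \circ \psi \circ \tilde g = \id$ via $\tilde g \circ \tilde f = \id$ gives $\tilde f \circ \psi = \tilde f$ near $0$, so by injectivity $\psi = \id$ near $0$, contradicting $\psi(x) > x$ for all $x \neq 0$. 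The hard part is recognising that the identity $\infty$-jet of $\psi$ at $0$ makes every affine approximation of an orbit-preserving map trivial: the rigidity supplied by Corollary \ref{cor:1} forces any candidate chart around $[0]$ to flatten $\psi$ itself to the identity, which it is not.
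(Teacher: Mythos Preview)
Your proof is correct and follows essentially the same route as the paper: lift $F$ and $F^{-1}$ locally, apply Corollary~\ref{cor:1} to $\tilde f\tilde g$ and $\tilde f\psi\tilde g$ to obtain affine elements $\gamma_0,\gamma_1\in\Gamma$, and then use $\psi'(0)=1$ and $\psi(0)=0$ to force these to agree, whence $\psi=\id$ near $0$. Your second step, establishing $\gamma_0=\id$ via an analysis of $\tilde g\circ\tilde f$ on the non-affine side, is an unnecessary detour: the paper simply observes that $\gamma_0$ and $\gamma_1$ share the same linear part $\tilde f'(0)\,\tilde g'(\tilde r)$ and both fix $\tilde r$, hence coincide, without ever needing either to equal the identity or invoking the orbit structure of $\langle\psi\rangle$ on $\R$. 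Your explicit rank argument reducing to $n=1$ is, however, a welcome addition that the paper glosses over (it tacitly takes $\Gamma\leq\Aff(\R)$ from the outset).
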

  \begin{proof}
  Seeking a contradiction, suppose $\R/{\sim}$ is a diffeological quasifold. Take an open neighbourhood $U$ of $[0]$ in $X$, a countable subgroup $\Gamma$ of $\Aff(\R)$, a $\Gamma$-invariant open subset $\msf{V}$ of $\R$, and a diffeomorphism $F:U \to \msf{V}/\Gamma$. Fix some $x_0 \in \msf{V}$ such that $F([0]) = [x_0]$.

  By the exact same argument from the first half of the proof of Lemma \ref{lem:2}, we may find locally defined lifts $f,s:\R \to \R$ of $F$ and $F^{-1}$, respectively, such that $f(0) = x_0$ and $s(x_0) = 0$. Then $fs$ is a locally defined map that preserves $\Gamma$ orbits, and fixes $x_0$.

The composition $f\psi s$ is then also a locally defined map that preserves $\Gamma$ orbits and fixes $x_0$. Fix an open interval $\msf{W}$ about $x_0$ such that both $fs$ and $f\psi s$ are defined on $\msf{W}$.

  By Lemma \ref{lem:1}, there are $\gamma, \delta \in \Gamma$ such that $fs = \gamma$ and $f\psi s = \delta$ on $\msf{W}$. Therefore $(fs)' = \gamma'$, and in particular $s'$ is non-vanishing on $\msf{W}$. It follows that $s$ is a diffeomorphism on a neighbourhood of $x_0$. Similarly, $f'$ does not vanish on $s(\msf{W})$, and so $f$ is a diffeomorphism on a neighbourhood of $0$.

Since $\psi'(0) = 1$,
  \begin{equation*}
    \gamma' = (fs)'(x_0) = (f \psi s)'(x_0) = \delta'.
  \end{equation*}
  Thus $\gamma$ and $\delta$ differ only by a translation; because $\psi(0) = 0$, we see that $\gamma = \delta$. In other words, $fs = f \psi s$ on $\msf{W}$. Since $f$ and $s$ are diffeomorphisms, we conclude that $\psi = \id$ in a neighbourhood of $0$, which contradictions the definition of $\psi$. Therefore no such diffeomorphism $F$ exists, and $\R/{\sim}$ is not a diffeological 1-quasifold.
\end{proof}

We suspect that the key feature of the $G$ action on $\R$ allowing for Propositions \ref{prop:7} and \ref{prop:8} is that the $G$-action is not \emph{jet-determined}; all the $\psi^k$ have the same jet at $0$, yet are different diffeomorphisms for different $k$. This is what allows for the existence of the smooth function $\hat{\psi}$. Conversely, given a jet-determined action of a discrete group $\Gamma$ on $\R^n$, at the time of writing we do not know if a lemma analogous to Lemma \ref{lem:1} holds. 

The $G$ action on $\R$ also gives rise to a counter-example in foliation theory. We will assume familiarity with foliations and their holonomy.

\begin{proposition}
  \label{prop:9}
  There exists a foliated manifold $(M, \cl{F})$ for which the quotient space $M/\cl{F}$ is not a diffeological quasifold.
\end{proposition}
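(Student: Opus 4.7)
The plan is to realize the pathological space $\R/{\sim}$ from Proposition \ref{prop:8} as the leaf space of a suspension foliation built from the diffeomorphism $\psi$ constructed earlier in the section. First I would form the smooth manifold $M := (\R \times \R)/\Z$, where $\Z$ acts on $\R \times \R$ by $n \cdot (x, t) := (\psi^n(x), t - n)$. This action is free (forced by the translation in the second coordinate) and properly discontinuous (since only finitely many integer translations in $t$ can meet a given compact set), so $M$ is a smooth $2$-manifold. The product foliation of $\R \times \R$ by the vertical lines $\{x\} \times \R$ is $\Z$-invariant, since $n \cdot (\{x\} \times \R) = \{\psi^n(x)\} \times \R$, and therefore descends to a $1$-dimensional foliation $\cl{F}$ on $M$ whose leaves are the images of the vertical lines under the quotient map $p:\R \times \R \to M$.

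Next I would identify the leaf space $M/\cl{F}$ diffeologically with $\R/{\sim}$. Two points $[x_1, t_1]$ and $[x_2, t_2]$ in $M$ lie in a common leaf if and only if $x_2 = \psi^n(x_1)$ for some $n \in \Z$, so the assignment $\phi:M/\cl{F} \to \R/G$ sending $[x, t]_{\cl{F}} \mapsto [x]_G$ is a well-defined bijection. The composition of the plot $x \mapsto [x, 0]$ of $M$ with the quotient $M \to M/\cl{F}$, followed by $\phi$, is exactly $\pi_G:\R \to \R/G$, so $\phi^{-1}$ is diffeologically smooth. Conversely, any plot of $M/\cl{F}$ is locally of the form $\pi_{\cl{F}} \circ Q$ for a plot $Q$ of $M$; this $Q$ locally lifts to $\R \times \R$, and projection to the first factor produces a plot of $\R$ whose composition with $\pi_G$ equals $\phi \circ \pi_{\cl{F}} \circ Q$, showing that $\phi$ is smooth.

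With the diffeological diffeomorphism $\phi:M/\cl{F} \to \R/{\sim}$ in hand, Proposition \ref{prop:8} immediately implies that $M/\cl{F}$ is not a diffeological quasifold, completing the proof. The only step requiring any care is the diffeological identification of $M/\cl{F}$ with $\R/G$; everything else, including that $M$ is a Hausdorff manifold and that the vertical foliation descends, follows routinely from the suspension construction. I expect no substantive obstacle, since this identification is standard and the heavy lifting has already been done in Proposition \ref{prop:8}.
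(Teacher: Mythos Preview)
Your proposal is correct and follows essentially the same suspension construction as the paper: form $M = \R^2/\Z$ with the $\Z$-action combining translation in one factor with iteration of $\psi$ in the other, descend the trivial line foliation, and identify the leaf space with $\R/{\sim}$ so that Proposition~\ref{prop:8} applies. The only difference is that the paper identifies $M/\cl{F}$ with $\R/{\sim}$ by invoking the holonomy pseudogroup on a transversal and the Morita equivalence between the holonomy groupoid and its restriction, whereas you give a direct diffeological argument via explicit local lifts; your route is more self-contained, while the paper's is shorter for readers already comfortable with holonomy groupoids.
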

\begin{proof}
Consider $\R^2$ with coordinates $(t,x)$, equipped with the foliation $\cl{F}$ spanned by $\frac{\partial}{\partial t}$. The leaves of $\cl{F}$ are the horizontal lines $\R \times \{x\}$. We have a $\Z$ action on $\R^2$ given by
\begin{equation*}
  k \cdot (t,x) := (t+k, \psi^k(x)),
\end{equation*}
where $\psi$ is the flow from the beginning of this section. This action is free, properly discontinuous, and preserves $\cl{F}$. Therefore $M := \R^2/\Z$ is a manifold, the quotient $\pi:\R^2 \to M$ is a covering map, and $\cl{F}/\Z := d\pi(\cl{F})$ is a foliation on $M$. The leaves of $\cl{F}/\Z$ are of the form $\pi\left(\R \times \{\psi^k(x)\} \mid k \in \Z\right)$, for $x \in \R$. A connected total transversal to $\cl{F}/\Z$ is given by $T := \pi(\{0\} \times \R) \cong \R$.

The holonomy pseudogroup on $T$ associated to $\cl{F}/\Z$ is the pseudogroup generated by the group $\{\psi^k\}$. In particular, the quotient $M/(\cl{F}/\Z)$ is diffeologically diffeomorphic to $T/\{\psi^k\}$.\footnote{This is because $T/\{\psi^k\}$ is the orbit space of the pullback of the holonomy groupoid associated to $\cl{F}/\Z$ to the transversal $T$, and these groupoids are Morita equivalent.}  But as we have seen in Proposition \ref{prop:7}, $T/\{\psi^k\}$ is not a diffeological quasifold. Therefore neither is $M/(\cl{F}/\Z)$.
\end{proof}

Proposition \ref{prop:9} raises the following question: for which foliations $\cl{F}$ is the leaf space $M/\cl{F}$ a diffeological quasifold? Some candidates include the null foliation on a compact, connected presymplectic toric manifold, in the sense of Ratiu and Zung \cite{RZ2019} or Lin and Sjamaar \cite{LS2019}, or \emph{Riemannian} foliations, as defined in \cite{MM2003}. We leave sorting through these candidates to another paper.

\begin{bibdiv}
  \begin{biblist}
    \bib{BH2011}{article}{
      title = {Convenient categories of smooth spaces},
      author = {Baez, John},
      author = {Hoffnung, Alexander E.},
      journal = {Trans. Amer. Math. Soc.},
      volume = {363},
      issue = {11},
      date = {2011},
      pages = {5789--5825}
    }
    
    \bib{BP2018}{article}{
      title = {Nonrational symplectic toric cuts},
      author = {Battaglia, Fiammetta},
      author = {Prato, Elisa},
      journal = {Internat. J. Math.},
      volume = {29},
      issue = {10},
      date = {2018},
      pages = {19 pp}
    }
    
    \bib{BP2019}{article}{
      title = {Nonrational symplectic toric reduction},
      author = {Battaglia, Fiammetta},
      author = {Prato, Elisa},
      journal = {Geom. Phys.},
      volume = {135},
      date = {2019},
      pages = {98-105}
    }
    
    \bib{BPZ2019}{article}{
      title = {Hirzebruch surfaces in a one-parameter family},
      author = {Battaglia, Fiammetta},
      author = {Prato, Elisa},
      author = {Zaffran, Dan},
      journal = {Boll. Unione Mat. Ital.},
      volume = {12},
      issue = {1-2},
      date = {2019},
      pages = {293-305}
    }

    \bib{DIZ1985}{article}{
      title = {Exemples de groupes diff\'{e}ologiques: flots irrationnels sur le tore},
      author = {Donato, Paul},
      author = {Iglesias-Zemmour, Patrick},
      journal = {C. R. Acad. Sci. Paris S\"{e}r. I Math.},
      volume = {301},
      date = {1985},
      issue = {4},
      pages = {127--130}
    }

    \bib{HM2004}{article}{
      title = {Presentations of noneffective orbifolds},
      author = {Henriques, Andre},
      author = {Metzler, David},
      journal = {Trans. Amer. Math. Soc.},
      volume = {356},
      date = {2004},
      issue = {6},
      pages = {2481-2499}
    }

    \bib{H2020}{article}{
      title = {Toric symplectic stacks},
      author = {Hoffman, Benjamin},
      journal = {Adv. Math.},
      volume = {368},
      date = {2020},
      pages = {43 pp}
    }

    \bib{IZ2013}{book}{
      title = {Diffeology},
      author = {Iglesias-Zemmour, Patrick},
      series = {Mathematical Surveys and Monographs},
      volume = {185},
      publisher = {American Mathematical Society},
      location = {Providence},
      date = {2013}
    }

    \bib{IZKZ2010}{article}{
      title = {Orbifolds as diffeologies},
      author = {Iglesias-Zemmour, Patrick},
      author = {Karshon, Yael},
      author = {Zadka, Moshe},
      journal = {Trans. Amer. Math. Soc.},
      volume = {362},
      issue = {6},
      date = {2010},
      pages = {2811--2831}
    }
    
    \bib{IZL2018}{article}{ 
      title = {Noncommutative geometry and diffeology: the case of orbifolds},
      author = {Iglesias-Zemmour, Patrick},
      author = {Laffineur, Jean-Pierre},
      journal = {J. Noncommut. Geom.},
      volume = {12},
      issue = {4},
      date = {2018},
      pages = {1551--1572}
    }

    \bib{IZP2020}{unpublished}{ 
      title = {Quasifolds, diffeology, and noncommutative geometry},
      author = {Iglesias-Zemmour, Patrick},
      author = {Prato, Elisa},
      date = {2020},
      status = {preprint, to appear in J. Noncommut. Geom.},
      note = {arXiv:2005.09283v2}
    }

    \bib{K1953}{article}{
      title = {Sur certains groupes de transformations de Lie},
      author = {Koszul, Jean-Louis},
      conference = {
        title = {G\'{e}om\'{e}trie diff\'{e}rentielle},
        address = {Strasbourg, France},
        date = {1953}
      },
      book = {
        series = {Colloques Internationaux du Centre National de la Recherche Scientifique},
        volume = {52},
        publisher = {\'{E}ditions du Centre National de la Recherche Scientifique},
        address = {Paris},
        date = {1953},
      },
      pages = {137--141}
    }

    \bib{KW2016}{article}{
      title = {Basic forms and orbit spaces: a diffeological approach},
      author = {Karshon, Yael},
      author = {Watts, Jordan},
      journal = {SIGMA Symmetry Integrability Geom. Methods Appl.},
      volume = {12},
      date = {2016},
      pages = {19 pp}
    }

    \bib{L2010}{article}{
      title = {Orbifolds as stacks?},
      author = {Lerman, Eugene},
      journal = {Enseign. Math. (2)},
      volume = {56},
      issue = {3--4},
      date = {2010},
      pages = {315--363}
    }

    \bib{LS2019}{article}{
      title = {Convexity properties of presymplectic moment maps},
      author = {Lin, Yi},
      author = {Sjamaar, Reyer},
      journal = {J. Symplectic Geom.},
      volume = {17},
      date = {2019},
      issue = {4},
      pages = {1159--1200}
    }

    \bib{McDuff-2006-book}{article}{
      title = {Notes on Kuranishi Atlases},
      author = {McDuff, Dusa},
      book = {
        title = {Virtual Fundamental Cycles in Symplectic Topology},
        series = {Mathematical Surveys and Monographs},
        volume = {237},
        publisher = {American Mathematical Society},
        address = {Providence, Rhose Island},
        date = {2019}
      },
      pages = {1--109}
    }


    \bib{Miy2022}{article}{
      title = {The basic de Rham complex of a singular foliation},
      author = {Miyamoto, David},
      date = {2022},
      journal = {IMRN},
      note = {rnac044}
    }
    
    \bib{MM2003}{book}{
      title = {Introduction to foliations and Lie groupoids},
      author = {Moerdijk, Ieke},
      author = {Mrc\u{u}n, Janez},
      date = {2003},
      publisher = {Cambridge University Press},
      address = {Cambridge},
      series = {Cambridge Studies in Advanced Mathematics},
      volume = {91}
    }

    \bib{P1961}{article}{
      title = {On the existence of slices for actions of non-compact Lie groups},
      author = {Palais, Richard S.},
      date = {1961},
      journal = {Ann. of Math.},
      volume = {73},
      issue = {2},
      pages = {295--323}
    }

    \bib{P1999}{article}{
      title = {On a generalization of the notion of orbifold},
      author = {Prato, Elisa},
      journal = {C. R. Acad. Sci. Paris. S\'{e}r. I Math.},
      volume = {328},
      issue = {10},
      date = {1999},
      pages = {887--890}
    }

    \bib{P2001}{article}{
      title = {Simple non-rational convex polytopes via symplectic geometry},
      author = {Prato, Elisa},
      journal = {Topology},
      volume = {40},
      issue = {5},
      date = {2001},
      pages = {961-975}
    }

    \bib{RZ2019}{article}{
      title = {Presymplectic convexity and (ir)rational polytopes},
      author = {Ratiu, Tudor},
      author = {Zung, Nguyen Tien},
      journal = {J. Symplectic Geom.},
      volume = {17},
      issue = {5},
      date = {2019},
      pages = {1479--1511}
    }
    
    \bib{W2017}{article}{
      title = {The differential structure of an orbifold},
      author = {Watts, Jordan},
      journal = {Rocky Mountain J. Math.},
      volume = {47},
      issue = {1},
      date = {2017},
      pages = {289--327}
    }

    \bib{Z2010}{thesis}{
      title = {Orbifolds, Chapter 5 of},
      author = {Zoghi, Masrour},
      organization = {University of Toronto},
      type = {Ph.D. Thesis},
      date = {2010}
    }
  \end{biblist}
\end{bibdiv}

{\ifdebug \newpage \else \end{document} \fi}

\section{Extras}
\label{sec:extras}

\smallskip\noindent\textbf{Potential Journals}
\begin{itemize}
\item Indagationes
\item IMRN
\item Annalen
\item Transformation groups
\item Compositio
\item crelle journal
\item Illinois journal of mathematics
\item Pacific journal of mathematics
\item Journal of geometry and physics
\end{itemize}

We chose Selecta Mathematics. MCS is 22A22 and 58H05

\newpage
\smallskip\noindent\textbf{Other definition of principal bibundle}

\begin{lemma}
\label{lem:11}
  Given a principal right $H$-bundle $P \xar{\pi} B$, the map $\pi$ is an $H$-invariant surjective submersion, and the map
  \begin{equation}
\label{eq:7}
    P \tensor[_a]{\times}{_t} H \to P \times_\pi P, \quad (p, h) \mapsto (p, p\cdot h)
  \end{equation}
  is a diffeomorphism. Conversely, given a right $H$ action on $P$ and a $H$-invariant surjective submersion $P \xar{\pi} B$ such that \eqref{eq:7} is diffeomorphism, the data $P \xar{\pi} B$ with the $H$ action is a principal right $H$ bundle.
\end{lemma}

\begin{proof}
  For the first direction, the existence of the local model \eqref{eq:1} shows that $\pi$ is an $H$-invariant surjective submersion. As for the map \eqref{eq:7}, it is injective because the $H$ action is free, and surjective because $H$ acts transitively on the $\pi$-fibers. Therefore its inverse is well-defined. To show it is smooth, we give its local expression and observe this is smooth. Fix $(p',, q')$ such that $\pi(p') = b' = \pi(q')$. Choose $U \ni b'$, $\sigma$, and $\Phi$ from the definition of principal bundle, such that we have \eqref{eq:1}. Denote $\Phi(p) = (\pi(p), \phi(q))$. Consider the map
  \begin{equation*}
    P_U \times_\pi P_U \to P_U \tensor[_a]{\times}{_t} H, \quad (p, q) \mapsto (p, \phi(p)^{-1}\phi(q)).
  \end{equation*}
  If this is well-defined, then it is smooth and the inverse of \eqref{eq:7} about $(p', q')$. First, by definition of the target of $\Phi$, we have $a\sigma\pi(p) = t(\phi(p))$. So if $(p, q) \in P \times_\pi P$, then $t(\phi(p)) = t(\phi(q))$, and thus the composition on our proposed map makes sense. Now, since $\Phi$ is $H$-equivariant, we have
  \begin{equation*}
    (\pi(p), \phi(p)) = \Phi(p) = \Phi(p \cdot 1_{a(p)}) = (\pi(p), \phi(p) 1_{a(p)}).
  \end{equation*}
  In particular, $s(\phi(p)) = a(p)$. So $a(p) = t(\phi(p)^{-1}\phi(q))$, so our proposed map is indeed well-defined.

  Conversely, assuming \eqref{eq:7} is a diffeomorphism, it is immediate that $H$ acts freely and its orbits are the $\pi$-fibers. Now we must give local trivializations. Because $\pi$ is a surjective submersion, for fixed $b \in B$, there is an open neighbourhood $U$ and section $\sigma:U \to P$ of $\pi$. For $p \in P_U$, denote by $h_p$ the arrow in $H$ such that $p \cdot h_p = \sigma(\pi(p))$. The assignment $p \mapsto h_p$ is well-defined and smooth because we extract $h_p$ from the diffeomorphism \eqref{eq:7}. Then, consider 
  \begin{align*}
    \Phi: P_U \to U \tensor[_{a\sigma}]{\times}{_t} H, &\quad p \mapsto (\pi(p), h_p^{-1}) \\
    U \tensor[_{a\sigma}]{\times}{_t} H \to P_U, &\quad (b, h) \mapsto \sigma(b) \cdot h
  \end{align*}
  These are both well-defined and smooth, and are inverses of each other. So $\Phi$ is a diffeomorphism. It is equivariant because $h_{p\cdot h} = h^{-1}h_p$, so $\Phi(p \cdot h) = (\pi(p), h_p^{-1}h) = \Phi(p) \cdot h$.
\end{proof}

\newpage
\smallskip\noindent\textbf{Morphisms of principal bibundles}

We also have the notion of morphisms between principal bundles.

\begin{definition}
  \label{def:23}
Given two principal $H$ bundles $P \xar{\pi} B$ and $Q \xar{\pi'} B$, over the same base, we call a smooth map $\alpha:P \to Q$ a \emph{bundle morphism} if $\pi' \alpha = \pi$, and $\alpha$ is $H$-equivariant.
\end{definition}

\begin{remark}
\label{rem:11}
  A bundle morphism $\alpha:P \to Q$ between principal $H$ bundles $P$ and $Q$ is necessarily a diffeomorphism. See, for example, \cite{MM2003}, Remark 5.34 (5).
\end{remark}
\begin{proof}
  Pick $p_0 \in P$ and denote $q_0:= \alpha(p_0)$. Say $H$ acts on $P$ with anchor $a$, and on $Q$ with anchor $b$. By principality, take a neighbourhood $U$ of $\pi(p_0) = \pi'(q_0)$, sections $\sigma$ of $\pi$ and $\tau$ of $\pi'$, and bundle morphisms $\Phi:P_U \to (a\sigma)^*(H \xar{t} H_0)$ and $\Psi:Q_U \to (a\sigma)^*(H \xar{t} H_0)$. By equivariance, $\alpha$ restricts to a map $P_U \to Q_U$. Denote $\ti{\alpha} := \Psi \alpha \Phi^{-1}$.

  Because $\ti{\alpha}$ is a bundle map, it preserves the projections, so it takes $(x, 1_{a\sigma(x)}) \in U \tensor[_{a\sigma}]{\times}{_t} H$ to some element $(x, f(x)) \in U \tensor[_{b\tau}]{\times}{_t} H$. The map $f:U \to H$ is smooth, and uniquely determines $\ti{\alpha}$, because by equivariance
  \begin{equation*}
    \ti{\alpha}(x,h) = \ti{\alpha}(x, 1_{a\sigma(x)}) \cdot h = (x, f(x) \cdot h).
  \end{equation*}
  Then $(y,h') \mapsto (y, f(y)^{-1}\cdot h')$ is the smooth local inverse of $\alpha$. This shows $\alpha$ is a local diffeomorphism.

  To see $\alpha$ is injective, suppose $\alpha(p) = \alpha(p')$. Then $\pi(p) = \pi(p')$, so $p = p' \cdot h$ for some unique $h$. But then $\alpha(p) = \alpha(p') \cdot h$, and the action is free so $h = 1_{a(p')}$ and $p = p'$. To see $\alpha$ is surjective, suppose $q \in Q$. Then $\pi'(q) = \pi(p)$ for some $p$, and $\alpha(p)$ and $q$ are in the same $\pi'$ fiber. Since the $\pi'$ fibers are $H$-orbits, there is some $h$ such that $\alpha(p \cdot h) = \alpha(p) \cdot h = q$
\end{proof}

\newpage
\smallskip\noindent\textbf{Pullbacks of bibundles}

We can use the pullback of principal bundles to pullback bibundles.

  \begin{definition}[Pullback bibundle]
  \label{def:24}
  The \define{pullback} of a bibundle $P:G \to H$ along a smooth submersion $\phi:M \to G_0$ is a bibundle $\phi^*P:\phi^*G \to H$, where the manifold $\phi^*P$ and and principal right $H$ action are given by the pullback $\phi^*(P\xar{a} G_0)$, and the anchor for the left $\phi^*G$ action is $\pr_1$, and the multiplication is $(m \xar{\fk{g}} n) \cdot (m, p) := (n, g \cdot p)$, where $\phi(m) \xar{g} \phi(n)$ is the arrow in $G$ corresponding to $\fk{g}$.
  \eod
\end{definition}

In particular, if $U \subseteq G_0$ is an open subset, and $\iota:G|_U \to G$ is the inclusion functor (where $G|_U$ has arrows $s^{-1}(U) \cap t^{-1}(U)$), we may pull back a bibundle $P:G \to H$ to a bibundle $P_U:G|_U \to H$. This lets us define locally invertible bibundles.

\newpage
\smallskip\noindent\textbf{Old proof of Proposition \ref{prop:5}. It is much more confusing and perhaps wrong.}

  We use the notation from the previous section, specifically Definition \ref{def:21}. We proceed in three steps.
\begin{enumerater}
\item Given a local diffeomorphism of quasifold groupoids $f:X \to X'$, and atlases $\cl{A}$ and $\cl{A}'$ of $X$ and $X'$, we construct a locally invertible bibundle $Q:\Gamma(\cl{A}) \to \Gamma^{X'}(\cl{A}')$ that lifts $\ol{\Phi'} \circ f \circ (\ol{\Phi})^{-1}$.
\item Given an effective quasifold groupoid $G$, form a covering groupoid $\fk{G}$ (that depends on a quasifold atlas) that is Morita equivalent to $G$, and show this is isomorphic to $\Gamma^{G/G_0}(\cl{A})$. This isomorphism descends to the identity on orbit spaces.
\item Assemble the bibundles from the previous two steps into $P$.
\end{enumerater}

\begin{proposition}
\label{prop:10}
Suppose $X$ and $X'$ are diffeological quasifolds with countable atlases $\cl{A}$ and $\cl{A}'$, and $f:X \to X'$ is a local diffeomorphism. We may construct an invertible bibundle $Q: \Gamma(\cl{A}) \to \Gamma^{X'}(\cl{A}')$, such that $f = (\ol{\Phi'})^{-1} \circ \ol{Q} \circ \ol{\Phi}$, in the notation of Lemma \ref{lem:6}. Moreover, if $f$ is a diffeomorphism, then $Q$ is invertible. In that case, $\Gamma(\cl{A})$ and $\Gamma^{X'}(\cl{A}')$ are Morita equivalent.
\end{proposition}

\begin{proof}
  Fix atlases of $X$ and $X'$,
  \begin{equation*}
    \cl{A} = \{F_i:U_i \to V_i/\Gamma_i\}\text{ and }\cl{A}' = \{F_j':U_j' \to V_j'/\Gamma_j'\}.
  \end{equation*}
  By assumption on $f$, we can choose a countable cover $W_\alpha$ of $X$ such that the restrictions $f_\alpha:W_\alpha \to f(W_\alpha)$ are diffeomorphisms.  For each $j,\alpha$, choose a $\Gamma_j$-invariant open subset $V_{j\alpha}' \subseteq V_j'$ such that $V_{j\alpha}'/\Gamma_j = F_j'(U_j \cap f(W_\alpha))$. We can choose such $V_{j\alpha}'$ because the right hand set is open in $V_j/\Gamma_j$. The \define{pullback} atlas
  \begin{equation*}
    f^*\cl{A} := \left\{F_j' \circ f_\alpha : f_\alpha^{-1}(U_j') \to V_{j\alpha}'/\Gamma_j'\right\}
  \end{equation*}
  is a countable quasifold atlas of $X$. The following diagram summarizes our situation.
  \begin{equation*}
    \begin{tikzcd}
      V_i \ar[d,"\pi_i"] \ar[dr, "\Phi_i"] & & &  &  & V_{j\alpha}' \ar[d, "\pi_j'"] \ar[dl, "\Phi_j'"'] \ar[dll, bend right, "f_\alpha^{-1} \Phi_j'"'] \\
      V_i/\Gamma_i &  \ar[l, "F_i", "\cong"'] U_i \ar[r, "\subseteq"'] & X &  \ar[l, "\supseteq"] f_\alpha^{-1}(U_j') \ar[r, "f_\alpha"'] & U_j' \cap f(W_\alpha) \ar[r, "F_j'"', "\cong"] & V_{j\alpha}'/\Gamma_j'
    \end{tikzcd}
  \end{equation*}

 For the rest of this proof, we replace $\Gamma^X$ with simply $\Gamma$. We will
  \begin{itemize}
  \item find a bibundle $Q_0:\Gamma(\cl A) \to \Gamma(f^*\cl{A}')$, and show it is invertible,
  \item establish a functor $\iota:\Gamma(f^*\cl{A}') \to \Gamma(\cl{A}')$ satisfying the assumptions of Lemma \ref{lem:4}, and show it is the identity if $f$ is a diffeomorphism,
  \item show that $Q := \langle \iota \rangle \circ Q_0$ is the desired bibundle.
  \end{itemize}
  The disjoint union $\cl{A} \sqcup f^*\cl{A}'$ is another atlas of $X$. Then, $\diffloc^{\cl{A} \sqcup f^*\cl{A}'}(V_{j\alpha}', V_i)$ is defined in Definition \ref{def:21} as the set of transitions $\ti{\phi}$ taking $V_{j\alpha}' \to V_i$ such that $f_\alpha^{-1} \Phi_j' = \Phi_i \circ \ti{\phi}$. Define\footnote{we are abusing notation: the symbols $P_0$ and $P$ no longer refer to pseudogroups, as they once did.} 
  \begin{equation*}
    Q_0 := \{\germ_{x'} \ti{\phi} \mid \ti{\phi} \in \diffloc^{\cl{A} \sqcup f^*\cl{A}'}(V_{j\alpha}', V_i) \text{ for some } i,j,\alpha, \text{ and } x' \in \dom \ti{\phi}\}.
  \end{equation*}
  This is an open subset, hence a submanifold, of $\Gamma(\cl{A} \sqcup f^*\cl{A}')$: around each $\germ_{x'}\ti \phi \in Q_0$, we can fit the open set $\{\germ_{x'} \ti{\phi} \mid x' \in \dom \ti{\phi}\}$. Let $\Gamma(\cl{A})$ act on $Q_0$ from the left, with anchor map the restriction of the target map $t$ of $\Gamma(\cl{A} \sqcup f^*\cl{A}')$ to $Q_0$, and multiplication given by composition of germs. Note that composition of germs is smooth by Remark \ref{prop:3}. Similarly, let $\Gamma(f^*\cl{A}')$ act on $Q_0$ from the right, along the anchor given by the source map $s$. In a diagram,
  \begin{equation*}
    \begin{tikzcd}
      & \Gamma(\cl{A}) \tensor[_s]{\times}{_{a_L}} Q_0 \ar[dl] \ar[dr, "\text{mult}"] &  & Q_0 \tensor[_{a_R}]{\times}{_t} \Gamma(f^*\cl{A}') \ar[dl, "\text{mult}"'] \ar[dr] &  \\
      \Gamma(A) \ar[dr, "t"]  &  & Q_0 \ar[dl, "t"'] \ar[dr, "s"] &  & \Gamma(f^*\cl{A}') \ar[dl, "s"'] \\
      & \bigsqcup V_i & & \bigsqcup V_{j\alpha}' .&
    \end{tikzcd}
  \end{equation*}
Because composition of germs is associative, the actions commute. The anchor maps $t$ and $s$ are also right and left invariant, respectively. It remains to show that $Q_0$ is an invertible bibundle. We will only show $Q_0 \xar{t} \bigsqcup V_i$ is right $\Gamma(f^*\cl{A}')$-principal; it is similar to show that $Q_0 \xar{s} \bigsqcup V_{j\alpha}'$ is left $\Gamma(\cl{A})$-principal.

Fix $x_0 \in V_i$, and $\tilde{\phi_0}$ a transition in $\Diff_{\text{loc}}^{\cl{A} \sqcup f^*\cl{A}'}(V_{j\alpha}',V_i)$ with $x_0$ in its image. Set $\cl{U} := \dom \tilde{\phi_0}^{-1}$, and take the section of $t$ given by
\begin{equation*}
  \sigma: \cl{U} \to P_0, \quad x \mapsto \germ_{\tilde{\phi_0}^{-1}(x)}\tilde{\phi_0}.
\end{equation*}
Define the map $\Phi$, indicated in the diagram below,
\begin{equation*}
  \begin{tikzcd}
    t^{-1}(\cl{U}) \ar[r, "\Phi"] \ar[d, "t"]& \cl{U} \fiber{s\sigma}{t} \Gamma(f^*\cl{A}') \ar[dl, "\pr_1"] \\
    \cl{U} &
  \end{tikzcd}
\end{equation*}
by
\begin{equation*}
  \Phi(\germ_{x'}\tilde{\phi}) := (\tilde{\phi}(x'), \germ_{x'} \tilde{\phi_0}^{-1}\tilde{\phi}), \quad \Phi^{-1}(x, \germ_{x'}\psi) = \germ_{x'}\tilde{\phi_0}\psi
\end{equation*}
By following the definitions, one can check that $\Phi$ is a smooth equivariant map with smooth inverse $\Phi^{-1}$. Therefore we have defined the invertible bibundle $Q_0$.

  Now, we define $\iota:\Gamma(f^*\cl{A}') \to \Gamma(\cl{A}')$. Smoothly map $\bigsqcup V_{j\alpha}'$ into $\bigsqcup V_j'$ by viewing $V_{j\alpha}' \subseteq V_j'$ and $V_{k\beta}' \subseteq V_k'$. This identification also lets us map $\diffloc^{\cl{A} \sqcup f^*\cl{A}'}(V_{j\alpha}', V_{k\beta}')$ into $\diffloc^{\cl{A}'}(V_j', V_k')$. We land in $\diffloc^{\cl{A}'}(V_j', V_k')$ because
  \begin{equation*}
    f_\beta^{-1}\Phi_k\phi' = f_\alpha^{-1}\Phi_j' \implies \Phi_k\phi' = \Phi_j', \text{ by applying } f.
  \end{equation*}
  This defines the functor $\iota:\Gamma(f^*(\cl{A}')) \to \Gamma(\cl{A}')$. On $V_{j\alpha}'$, under the identification $V_{j\alpha}' \subseteq V_j'$, this functor restricts to the inclusion of open subsets $\Gamma(\diffloc^{\cl{A}'}(V_{j\alpha}')) \to \Gamma(\diffloc^{\cl{A}'}(V_j'))$. Hence $\iota$ satisfies the conditions of Lemma \ref{lem:4}. Note that, if $f$ is a diffeomorphism, then we can take our cover by the $W_\alpha$ to consist of a single $W_\alpha = X$, and $\iota$ is the identity on the base and arrows.

  Finally, we show $Q:= \langle \iota \rangle \circ Q_0$ descends to $f$. Consider the commutative diagram from Proposition \ref{prop:1} augmented by the functor $\iota$.
  \begin{equation*}
    \begin{tikzcd}
      &  & Q_0 \ar[dl, "a_L = t"'] \ar[dr, "a_R = s"] &  & & \\
      & \bigsqcup V_i \ar[dl, "\bigsqcup \Phi_i"'] \ar[d, "\pi"] &  & \bigsqcup V_{j\alpha}' \ar[r, "\iota"] \ar[d] & \bigsqcup V_j' \ar[dr, "\bigsqcup \Phi_j'"] \ar[d, "\pi'"'] & \\
      X & \bigsqcup V_i / \Gamma(\cl{A}) \ar[l, "\ol{\Phi}"] \ar[rr, "\ol{P_0}"] &  & \bigsqcup V_{j\alpha}' / \Gamma(f^*\cl{A}') \ar[r, "\ol{\iota}"] & \bigsqcup V_j'/\Gamma(\cl{A}') \ar[r, "\ol{\Phi'}"'] & X'.
    \end{tikzcd}
  \end{equation*}
  Note that $\ol{\iota}$ is precisely the map induced by the bibundle $\langle \iota \rangle$. Fix arbitrary $u \in X$, and say $u = \Phi_i(x)$, where $x \in V_i$.  Now take $\germ_{x'}\ti{\phi} \in Q_0$ such that $\ti{\phi}(x') = t(\germ_{x'}\ti{\phi}) = x$, where we say $\ti{\phi} \in \diffloc^{\cl{A} \sqcup f^*\cl{A}'}(V_{j\alpha}', V_i)$. Then by commutativity and the definition of $\diffloc^{\cl{A} \sqcup f^*\cl{A}'}(V_{j\alpha}', V_i)$, the composition along the bottom equals
  \begin{equation*}
    \Phi_j'(x') = f_\alpha (f_\alpha^{-1}\Phi_j')(x') = f_\alpha(\Phi_i\ti{\phi})(x') = f_\alpha(\Phi_i(x)) = f_\alpha(u) = f(u)
  \end{equation*}
  Therefore $Q := \langle \iota \rangle \circ Q_0$ is a bibundle for which $\ol{\Phi'} \circ \ol{Q} \circ \ol{\Phi} = f$. Moreover, $Q_0$ is invertible, and $\langle \iota \rangle$ is locally invertible by Lemma \ref{lem:4}, hence $Q$ is locally invertible. If $f$ is a diffeomorphism, then $\langle \iota \rangle = \langle \id_{\Gamma(\cl{A}')} \rangle$, which is invertible, hence $Q$ is invertible.
\end{proof}

Now we carry out step (ii) of our plan. First we show that effective quasifold groupoids are isomorphic to germ groupoids of a special type.

Now we treat the covering groupoid $\fk{G}$.

\begin{definition}[A covering groupoid]
  \label{def:25}
  Fix a quasifold groupoid $G$, with countable quasifold atlas $\cl{A} = \{F_i:G|_{U_i} \to \Gamma_i \ltimes V_i\}$. Denote by $\fk{G}$ the pullback of $G$ by $\bigsqcup F_i^{-1}:\bigsqcup V_i \to G_0$. Its base space is $\bigsqcup V_i$, and the arrows between $x \in V_i$ and $y \in V_j$ are the arrows from $F_i^{-1}(x) \to F_j^{-1}(y)$.
\end{definition}

\begin{proposition}
  \label{prop:11}
  If $G$ is an effective quasifold groupoid, equipped with quasifold atlas $\cl{A}$, there is an isomorphism $\cl{F}:\fk{G} \cong \Gamma^{G/G_0}(\ol{\cl{A}})$, where $\ol{\cl{A}} = \{\ol{F_i}:\ol{U_i} \to \Gamma_i/V_i \mid F \in \cl{A}\}$ is a quasifold atlas of $G/G_0$. This isomorphism is identity on the base, and descends to the identity on the quotient spaces.
\end{proposition}

\begin{proof}
Given an arrow $\fk{g}$ from $x \in V_i$ to $y \in V_j$ in $\fk{G}$, take the corresponding arrow $g:F_i^{-1}(x) \to F_j^{-1}(y)$ in $G$. The effect of $g$ is an element of $\Gamma^G$ by Lemma \ref{lem:9}, so we get $\ti \phi \in \diffloc^G(G_0)$ such that $\Eff(g) = \germ_{F_i^{-1}(x)}\ti \phi$. Necessarily $\ti \phi(F_i^{-1}(x)) = F_j^{-1}(y)$. Perhaps shrinking its domain, assume $\ti \phi:W_i \to W_j$, where $W_i \subseteq V_i$ and $W_j \subseteq V_j$. Because we assume $G$ is effective, $\ti \phi$ is uniquely determined up to germ by $g$, hence by $\fk{g}$.

Following the arrows in the commutative diagram below, we see that $\phi:= F_j\ti \phi F_i^{-1} \in \diffloc^X(V_i, V_j)$.
\begin{equation*}
  \begin{tikzcd}
    F_i(W_i) \ar[r, "F_i^{-1}"] \ar[d, "\pi_i"']  \ar[dr, "\Psi_i"] & W_i \ar[r, "\ti \phi"] \ar[d, "\pi"']& W_j \ar[r, "F_j"] \ar[d, "\pi"] & F_j(W_j) \ar[d, "\pi_j"] \ar[dl, "\Psi_j"'] \\
    \pi_i(F_i(W_i)) \ar[r, "(\ol{F_i})^{-1}"'] & \pi(W_i) \ar[r, equals] & \pi(W_j) & \pi_j(F_j(W_j)) \ar[l, "(\ol{F_j})^{-1}"]
  \end{tikzcd}
\end{equation*}
Note the centre square commutes because $\ti \phi$ preserves $G$-orbits by assumption. Therefore, we define $\cl{F}$ to be identity on the base, and on arrows
\begin{equation*}
  \cl{F}(\fk{g}) := \germ_x\phi.
\end{equation*}

  By construction, for $\fk{g}$ from $x \in V_i$ to $y \in V_j$,
  \begin{align*}
    s(\cl{F}(\fk{g})) &= s(\germ_x \phi) = x \\ t(\cl{F}(\fk{g})) &= t(\germ_x\phi) = \phi(x) = F_j \ti \phi F_i^{-1}(x) = F_j F_j^{-1}(y) = y.
  \end{align*}
  So $\cl F$ commutes with the source and target. In a diagram, the first equality reads
  \begin{equation*}
    \begin{tikzcd}
      \fk{G} \ar[r, "\cl F"] \ar[dr, "s"] & \Gamma(\cl{A}) \ar[d, "s"] \\
      & \bigsqcup V_i
    \end{tikzcd}
  \end{equation*}
  and because the downward arrows are local diffeomorphisms, so is $\cl{F}$.  To check $\cl{F}$ is a homomorphism, first write (for $\fk{g}$ from $x \in V_i$ to $y \in V_j$ and $\fk{g}'$ from $y \in V_j$ to $z \in V_k$),
  \begin{equation*}
    \cl F(\fk{g}') \cl F(\fk{g}) = \germ_y F_k \ti \phi' F_j^{-1} \germ_x F_j \ti \phi F_i^{-1} = \germ_x F_k \ti \phi' \ti \phi F_i^{-1}.
  \end{equation*}
  This equals $\cl F(\fk{g}' \fk{g})$ if $\germ_x \ti \phi' \ti \phi = \Eff(g'g)$, which is true because $\Eff$ is a functor:
  \begin{equation*}
    \Eff(g'g) = \Eff(g') \Eff(g) = \germ_{F_j^{-1}(y)}\ti \phi' \germ_{F_i^{-1}(x)}\ti \phi = \germ_{F_i^{-1}(x)} \ti \phi' \ti \phi.
  \end{equation*}

  Finally, we give an inverse of $\cl{F}$. For $\germ_x\phi$ with $\phi \in \diffloc^X(V_i, V_j)$, we have $\ti \phi := F_j^{-1}\phi F_i \in \diffloc^G(G_0)$, and so $\germ_{F_i^{-1}(x)} \ti \phi \in \Gamma^G$. By Lemma \ref{lem:9}, we have $\Gamma^G = \Eff(G) \cong G$, so there is a unique\footnote{Here is where the effective assumption is necessary. If we do not assume $G$ is effective, the $g$ here will exist but may not be unique, and so our inverse would not be well-defined.} $g:F_i^{-1}(x) \to F_j^{-1}(y)$ in $G$ such that $\Eff(g) = \germ_{F_i^{-1}(x)} \ti \phi$. Set $\fk{g}$ to be the arrow from $x \in V_i$ to $y \in V_j$ corresponding to $g$. Then the map $\germ_x\phi \mapsto \fk{g}$, together with the identity on the base, is the desired inverse of $\cl{F}$.
\end{proof}

The main result of this section is now a corollary.

\begin{corollary}
\label{cor:4}
  If $G, H$ are effective quasifold Lie groupoids and $f:G/G_0 \to H/H_0$ is a local diffeomorphism, there is a locally invertible bibundle $P:G \to H$ such that $\ol{P} = f$. Moreover, if $f$ is a diffeomorphism, this bibundle is invertible.
\end{corollary}

\begin{proof}
  Fix countable atlas $\cl{A} = \{F_i:G|_{U_i} \to \Gamma_i \ltimes V_i\}$ of $G$. Then $\ol{\cl{A}} = \{\ol{F_i}:U_i/G \to V_i/\Gamma_i\}$ is a countable atlas of $G/G_0$ making it a diffeological quasifold, by Proposition \ref{prop:2}. The corresponding diffeomorphism $\ol{\Phi}:\bigsqcup V_i/\Gamma^{G_0/G}(\ol{\cl{A}}) \to G/G_0$ from Lemma \ref{lem:6} coincides with the diffeomorphism induced from the invertible bibundle $\left \langle \bigsqcup F_i^{-1} \right \rangle :\fk{G} \to G$, by the uniqueness clause from Proposition \ref{prop:1}. Here we are using Proposition \ref{prop:11} to identify $\fk{G} \cong \Gamma^{G_0/G}(\ol{\cl{A}})$.

  We similarly equip $H_0/H$ with an atlas $\ol{\cl{A}'}$.  By Proposition \ref{prop:10} applied to $f:G_0/G \to H_0/H$, we can choose a locally invertible bibundle $Q:\Gamma^{G_0/G}(\ol{\cl{A}}) \to \Gamma^{H_0/H}(\ol{\cl{A}'})$ such that $f = \ol{\Phi} \circ \ol{Q} \circ (\ol{\Phi})^{-1}$. On the other hand, the bibundle $P := \left(\left \langle \bigsqcup (F_j')^{-1} \right \rangle \circ Q \right) \circ \left \langle \bigsqcup F_i^{-1}\right \rangle^{-1}$ taking $G$ to $H$ induces precisely the map $\ol{\Phi'} \circ \ol{Q} \circ (\ol{\Phi})^{-1}$ by Proposition \ref{prop:1}, which is $f$. If $f$ is a diffeomorphism, then $Q$, hence $P$, is invertible.
\end{proof}

\newpage
\smallskip\noindent\textbf{The germ diffeology vs the usual smooth structure}

\section{The Germ Functional Diffeology}
\label{sec:germ-funct-diff}

For a pseudogroup $P$, there is also a canonical diffeology on $\Gamma(P)$, which is used in \cite{IZL2018} and \cite{IZP2020}. We define it in the category of diffeological spaces. On $C_{\text{loc}}^\infty(X, X')$, we may define a diffeology consisting of those parametrizations $\cl{O} \ni r \mapsto f_r$ such that
\begin{equation*}
  \{(r, x) \mid x \in \dom(f_r)\} \to X', \quad (r, x) \mapsto f_r(x)
\end{equation*}
is smooth. In particular, we require its domain to be an open subset of $\cl{O} \times X$. This is the \emph{functional diffeology} on $C_{\text{loc}}^\infty(X, X')$. 

We equip $P$ with the \emph{pseudogroup functional diffeology} consisting of parametrizations $p$ such that both $p$ and $p^{-1}$ are plots in the functional diffeology on $C_{\text{loc}}^\infty(X, X)$. Set $G:= \{(\phi, x) \mid \phi \in P, \ x \in \dom \phi\}$, and equip it with the subset diffeology induced by its inclusion in $P \times X$. Then we have the map
\begin{equation*}
  \germ: G \to \Gamma(P), \quad (\phi, x) \mapsto \germ_x\phi.
\end{equation*}
The canonical diffeology $\sr D_{\text{can}}$ on $\Gamma(P)$ is the pushforward by $\germ$ of the diffeology on $G$. We call $\sr D_{\text{can}}$ the \emph{germ functional diffeology} on $\Gamma(P)$.

  The manifold structure $\sr{A}$ on $\Gamma(P)$ introduced in Remark \ref{prop:3} generates a diffeology $\sr D_{\sr{A}}$, given by the smooth maps from Euclidean spaces into $\Gamma(P)$. Given $\Phi = (x \mapsto \germ_x\phi) \in \sr{A}$, lifting it to $x \mapsto (\phi, x)$ shows that $\Phi$ is a plot of the functional diffeology $\sr D_{\text{can}}$. So $\sr D_{\cl A} \subseteq \sr D_{\text{can}}$. However, the converse fails in general.

\begin{example}
  \label{ex:10}
  Take $M = \R^2$ and $P = \diffloc(\R^2)$. The groupoid $\Gamma(P)$ is an example of the \emph{Haefliger groupoid}. Consider the parametrization $p$ given by $(-\pi, \pi) \ni r \mapsto \germ_{\bm{0}} p_r$, where $p_r$ is counterclockwise rotation by $r$ radians. This lifts to the map
  \begin{equation*}
    (-\pi, \pi) \to G, \quad r \mapsto (p_r, \bm 0),
  \end{equation*}
  and the maps $r \mapsto \bm 0$ and $r \mapsto p_r$ are smooth (the latter in the pseudogroup functional diffeology on $P$). Therefore $r \mapsto \germ_{\bm 0}p_r$ is a plot of $\sr D_{\text{can}}$.

  On the other hand, consider the chart $\Phi \in \sr{A}$ induced by $p_0 = \id:M \to M$. Observe that
  \begin{align*}
    p^{-1}\Phi(M) &= \{r \in (-\pi, \pi) \mid \exists x \in M \text{ s.t. } \germ_{\bm 0} p_r = \germ_x\id\} \\
                  &= \{r \in (-\pi ,\pi) \mid \germ_{\bm 0}p_r = \germ_{\bm 0} \id\} \\
    &= \{0\}.
  \end{align*}
  This is not open, so the map $p$ cannot be a smooth map into $\Gamma(P)$ with manifold structure given by $\sr{A}$.
  \eoe
\end{example}

\newpage
\smallskip\noindent\textbf{Direct proof something is not a quasifold groupoid}

 For a contradiction assume $\Z \ltimes \R$ is a quasifold groupoid. Then its quotient space $X := \R / \Z$ is a diffeological quasifold. Define
  \begin{equation*}
    f:\R \to \R, \quad f(x) :=
    \begin{cases}
      \Psi^1(x) &\quad x \geq 0 \\
      \Psi^{-1}(x) &\quad x \leq 0.
    \end{cases}
  \end{equation*}
  Because $h$ is flat at $0$ and positive elsewhere, $f$ is smooth. For the same reason, $f^{-1}$ is smooth, so $f$ is diffeomorphism. It preserves $\Z$-orbits, so induces the identity on the quotient $X$. It is also $\Z$-equivariant, because $\Psi^1(x)$ has the same sign as $x$, and therefore induces functor $\Z \ltimes \R$ to itself, which we also denote by $f$.

  Because the functor $f$ induces the identity on $X \to X$, and so does the functor $\id_G$, by Theorem \ref{thm:1} (really Proposition \ref{prop:6}), there is a smooth natural transformation $\alpha: \id_G \implies f$. The requirements that
  \begin{equation*}
    s(\alpha(x)) = \id_G(x) = x, \quad t(\alpha(x)) = f(x)
  \end{equation*}
  imply that $\alpha(x) = (k_x,x)$, where $k_x \cdot x = f(x)$. Because the $\Z$ action is free outside of $0$, we see that
  \begin{equation*}
    k_x =
    \begin{cases}
      1 &\text{if } x > 0\\
      -1 &\text{if } x < 0.
    \end{cases}
  \end{equation*}
  Notice that $x \mapsto k_x$ cannot be smooth at $x=0$, thus $\alpha$ is not smooth at $0$, which is a contradiction.
\newpage
\smallskip\noindent\textbf{Proof of Lemma \ref{prop:3}}
    For simplicity, assume $M = \R^n$. First, we show $\{\germ \phi :x \mapsto \germ_x\phi \mid \phi \in \Psi\}$ is an atlas of charts for $\Gamma(\Psi)$. It covers $\Gamma(\Psi)$ by definition. Now take $\germ \phi$ and $\germ \phi'$. Denoting the domains of $\phi$ and $\phi'$ by $U$ and $U'$, we have
  \begin{align*}
    ((\germ \phi')^{-1} \circ \germ \phi) (U) &= \{x \in U' \mid \exists y \in U \text{ s.t. }\germ_x\phi'=  \germ_y\phi \} \\
                         &= \{x \in U'\mid x \in U \text{ and }\germ_x\phi' = \germ_x \phi\}
  \end{align*}
  This is open by the definition of germ, and we can now see that the composition $(\germ \phi')^{-1} \circ \germ \phi$ is the identity, hence a smooth map, between open sets. Therefore $\{\germ \phi\}$ is indeed an atlas. With the smooth structure it determines, $\Gamma(\Psi)$ is a not-necessarily Hausdorff nor second-countable manifold of dimension equal to $\dim M$.

  About each $\germ_x\phi$, consider the chart $\germ \phi$. The coordinate representation of the source, $s \circ \germ \phi$, maps $U \to U$ and is the identity. Therefore $s$ is smooth, and through each $\germ_x\phi$ it has a smooth section $\germ \phi$. This means $s$, hence $t$, are submersions.

  Fix a pair of composable arrows $(\germ_{x'}\phi', \germ_x\phi)$. The map
  \begin{equation*}
    (\germ \phi' \circ \phi, \germ \phi): U \cap \phi^{-1}(U) \to \Gamma(P) \times \Gamma(P)
  \end{equation*}
  is a chart of $\Gamma(\Psi) \times \Gamma(\Psi)$ through our fixed pair of arrows. Its image lies in the embedded submanifold of composable arrows $\Gamma(\Psi) \tensor[_s]{\times}{_t} \Gamma(\Psi)$, so it is also a chart of this submanifold. In this chart, multiplication is given by $y \mapsto \germ_y\phi' \phi$, which is (a restriction of) a chart for $\Gamma(\Psi)$. Therefore multiplication is smooth.

  Fix an arrow $\germ_x\phi$. In the chart $\germ \phi$, the inversion is given by $y \mapsto \germ_{\phi(y)}\phi^{-1}$. In other words it is the map $\germ (\phi^{-1}) \circ \phi$. This is smooth into $\Gamma(\Psi)$, hence inversion is smooth.

  In the general case, where $M \neq \R^n$, use the atlas $\{r \mapsto \germ_{x_r}\phi \mid \phi \in \Psi\}$, where the maps $r \mapsto x_r$ range over the charts of $M$.

  \newpage
  
  \smallskip\noindent\textbf{On restirctions}

  Let $\iota:U \to G_0$ be the inclusion. This induces a functor $\iota:G|_U \to G$, which is the inclusion on objects and arrows. This, in turn, induces a bibundle $\langle \iota \rangle:G|_U \to G$.

  \begin{lemma}
    The restriction $P|_U$ is isomorphic to $P\circ \langle \iota \rangle$,
  \end{lemma}

  \begin{proof}
    Recall that $\langle \iota \rangle$ is $\iota_0^*(G \xar{t} G_0)$. This is
    \begin{itemize}
    \item the manifold $U \fiber{\iota}{t} G$
    \item the right anchor $s \circ \pr_2$, and the right action $(x,g) \cdot g' = (x, gg')$
      \item the left anchor $\pr_1$, and the left action $g' \cdot (x,g) = (t(g'), g'g)$,
    \end{itemize}
    We can identify this with:
    \begin{itemize}
    \item the manifold $t^{-1}(U)$
    \item the right anchor $s$, and the right action $g \cdot g' = gg'$
      \item the left anchor $t$, and the left action $g' \cdot g = g'g$.
      \end{itemize}
      Note that $G|_U$ is acting on the left, so indeed $t(g'\cdot g) = t(g') \in U$ and $g'g \in t^{-1}(U)$.

      So then we identify $P \circ \langle \iota \rangle$ with $(t^{-1}(U) \fiber{s}{a} P)/G$. The $G$ action is along the anchor $s \circ \pr_1$, and is given by
      \begin{equation*}
        (g,p)\cdot g' = (g \cdot g', (g')^{-1} \cdot p).
      \end{equation*}

      The identification is given by
      \begin{equation*}
        P|_U \to (t^{-1}(U) \fiber{s}{a} P)/G, \quad p \mapsto [1_{a(p)} , p]
      \end{equation*}
  \end{proof}

\end{document}